\newcommand{\cmark}{\ding{51}}%
\newcommand{\xmark}{\ding{55}}%
\newtheorem{assumption}{Assumption}
\newtheorem{theorem}{Theorem}
\newtheorem{lemma}{Lemma}
\newtheorem{corollary}{Corollary}
\title{Decentralized Relaxed Smooth Optimization with Gradient Descent Methods}
\author{%
  Zhanhong Jiang\thanks{Corresponding author.}, Aditya Balu, Soumik Sarkar \\
  Translational AI Center\\
  Iowa State University\\
  Ames, IA 50011 \\
  \texttt{zhjiang@iastate.edu, baditya@iastate.edu, soumiks@iastate.edu} \\
  % examples of more authors
  % \And
  % Coauthor \\
  % Affiliation \\
  % Address \\
  % \texttt{email} \\
  % \AND
  % Coauthor \\
  % Affiliation \\
  % Address \\
  % \texttt{email} \\
  % \And
  % Coauthor \\
  % Affiliation \\
  % Address \\
  % \texttt{email} \\
  % \And
  % Coauthor \\
  % Affiliation \\
  % Address \\
  % \texttt{email} \\
}
\begin{document}

\maketitle

\begin{abstract}
    $L_0$-smoothness, which has been pivotal to advancing decentralized optimization theory, is often fairly restrictive for modern tasks like deep learning. The recent advent of relaxed $(L_0,L_1)$-smoothness condition enables improved convergence rates for gradient methods. Despite centralized advances, its decentralized extension remains unexplored and challenging.
    In this work, we propose the first general framework for decentralized gradient descent (DGD) under $(L_0,L_1)$-smoothness by introducing novel analysis techniques. For deterministic settings, our method with adaptive clipping achieves the best-known convergence rates for convex/nonconvex functions without prior knowledge of $L_0$ and $L_1$ and bounded gradient assumption. In stochastic settings, we derive complexity bounds and identify conditions for improved complexity bound in convex optimization. The empirical validation with real datasets demonstrates gradient-norm-dependent smoothness, bridging theory and practice for $(L_0,L_1)$-decentralized optimization algorithms.
    % This study facilitates theoretical underpinnings of $(L_0,L_1)$-smoothness in DGD and provides constructive insights for designing new algorithms with gradient clipping.
\end{abstract}

\section{Introduction}\label{intro}
In this paper, we focus primarily on the unconstrained decentralized optimization on a networked system involving $N$ agents,
\begin{equation}\label{eq_1}
    F^*:=\text{min}_{x\in\mathbb{R}^d}\frac{1}{N}\sum_{i=1}^Nf^i(x),
\end{equation}
where $f^i:\mathbb{R}^d\to\mathbb{R}$ is a local smooth function associated with agent $i$. Due to the emergence of deep learning and the demand of data privacy, the assurance of efficient convergence in decentralized optimization has remained increasingly challenging yet critical. Existing methods, such as the decentralized gradient descent (DGD) and its variants, are typically dependent on $L_0$ Lipschitz-smoothness to guarantee convergence rates. However, in many modern machine learning problems, this assumption is overly restrictive or may not necessarily hold, particularly when leveraging deep learning models for parameterization. To relax such an assumption, a recent work~\cite{zhang2019gradient} empirically showed that \textit{the norm of the Hessian evidently correlates with a norm of the gradient of the loss functions} when training neural network models. This evidence motivates the authors to relax the traditional $L_0$-smoothness by introducing a new and more realistic assumption on a function class, i.e., $(L_0,L_1)$-smoothness. Immediately, the class of $(L_0,L_1)$-smooth functions comprises the class of $L_0$-smooth functions when $L_1=0$. In addition to that, they also established the convergence rate for various gradient descent methods for nonconvex functions, claiming that the normalized and clipping methods are favorable for the new class of smooth functions. Upon this work,
recent years have witnessed numerous works designing optimization methods and analyzing convergence to solve the $(L_0,L_1)$-smooth optimization problems~\cite{vankov2024optimizing,gorbunov2024methods,li2024convex,wang2023convergence,reisizadeh2023variance}. Regardless of the prosperous progress in centralized methods, to the best of our knowledge, there are no reported results on their \textit{decentralized} counterparts with this new smoothness condition. However, spurred by edge computing and multiagent systems~\cite{yang2009decentralized,terelius2011decentralized,cao2020decentralized}, decentralized optimization or learning has attracted particularly considerable attention in a variety of areas.
Additionally, the existing proof techniques for decentralized optimization cannot directly be applied to solve the $(L_0,L_1)$-smooth optimization problems, posing difficulties in understanding convergence behaviors of decentralized optimization methods with $(L_0,L_1)$-smoothness.
\begin{table}
\centering
\begin{threeparttable}
\caption{Comparison between our results and existing results.}
   \begin{tabular}{c c c c c}
    \hline
    Ref. & Smo. & Sto. & Conv. & Complexity\\
    \hline
    % \multirow{2}{*}{\cite{vankov2024optimizing}} & \xmark & $(L_0,L_1)$ & \xmark & \cmark & \small{$\mathcal{O}(\frac{L_0R^2}{\epsilon}+L_1R\textnormal{ln}(\frac{F_0}{\epsilon}))$}\\ &\xmark & $(L_0,L_1)$ & \xmark & \xmark & \small{$\mathcal{O}(\frac{L_0F_0}{\epsilon^2}+\frac{L_1F_0}{\epsilon})$}\\ \hline
    \cite{yuan2016convergence} &  $(L_0,0)$ & \xmark & \cmark & \small{$\mathcal{O}(\frac{1}{\alpha\epsilon})^\dagger$}\\
    \cite{zeng2018nonconvex} &  $(L_0,0)$ & \xmark & \cmark &\small{$\tilde{\mathcal{O}}(\frac{1}{\epsilon^2})^\ddagger$}\\ \hline 
    \textbf{Theorem~\ref{theorem_1}} & $(L_0,L_1)$ & \xmark & \cmark & \small{$\mathcal{O}(\textnormal{max}\{L_1R\;\text{ln}(\frac{F_0}{\epsilon}),\frac{L_0R^2}{\epsilon}\})$}\\
    \textbf{Theorem~\ref{theorem_2}}  & $(L_0,L_1)$ & \xmark & \xmark & \small{$\mathcal{O}(\textnormal{max}\{\frac{L_0F_0}{\epsilon},\frac{L_1F_0}{\epsilon}\})$}\\ \hline
    \multirow{2}{*}{\cite{koloskova2020unified}} &  $(L_0,0)$ & \cmark & \cmark & \small{$\mathcal{O}(\frac{\bar{\sigma}^2}{N\epsilon^2}+\frac{L}{\epsilon}+\frac{\sqrt{L}(\bar{\delta}+\bar{\sigma})}{\epsilon^{3/2}})^\#$}\\  & $(L_0,0)$ &\cmark & \xmark & \small{$\mathcal{O}(\frac{L\hat{\sigma}^2}{N\epsilon^2}+\frac{L}{\epsilon}+\frac{\sqrt{L}(\hat{\delta}+\hat{\sigma})}{\epsilon^{3/2}})^\#$}\\ \hline
    \cite{yuan2022revisiting}  & $(L_0,0)$ & \cmark & \xmark & \small{$\mathcal{\tilde{O}}(\textnormal{max}(\frac{\sigma^2}{n\epsilon^2},\frac{1}{\epsilon\sqrt{1-\rho^2}}))$}\\
    % \cite{li2025convergence} &\cmark & $(L_0,0)$ & \cmark & \xmark & \\
    \hline
    \textbf{Theorem~\ref{theorem_3}} & $(L_0,L_1)$ & \cmark & \cmark & \small{$\begin{aligned}
        \mathcal{O}(\textnormal{max}\{\frac{NR^2+\sigma^2}{N\epsilon}+\frac{(L_1^2+L_1)(\sigma^2+\delta^2)}{(1-\sqrt{\rho})^2\epsilon}, \\ \frac{N^2R^4+\sigma^4}{N^2\epsilon^2}+\frac{L_0^{4/3}(\sigma^2+\delta^2)^{2/3}}{(1-\sqrt{\rho})^{4/3}\epsilon^{2/3}}+\frac{L_0(\sigma^2+\delta^2)}{(1-\sqrt{\rho})^2\epsilon}\})
    \end{aligned}$}\\
    \textbf{Theorem~\ref{theorem_4}} & $(L_0,L_1)$ & \cmark & \xmark & \small{$\mathcal{O}(\textnormal{max}\{\frac{C}{\epsilon^4}, \frac{L_1\sigma}{\epsilon^2},\frac{L_1}{\epsilon}\})^\|$}\\
    \hline
    \end{tabular}
    \begin{tablenotes}
	\small {
	\item Dec.: decentralized; Smo.: smooth; Sto: stochastic, Conv.: convex; $\epsilon$: desired accuracy; $\dagger$: only convergence to $F^*+\mathcal{O}(\alpha)$; $\ddagger$: request of a diminishing step size and bounded gradient assumption to converge to $F^*$; and they don't show explicit rate for nonconvex case; $R$: $\|\bar{x}_0-x^*\|$; $F_0$: $F(\bar{x}_0)-F^*$; \#: $\bar{\sigma}, \bar{\delta}, \hat{\sigma}, \hat{\delta}$ are due to bounded noise in Assumption 3 in~\cite{koloskova2020unified}, and we use $\tau=1$ from their work; $\sigma, \delta$ are defined in Assumption~\ref{assum_3}; $0<\rho<1$; $C$ is the constant defined in Theorem~\ref{theorem_4}; $\|$: to unify the analysis for both deterministic and stochastic, for nonconvex functions, we use the metric of $\textnormal{min}_{1\leq k\leq K}\|\nabla F(\bar{x}_k)\|\leq \epsilon$, instead of $\frac{1}{K}\sum_{k=1}^K\|\nabla F(\bar{x}_k)\|^2\leq\epsilon$ in~\cite{koloskova2020unified,yuan2022revisiting}, where $\|\cdot\|$ is Euclidean norm.
        }
    \end{tablenotes}
\end{threeparttable}
\label{tab:comparison}
\end{table}

\textbf{Contributions.} Motivated by this gap, the present work probes decentralized $(L_0,L_1)$-smooth optimization with gradient descent methods for both generally convex and nonconvex functions. We summarize the main contributions as follows and the comparison between our and existing results in Table~\ref{tab:comparison}.
    1) In deterministic setting, we develop $(L_0,L_1)$-DGD with a \textit{clipping} step size. For generally convex problems, it achieves $\mathcal{O}(\textnormal{max}\{L_1R\;\text{ln}(F_0/\epsilon),L_0R^2/\epsilon\})$ complexity bound for finding an $\epsilon$-stationary point. For nonconvex functions, our gradient methods achieve $\mathcal{O}(\textnormal{max}\{L_0F_0/\epsilon^2,L_1F_0/\epsilon\})$. $F_0=F(\bar{x}_0)-F^*$ is the initialization error and $R=\|\bar{x}_0-x^*\|$ is the initial instance to a solution, where $\bar{x}_0$ is the average of all $x^i$ at the initial step. $N$ is the number of agents. They resembles the best-known bounds in the centralized counterparts (see Theorem~\ref{theorem_1} for convex and Theorem~\ref{theorem_2} for nonconvex).
    % \item We establish formally the convergence rates of DGD for generally convex and nonconvex functions with $(L_0,L_1)$-smoothness, presenting a new proof technique for mitigating the proof obstacles due to the exponential term in the new smooth condition.
    % \item We extend $(L_0,L_1)$-smoothness assumption to stochastic settings and also show the convergence analysis, which resembles the best available rate as in existing works, while relaxing the traditional smoothness.
    2) To facilitate the theoretical underpinnings in the stochastic setting, we also investigate complexity bounds for decentralized stochastic gradient descent (DSGD) with $(L_0,L_1)$-smoothness, which exhibit to match the best available bounds achieved for both convex and nonconvex functions. Particularly, for convex functions, as long as $\|\nabla f^i(x)\|\geq L_0/L_1$ ($\|\nabla f^i(x)\|$ is the gradient norm), the algorithm has improved complexity bound, and when $\|\nabla f^i(x)\|< L_0/L_1$ is satisfied, $(L_0,L_1)$-DSGD has standard complexity (see Theorem~\ref{theorem_3} for convex and Theorem~\ref{theorem_4} for nonconvex).

\noindent\textbf{Significance.} In our analysis, it should be noted that the step sizes are correlated with the constants $L_0$ and $L_1$, raising concerns in designing $(L_0,L_1)$-DGD or DSGD for real-world tasks, as estimating such constants in practice remains a longstanding challenge in the optimization community. This is indeed a shared limitation with many theoretically grounded first-order methods, including those assuming standard $L_0$-smoothness. While our work adopts this assumption for theoretical tractability, we fully acknowledge this limitation. We also highlight recent works~\cite{vankov2024optimizing,gorbunov2024methods} in centralized settings that aim to circumvent this need by developing normalized or adaptive methods. This motivates us to extend such methods to decentralized settings under relaxed smoothness, which is an important next step and left as a future work. Our contribution lies in being the \textit{first to rigorously characterize convergence under decentralized relaxed smoothness}, an area previously unexplored to the best of our knowledge. While this may seem like a natural extension, our analysis required careful handling of both clipping dynamics and consensus errors in the decentralized setting, which are nontrivial challenges that are not addressed in existing literature.
\section{Related Work}
% In this section, we review existing related works and start with the recently popular $(L_0,L1)$-smooth optimization in the sequel.

\textbf{$(L_0,L_1)$-smooth optimization.} $L_0$-smoothness~\cite{hu2023beyond,xie2024trust,erickson1981lipschitz} has played a central role in providing convergence guarantee for optimization algorithms. Particularly, in massive deep learning tasks primarily with nonconvex optimization, it ensures a critical descent loss descent property. Despite its significant successes and widespread use, this assumption can be overly restrictive, since $L_0$ must capture the worst-case smoothness. To address this issue, a recent work~\cite{zhang2019gradient} empirically showed a phenomenon that the local smoothness constant decreases during training alongside the gradient norm, contradicting the intuition that smoothness constant remains the same. Thus, $L_0$-smoothness has been relaxed to be $(L_0,L_1)$-smoothness in recent studies~\cite{zhang2020improved,koloskova2023revisiting,vankov2024optimizing,gorbunov2024methods,lobanov2024linear, tyurin2024toward, crawshaw2022robustness,khirirat2024error}. With this new smoothness assumption, systematic studies have recently been conducted primarily on convex problems with existing gradient descent methods. In~\cite{vankov2024optimizing}, the authors claimed a significantly better complexity bound for convex functions in deterministic settings, also studying the normalized gradient method and gradient method with Polyak step size. They found that neither of these methods require explicit knowledge of $L_0$ and $L_1$. However, this work fails to extend the analysis to stochastic settings to establish rigorous bounds. Concurrently, another work~\cite{gorbunov2024methods} presented the similar analysis independently to derive the tighter rates for gradient descent with smoothed clipping and extended $(L_0,L_1)$-smoothness to stochastic settings for further advancing the convergence guarantee. Another recent work~\cite{lobanov2024linear} also focused only on convex setup and claimed the linear convergence rate for $(L_0,L_1)$-gradient descent, when the gradient norm satisfied a certain condition. Likewise, this conclusion applies to normalized/clipped gradient descent and even random coordinate descent. However, these two works show no results in nonconvex problems, similar to~\cite{lobanov2025power,chezhegov2025convergence}. Other works such as~\cite{koloskova2023revisiting,takezawa2024polyak,li2023convex} formally obtained complexity bounds for gradient descent methods for both convex and nonconvex cases, but relying on additional constants that can be extremely larger than $L_0$ and $L_1$. A generalized smoothness assumption was also proposed in~\cite{tovmasyan2025revisiting} for stochastic proximal point methods with strongly-convex functions, extending both $L_0$- and $(L_0,L_1)$-smoothness. A more recent work examined the new convergence theory for adaptive gradient methods with the relaxed smoothness assumption~\cite{crawshaw2025complexity}.
To the best of our knowledge, no prior work offers comprehensive theoretical studies of $(L_0,L_1)$-smoothness in fully decentralized settings with rigorous convergence guarantees, posing a theoretical gap.

\textbf{Decentralized $L_0$-smooth optimization.} Decentralized optimization with $L_0$-smoothness~\cite{kovalev2020optimal,hendrikx2019accelerated,yuan2022revisiting,bhuyan2024communication} has become the standard theoretical analysis framework in edge computing and multiagent systems. Particularly, driven by decentralized deep learning, $L_0$-smoothness is an indispensable requirement for providing convergence guarantee. In~\cite{yuan2022revisiting}, the authors established an optimal rate for $L_0$-smooth and nonconvex functions, improving the results shown in~\cite{lu2021optimal}. Another work~\cite{koloskova2020unified} unified the theory for decentralized SGD, consolidating proof techniques for different function classes, including strongly convex, convex and nonconvex. To further tackle the communication bottleneck, compression and gradient tracking methods were leveraged in~\cite{zhao2022beer} to improve the convergence rate in~\cite{koloskova2019decentralized} from $\mathcal{O}(1/K^{2/3})$ to $\mathcal{O}(1/K)$, where $K$ is the number of iterations. A more recent work~\cite{li2025convergence} investigated the trade-offs between convergence rate, compression ratio, network connectivity, and privacy in decentralized nonconvex optimization with differential privacy.
Other works in this line of research have focused on communication efficiency~\cite{lu2020moniqua}, impact of topology~\cite{zhu2022topology}, asynchronous quantized updates~\cite{nadiradze2021asynchronous}, precise gradient computation~\cite{esfandiari2021cross}, and large-batch training~\cite{yuan2021decentlam}. More recently, decentralized learning is also applied to model merging~\cite{saadati2024dimat} and low-rank adaptation in large language models~\cite{ghiasvand2025decentralized}, highlighting its effectiveness in training large models. Despite the significance of decentralized learning/optimization, all of these works resort to $L_0$-smoothness assumption as a key requirement to ensure convergence guarantee. Additionally, as the theoretical analysis in aforementioned works is primarily obtained for deep learning models, there is a departure from practice. Since in many of these models, the adoption of activation functions (e.g., ReLU) empirically violates the $L_0$-smoothness assumption, demanding urgently a natural relaxation of it. 

\noindent\textbf{Federated $(L_0,L_1)$-smooth optimization.} Another prominent distributed learning paradigm is federated learning (FL), which has recently seen significant advances in convergence analysis under relaxed smoothness assumptions. In particular, Demidovich et al.~\cite{demidovich2024methods} have recently proposed and analyzed new methods for $(L_0,L_1)$ non-convex federated optimization, demonstrating consistency with established results for standard smooth problems. This smoothness framework was further extended to stochastic normalized error feedback algorithms to enhance communication efficiency in FL settings~\cite{khirirat2024communication}. Another study~\cite{li2024improved} examined the effects of per-sample and per-update clipping on the convergence of FedAvg under relaxed smoothness, offering a refined theoretical analysis. Motivated by the rise of large language models (LLMs), the authors of~\cite{andrei2024resilient} developed resilient federated LLMs over wireless networks, incorporating $(L_0,L_1)$-coordinate-wise smoothness into their analysis. Regardless of these developments, to the best of our knowledge, no results or theoretical guarantees have yet been reported for fully decentralized settings under relaxed smoothness.

\section{Preliminaries and Problem Formulation}\label{prelim}
% In this section, we present preliminary knowledge that characterizes the analysis, including basic graph theory and relevant assumptions. 
% We start with the description for the network topology associated with Eq.~\ref{eq_1}. Throughout this paper, we resort to the standard inner product $\langle\cdot,\cdot\rangle$ and the standard Euclidean norm $\|\cdot\|$ for vectors, and the standard spectral norm $\|\cdot\|_2$ and Frobenious norm $\|\cdot\|_\mathcal{F}$ for any matrix $\mathbf{Z}$.

\textbf{Graph Theory.}
Consider a network involving $N$ agents and denote by $\mathcal{G}=(\mathcal{V}, \mathcal{E})$ the connected topology, where $\mathcal{V} = \{1,2,...,N\}$ and $\mathcal{E}\subseteq \mathcal{V}\times \mathcal{V}$. If $(i, j)\in\mathcal{E}$, then agent $i$ is able to communicate with agent $j$. We also define the neighborhood of agent $i$ as follows: $Nb(i):=\{j\in\mathcal{V}:(i,j)\in\mathcal{E}\;or\;j=i\}$. Without loss of generality, we assume the graph $\mathcal{G}$ is \textit{connected} and \textit{undirected}. The $N$ agents jointly solve the consensus optimization problem in Eq.~\ref{eq_1}, in which consensus averaging is critical to maintain closeness among agents that learn to
achieve the shared optimal model parameter $x^*:=\text{argmin}_{x\in\mathbb{R}^d}\frac{1}{N}\sum_{i=1}^Nf^i(x)$ (we assume that $x^*$ exists). To quantify the communication protocol, we denote by $\Pi$ the mixing matrix with each element $\pi_{ij}\in [0,1]$ signifying the probability of any pair of agents communicating with each other. 
We now state the first assumption for $\Pi$, which has been utilized frequently in existing works~\cite{esfandiari2021cross,yu2019linear}. 
\begin{assumption}\label{assumption_1}
   $\Pi\in\mathbb{R}^{N\times N}$ is a symmetric doubly stochastic matrix satisfying $\lambda_1(\Pi)=1$ and
$
       \textnormal{max}\{|\lambda_2(\Pi)|, |\lambda_N(\Pi)|\}\leq \sqrt{\rho}<1,
$
$\rho\in(0,1)$, $\lambda_l(\cdot)$ is the $l$-th largest eigenvalue of the matrix.
\end{assumption}

\textbf{$(L_0,L_1)$-smoothness.} We first present what a function needs to satisfy to be $(L_0,L_1)$-smooth.
\begin{assumption}~\cite{crawshaw2025complexity}\label{definition_1}
    A continuously differentiable function $f^i$ for all $i\in\mathcal{V}$ is assumed to be $(L_0,L_1)$-smooth if there exist constants $L_0>0$ and $L_1\geq 0$, then, for all $x,y\in\mathbb{R}^d$ with $\|y-x\|\leq \frac{1}{L_1}$,
    \begin{equation}\label{eq_3}
        \|\nabla f^i(y)-\nabla f^i(x)\|\leq (L_0+L_1\|\nabla f^i(x)\|)\|y-x\|.
    \end{equation}
\end{assumption}
Assumption~\ref{definition_1} is slightly different from the symmetric smoothness assumption (Assumption 1.3 in~\cite{gorbunov2024methods}), which is a more general formulation that bounds the gradient variation using the maximum gradient norm along the path between $x$ and $y$. In contrast, our work adopts an asymmetric version, which has also been used in recent works such as~\cite{crawshaw2025complexity} and~\cite{zhang2020improved}. This form avoids evaluating a supremum along the path and instead provides a one-sided control based on the local gradient norm at $y$ which simplifies decentralized analysis and recursive inequalities involving local updates.
While the symmetric form is strictly more general, the asymmetric assumption is sufficient for our theoretical development and better suited for decentralized optimization where local information is more readily available than global gradient path behavior.
If $f^i$ is twice continuously differentiable, then Eq.~\ref{eq_3} implies the following relationship
$
    \|\nabla^2f^i(x)\|\leq L_0+L_1\|\nabla f^i(x)\|, \forall x\in\mathbb{R}^d.
$
Different from most existing works~\cite{gorbunov2024methods,zhang2020improved,vankov2024optimizing} where they have not yet considered multiple agents, Assumption~\ref{definition_1} in our work is particularly for a local agent $i$. However, in the analysis, we will also need to establish the same smoothness condition for the sum $F(x):=\frac{1}{N}\sum_{i=1}^Nf^i(x)$. To achieve this, we rewrite it into $F(x):=\frac{1}{N}\sum_{i=1}^Nf^i(x^i)$, s.t., $x_1=...=x_N$, which has been generic in numerous works on decentralized optimization~\cite{berahas2018balancing}. Intuitively, this makes sense as each individual agent learns its own model locally based on the data only known to it, while the constraint plays a central role in averaging them through some communication protocol. Following from Proposition 2.4 in~\cite{vankov2024optimizing}, we can obtain that $F(x)$ is also $(L_0,L_1)$-smooth. One may notice that in Assumption~\ref{definition_1}, we have an additional condition $\|y-x\|\leq 1/L_1$, making this assumption slightly different from the one in~\cite{vankov2024optimizing}. Though it seems a bit strong in some sense, this has widely been adopted in~\cite{zhang2020improved,koloskova2023revisiting,crawshaw2025complexity}. In the later analysis, such a condition is also key to ensure the convergence guarantee, particularly mitigating the issue of exponential factor in the loss descent.

In numerous previous decentralized optimization works, bounded gradient assumption also appears to typically characterize the consensus error, even with the assumption that the initialization of $x^i$ is 0. In deterministic settings, though the authors in~\cite{yuan2016convergence,berahas2018balancing} did not formally impose bounded gradient assumption, they still derived it and applied to their analysis. This implies the significance of bounded gradient in DGD. However, in stochastic settings, it can be replaced by the bounded variance or second moment of stochastic gradients, which is a much weaker condition. In this work, we will show a different analysis technique for addressing the convergence in deterministic settings.
% In another work~\cite{zhang2020improved}, the authors used a slightly different version of $(L_0,L_1)$-smoothness such that it requires a condition $\|x-y\|\leq\frac{1}{L_1}$, which seems relatively stronger and may not necessarily hold true in various scenarios. During analysis, we will also shed light on why it is not suitable for the analysis. 
% Our study focuses primarily on the deterministic setting such that we follow similarly~\cite{zhang2019gradient} to have the following assumption for the norm of the gradient:
% \begin{assumption}\label{assumption_2} We have that $
%        G:=\textnormal{sup}\{\|\nabla f^i(x)\|^2\}<\infty.
%    $
% \end{assumption}
% This assumption is essentially the bounded gradient assumption, which has been relaxed in many decentralized stochastic optimization works with the standard $L_0$-smoothness condition. As it can be replaced by the bounded variance of stochastic gradient, which is a much weaker condition. However, in the deterministic setting, there is no such an assumption for relaxation. But when considering consensus, particularly with a simple but popular assumption that $x^i_0=0$, the squared norm of the gradient will make the analysis challenging. Alternatively, the second moment assumption can be used as it is also weaker than Assumption~\ref{assumption_2}, while the setting still needs to be stochastic. Thus, Assumption~\ref{assumption_2} is \textit{necessary}, which will be justified in the analysis. We now formulate the problem in what follows.

\section{Deterministic Setting}\label{deterministic}
We present $(L_0,L_1)$-DGD in Algorithm~\ref{alg:dgd}. DGD has been one of the most popular algorithms in decentralized optimization and can flexibly be extended to stochastic settings. However, the majority of the existing theoretical results depend highly on the traditional smoothness assumption. We have not yet been aware of any reported results using $(L_0,L_1)$-smoothness assumption. 
In Algorithm~\ref{alg:dgd}, the step size $\alpha_k$ is somewhat similar to the \textit{gradient clipping} which has particularly been popular in training large models. Notably,
% $\alpha_k$ can be rewritten as $\textnormal{min}\{\frac{1}{2L_0},\frac{1}{3\textnormal{max}\{\|\nabla f^i(x^i_k)\|,\|\nabla f^i(\bar{x}_k)\|\}}\}$, indicating that 
it is constrained by the larger gradient norm between $x^i_k$ and $\bar{x}_k$. This intuitively suggests that the local update for $x^i_k$ should not be too far away from the average $\bar{x}_k$. This condition may also be simplified when $k$ is relatively larger, as in the later phase of optimization, the distinction between $x^i_k$ and $\bar{x}_k$ becomes much smaller. Thereby, one can only use $\|\nabla f^i(x^i_k)\|$. 
Due to the page limit, we defer all technical proof in this section to Supplementary Materials (SM).

\begin{algorithm}
\caption{$(L_0,L_1)$-Decentralized Gradient Descent}\label{alg:dgd}
\begin{algorithmic}
\Require Mixing matrix $\Pi$, the number of epochs $K$, initialization $x^i_0, i\in\mathcal{V}$, $L_0, L_1$
\Ensure $x^i_K, i\in\mathcal{V}$
\For{from 0 to $K-1$}
    \For{each agent $i$}
        \State $\alpha_k=\textnormal{min}\{\frac{1}{2L_0},\frac{1}{3L_1\textnormal{max}_i\{\|\nabla f^i(x^i_k)\|\}}, \frac{1}{3L_1\textnormal{max}_i\{\|\nabla f^i(\bar{x}_k)\|\}}\}$ \Comment{$\bar{x}_k=\frac{1}{N}\sum_{i=1}^Nx^i_k$}
        \State $x^i_{k+1/2}=\sum_{j\in Nb(i)}\pi_{ij}x^j_k$
        \State $x^i_{k+1}=x^i_{k+1/2}-\alpha_k \nabla f^i(x^i_k)$
    \EndFor
\EndFor
\end{algorithmic}
\end{algorithm}
Though there exist abundant theoretical results for DGD, obtaining the convergence rate for the $(L_0, L_1)$-smooth functions is new and non-trivial. We recall the key loss descent property that has significantly assisted in convergence in traditional analysis:
$
    f^i(y)\leq f^i(x) +\langle\nabla f^i(x), y-x\rangle + \frac{L_0}{2}\|y-x\|^2, \forall x,y\in\mathbb{R}^d
$.
Equivalently, the last inequality leads to the Lipschitz continuous gradient in the following:
$
    \|\nabla f^i(y)-\nabla f^i(x)\|\leq L_0\|y-x\|$.
The difference between Eq.~\ref{eq_3} and Lipschitz continuous gradient is the smoothness constant. The former has the additional $L_1\|\nabla f^i(x)\|$, which poses difficulty in the analysis. Essentially, when $L_1=0$, the $(L_0,L_1)$-smoothness degenerates to $L_0$-smoothness. 
However, Assumption~\ref{definition_1} also leads to the following relationship that is critical to establish the loss descent and summarized in Lemma~\ref{lemma_1}.
\begin{lemma}\label{lemma_1}
    Suppose that $f^i$ is a twice continuously differentiable function. Then $f^i$ is $(L_0,L_1)$-smooth if and only if the following relationship holds for any $x, y\in\mathbb{R}^d$,
$
        f^i(y)\leq f^i(x) +\langle\nabla f^i(x), y-x\rangle + (L_0+L_1\|\nabla f^i(x)\|)\frac{\psi(L_1\|y-x\|)}{L_1^2},
$
where $\psi(\gamma)=e^\gamma-\gamma - 1, \gamma\geq 0$.
\end{lemma}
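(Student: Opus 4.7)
The plan is to establish both directions of the equivalence by combining the Hessian characterization of $(L_0,L_1)$-smoothness with a Gronwall-style bound on the gradient norm along the segment from $x$ to $y$, and then integrating. The forward direction is what downstream proofs will invoke, so I will carry it out in detail and keep the converse brief.

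\textbf{Forward direction ($\Rightarrow$).} First I would note that twice continuous differentiability together with Assumption~\ref{definition_1} implies the pointwise Hessian bound $\|\nabla^2 f^i(x)\|\le L_0+L_1\|\nabla f^i(x)\|$, obtained by dividing Eq.~\ref{eq_3} by $\|y-x\|$ and letting $y\to x$ along an arbitrary direction. Next, I parameterize the segment by $z(s):=x+s(y-x)$ for $s\in[0,1]$ and set $\phi(s):=\|\nabla f^i(z(s))\|$. The Hessian bound gives
\begin{equation*}
|\phi'(s)|\le \|\nabla^2 f^i(z(s))\|\,\|y-x\|\le (L_0+L_1\phi(s))\,\|y-x\|,
\end{equation*}
so with $a:=L_1\|y-x\|$, Gronwall's inequality (after adding $L_0/L_1$ to both sides to turn the affine ODE into a linear one) yields
\begin{equation*}
L_0+L_1\phi(s)\le (L_0+L_1\|\nabla f^i(x)\|)\,e^{a s}.
\end{equation*}
Then I would apply Taylor's theorem with integral remainder,
\begin{equation*}
f^i(y)-f^i(x)-\langle\nabla f^i(x),y-x\rangle=\int_0^1(1-s)\,\langle\nabla^2 f^i(z(s))(y-x),\,y-x\rangle\,ds,
\end{equation*}
upper bound the integrand by $(L_0+L_1\phi(s))\,\|y-x\|^2$, insert the exponential bound, and evaluate $\int_0^1(1-s)e^{as}\,ds=\psi(a)/a^2$ via integration by parts to conclude
\begin{equation*}
f^i(y)-f^i(x)-\langle\nabla f^i(x),y-x\rangle\le (L_0+L_1\|\nabla f^i(x)\|)\,\frac{\psi(L_1\|y-x\|)}{L_1^2}.
\end{equation*}

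\textbf{Converse direction ($\Leftarrow$).} Setting $y=x+tv$ for a unit vector $v$ and $t\downarrow 0$, I would Taylor-expand both sides: the left-hand side equals $\tfrac{t^2}{2}\langle\nabla^2 f^i(x)v,v\rangle+o(t^2)$, while $\psi(L_1 t)=\tfrac{1}{2}(L_1 t)^2+O(t^3)$ on the right. Dividing by $t^2/2$ and letting $t\to 0$ gives $\langle\nabla^2 f^i(x)v,v\rangle\le L_0+L_1\|\nabla f^i(x)\|$ for every unit $v$, hence the Hessian bound by symmetry of $\nabla^2 f^i(x)$. From this, Eq.~\ref{eq_3} restricted to $\|y-x\|\le 1/L_1$ follows by integrating $\nabla^2 f^i$ along the segment and reusing the Gronwall bound, closing the equivalence.

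The main obstacle I anticipate is the consolidation step in the forward direction: the factor $L_0+L_1\|\nabla f^i(x)\|$ must cleanly exit the integral so that the remaining $\int_0^1(1-s)e^{as}\,ds$ produces exactly $\psi(a)/a^2$. This hinges on the algebraic trick of shifting $\phi$ by $L_0/L_1$ so that the exponential simultaneously absorbs the additive $L_0$ and the multiplicative $L_1\phi(s)$ contributions; without this trick the bound leaves residual $L_0$-only terms and fails to match the prescribed closed form. Every other step is routine calculus.
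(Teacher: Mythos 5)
Your forward direction is correct and is, in substance, the paper's own proof: the paper merely defers to Lemma~2.5 of Vankov et al., and the argument behind that cited lemma is exactly your chain — pointwise Hessian bound from Eq.~\ref{eq_3}, a Gronwall bound giving $L_0+L_1\phi(s)\le (L_0+L_1\|\nabla f^i(x)\|)e^{as}$ along the segment, Taylor's theorem with integral remainder, and the identity $\int_0^1(1-s)e^{as}\,ds=\psi(a)/a^2$. The one substantive caveat concerns your converse. Recovering the Hessian bound $\|\nabla^2 f^i(x)\|\le L_0+L_1\|\nabla f^i(x)\|$ from the descent inequality by the $t\downarrow 0$ expansion is fine, but your final step — integrating $\nabla^2 f^i$ along the segment and "reusing the Gronwall bound" — does not return Eq.~\ref{eq_3} with the same constants: it gives $\|\nabla f^i(y)-\nabla f^i(x)\|\le (L_0+L_1\|\nabla f^i(x)\|)\,\|y-x\|\,\frac{e^{a}-1}{a}$ with $a=L_1\|y-x\|\le 1$, i.e.\ Eq.~\ref{eq_3} inflated by a factor up to $e-1$. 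So the "if and only if" closes only up to absolute constants; this looseness is inherent in the lemma's phrasing and is not resolved by the paper's citation-only proof either, and since every downstream result uses only the forward implication it is harmless. A final minor point: $\phi(s)=\|\nabla f^i(z(s))\|$ need not be differentiable where the gradient vanishes, so the Gronwall step should be stated for the absolutely continuous function $\phi$ with the derivative bound holding almost everywhere (or applied to $\sqrt{\|\nabla f^i\|^2+\varepsilon}$ and $\varepsilon\to 0$); this is standard and does not affect the conclusion.
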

Lemma~\ref{lemma_1} shows the loss descent with the new smoothness condition in decentralized settings. Nevertheless, the third term on the right hand side has the exponential term. Using the existing proof techniques fails to derive the explicit convergence rate, for DGD. In this paper, we will come up with new analysis techniques to resolve this issue. The techniques are \textit{non-trivial} as we have to consider the impact of the consensus estimate among different agents on the optimality. The first key is to establish the local progress on the averaged model with $(L_0,L_1)$-smoothness and then extend it from $f^i$ to $F$.
% We will now present the lemma to quantify the local loss descent. 
\begin{lemma}\label{lemma_2}
    Let Assumption~\ref{definition_1} hold. Suppose that $f^i:\mathbb{R}^d\to\mathbb{R}$ is twice continuously differentiable. Let $\bar{x}_k\in\mathbb{R}^d$, where $\bar{x}_k=\frac{1}{N}\sum_{i=1}^Nx^i_k$, and $x^i_k$ is produced by Algorithm~\ref{alg:dgd} with step size $\alpha_k=\textnormal{min}\{\frac{1}{2L_0},\frac{1}{3L_1\textnormal{max}_i\{\|\nabla f^i(x^i_k)\|\}}, \frac{1}{3L_1\textnormal{max}_i\{\|\nabla f^i(\bar{x}_k)\|\}}\}$. Then, the following relationship holds:
$
        f^i(\bar{x}_k)-f^i(\bar{x}_{k+1})\geq \frac{\|\nabla f^i(\bar{x}_k)\|^2}{4L_0+6L_1\|\nabla f^i(\bar{x}_k)\|}.
$
\end{lemma}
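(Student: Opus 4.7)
The plan is to apply the exponential descent inequality of Lemma~\ref{lemma_1} to $f^i$ on the segment from $\bar{x}_k$ to $\bar{x}_{k+1}$, and then exploit the three-piece clipped step size to suppress the exponential residual and extract a centralized-style quadratic descent in $\|\nabla f^i(\bar{x}_k)\|$. Concretely, Lemma~\ref{lemma_1} with $x=\bar{x}_k$ and $y=\bar{x}_{k+1}$ produces an upper bound on $f^i(\bar{x}_{k+1})-f^i(\bar{x}_k)$ involving the linear term $\langle\nabla f^i(\bar{x}_k),\bar{x}_{k+1}-\bar{x}_k\rangle$ and a residual $(L_0+L_1\|\nabla f^i(\bar{x}_k)\|)\psi(L_1\|\bar{x}_{k+1}-\bar{x}_k\|)/L_1^2$. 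Since $\Pi$ is doubly stochastic, $\bar{x}_{k+1}-\bar{x}_k=-\alpha_k\bar{g}_k$ with $\|\bar{g}_k\|\leq\max_i\|\nabla f^i(x^i_k)\|$, and the clipping piece $\alpha_k\leq 1/(3L_1\max_i\|\nabla f^i(x^i_k)\|)$ forces $L_1\|\bar{x}_{k+1}-\bar{x}_k\|\leq 1/3$, confining the exponential argument to a regime where $\psi$ is essentially quadratic.

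Next, I would employ the elementary estimate $\psi(\gamma)\leq\gamma^2/(2-\gamma)$, which is immediate from the power series $\psi(\gamma)=\sum_{k\geq 0}\gamma^{k+2}/(k+2)!$ together with $(k+2)!\geq 2\cdot 2^k$, and which on $[0,1/3]$ tightens to $\psi(\gamma)\leq\tfrac{3}{5}\gamma^2$. This bounds the residual by $\tfrac{3}{5}(L_0+L_1\|\nabla f^i(\bar{x}_k)\|)\,\alpha_k^2\|\bar{g}_k\|^2$, while the linear term contributes $-\alpha_k\|\nabla f^i(\bar{x}_k)\|^2$ after identifying $f^i$'s own descent direction at $\bar{x}_k$. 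Assembling these pieces yields
\[
f^i(\bar{x}_k)-f^i(\bar{x}_{k+1})\;\geq\;\alpha_k\|\nabla f^i(\bar{x}_k)\|^2\bigl(1-\tfrac{3}{5}\alpha_k(L_0+L_1\|\nabla f^i(\bar{x}_k)\|)\bigr).
\]

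Finally, the two remaining step-size constraints $\alpha_k\leq 1/(2L_0)$ and $\alpha_k\leq 1/(3L_1\|\nabla f^i(\bar{x}_k)\|)$ give $\alpha_k(L_0+L_1\|\nabla f^i(\bar{x}_k)\|)\leq 1/2+1/3=5/6$, so the bracketed factor is at least $1/2$; combined with the elementary inequality $\min\{1/(2L_0),1/(3L_1 g)\}\geq 1/(2L_0+3L_1 g)$ (taking $g=\|\nabla f^i(\bar{x}_k)\|$), this produces the advertised denominator $4L_0+6L_1\|\nabla f^i(\bar{x}_k)\|$. The hard part is reconciling the averaged step, which is actually driven by $\bar{g}_k$ (a mixture of gradients at the possibly misaligned local iterates $x^i_k$), with the per-agent gradient $\nabla f^i(\bar{x}_k)$ that appears in the stated descent; the third clipping piece $\alpha_k\leq 1/(3L_1\max_i\|\nabla f^i(\bar{x}_k)\|)$ is included precisely to keep both the exponential argument and this gradient-mismatch coupling controllable in the presence of consensus error, so that the per-agent descent survives intact despite being stated along a trajectory that is driven by mixed gradients rather than $\nabla f^i(\bar{x}_k)$ alone.
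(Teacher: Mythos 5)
Your overall template (apply Lemma~\ref{lemma_1} at $x=\bar{x}_k$, $y=\bar{x}_{k+1}$, use the clipping to force $L_1\|\bar{x}_{k+1}-\bar{x}_k\|\leq 1/3$, bound $\psi(\gamma)\leq\tfrac{3}{5}\gamma^2$ on that interval, and combine the step-size pieces via $\min\{1/a,1/b\}\geq 1/(a+b)$) matches the spirit of the paper's argument, and your estimate on $\psi$ is correct. But the proposal has a genuine gap exactly where you yourself flag ``the hard part'': the linear term is $-\alpha_k\langle\nabla f^i(\bar{x}_k),\bar{g}_k\rangle$ with $\bar{g}_k=\frac{1}{N}\sum_j\nabla f^j(x^j_k)$, and you replace it by $-\alpha_k\|\nabla f^i(\bar{x}_k)\|^2$ ``after identifying $f^i$'s own descent direction.'' No such identification is available: $\bar{g}_k$ mixes other agents' objectives evaluated at other agents' iterates, and $\langle\nabla f^i(\bar{x}_k),\bar{g}_k\rangle$ can be smaller than $\|\nabla f^i(\bar{x}_k)\|^2$, zero, or negative. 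The third clipping piece cannot rescue this, because clipping bounds the magnitude of the step, not its alignment with $\nabla f^i(\bar{x}_k)$; it controls the argument of $\psi$ but says nothing about the sign or size of the inner product. The same unjustified identification recurs twice more: you silently replace $\|\bar{g}_k\|^2$ by $\|\nabla f^i(\bar{x}_k)\|^2$ in the residual term, and your closing lower bound $\alpha_k\geq 1/(2L_0+3L_1\|\nabla f^i(\bar{x}_k)\|)$ ignores the min-piece governed by $\max_j\|\nabla f^j(x^j_k)\|$, which can be far smaller for agent $i$ than either of the two pieces you retain.

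For comparison, the paper's proof never evaluates the upper bound of Lemma~\ref{lemma_1} at the actual averaged iterate. It instead observes that the minimizer of that upper bound over $y$ is a step of length $\eta^*$ along $-\nabla f^i(\bar{x}_k)/\|\nabla f^i(\bar{x}_k)\|$, computes $\eta^*$ and the corresponding optimal step size explicitly, and then imports the centralized clipped-step analysis (Case 3 of Lemma B.1 in~\cite{vankov2024optimizing}) for that virtual step, so that the mixed gradient $\bar{g}_k$ never appears in the final relation. Whether substituting the minimizer for the algorithm's actual iterate is itself fully rigorous is a separate question, but it is the mechanism the paper uses to dodge the inner-product issue; your proposal needs either an analogous mechanism or an explicit bound on the mismatch $\langle\nabla f^i(\bar{x}_k),\bar{g}_k-\nabla f^i(\bar{x}_k)\rangle$ (via consensus and heterogeneity control) before the stated per-agent descent follows.
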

The conclusion in Lemma~\ref{lemma_2} generalizes the conclusion of Lemma B.2 in~\cite{vankov2024optimizing} from centralized to decentralized settings, while differing in the update law and step size. 
% One may wonder due to the local update, the gradient should be $\nabla f^i(x^i)$, instead of $\nabla f^i(v^i)$. However, when each agent's model parameters are significantly different, calculating the gradient w.r.t. $v^i$ can decrease the local agent's bias such that it reduces the impact of the model diversity on the optimality empirically.
This lemma critically suggests that with $(L_0,L_1)$-smoothness assumption and a properly selected step size, the local loss consecutively decreases, ensuring the same property for the global loss $F$ in the following lemmas. In this context, the analysis is focused on the ensemble average, $\bar{x}_k$ instead of $x^i_k$ as we will investigate the global loss induced by each local loss~\cite{saadati2024dimat,esfandiari2021cross}.
The step size in Lemma~\ref{lemma_2} acts as a clipping step size that has been quite popular and practically useful in many empirical tasks. A recent finding reported in~\cite{vankov2024optimizing} reveals that it is a simple approximation of the \textit{optimal} step sizes, ensuring the bound on the objective progress in Lemma~\ref{lemma_2}. 
Pertaining to Eq.~\ref{eq_3}, we can know that the equivalent smoothness constant has now varied with the gradient $\|\nabla f^i(\bar{x}_k)\|$ when $x=\bar{x}_k$. Hence, we investigate two scenarios, i.e., $\|\nabla f^i(\bar{x}_k)\|\geq L_0/L_1$ and $\|\nabla f^i(\bar{x}_k)\|< L_0/L_1$.

% \textbf{The scenario of $\|\nabla f^i(\bar{x}_k)\|\geq L_0/L_1$}. We have the auxiliary technical results to assist in characterizing the main result for $F(\bar{x}_k)$.
% We define $f^i_k:=f^i(\bar{x}_k)-f^i_*$. 
% The reason why we define the above equivalent relationships for $\bar{x}_k$ instead of $x^i_k$ is because we will investigate the average $\bar{x}_k$ in this context and then leverage the relationship between $\bar{x}_k$ and $x^i_k$ shown later to bring the consensus estimate error into the optimality, yielding explicitly how the former influences the latter. 
% For the convenience of analysis, we define $\psi(\gamma):=\frac{\gamma^2}{4L_0+6L_1\gamma}$, where $\gamma\geq 0$. Based on Lemma~\ref{lemma_2}, we have:
% $
%     f^i(\bar{x}_k) - f^i(\bar{x}_{k+1}) \geq \psi(\|\nabla f^i(\bar{x}_k)\|).
% $
% With this in hand, we now state the following lemma to characterize the relationship for $F(\bar{x}_k)$.
\begin{lemma}\label{lemma_3}
    Let Assumption~\ref{definition_1} hold. Suppose that $f^i:\mathbb{R}^d\to\mathbb{R}$ is twice continuously differentiable. If $\|\nabla f^i(\bar{x}_k)\|\geq L_0/L_1$,     $
        F(\bar{x}_k)-F(\bar{x}_{k+1})\geq \frac{\|\nabla F(\bar{x}_k)\|}{10L_1}
    $; if $\|\nabla f^i(\bar{x}_k)\|< L_0/L_1$, $
    F(\bar{x}_k)-F(\bar{x}_{k+1})\geq \frac{\|\nabla F(\bar{x}_k)\|^2}{10L_0}
$. $\bar{x}_k$ is defined as in Lemma~\ref{lemma_2}.
\end{lemma}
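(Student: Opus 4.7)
The plan is to start from Lemma~\ref{lemma_2}, which gives a per-agent descent along $\bar{x}_k \to \bar{x}_{k+1}$. Averaging that inequality over $i = 1, \ldots, N$ directly produces
\begin{equation*}
F(\bar{x}_k) - F(\bar{x}_{k+1}) \;\geq\; \frac{1}{N}\sum_{i=1}^N \frac{\|\nabla f^i(\bar{x}_k)\|^2}{4L_0 + 6L_1 \|\nabla f^i(\bar{x}_k)\|}.
\end{equation*}
From here I would split into the two regimes demarcated by $L_0/L_1$, simplify each denominator to a clean multiple of $L_1\|\nabla f^i(\bar{x}_k)\|$ or $L_0$, and then pass from the average of per-agent (squared) gradient norms down to the norm of $\nabla F(\bar{x}_k) = \tfrac{1}{N}\sum_i \nabla f^i(\bar{x}_k)$ via a convexity/Jensen argument.

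For the first regime, where $L_1\|\nabla f^i(\bar{x}_k)\| \geq L_0$, I would replace $L_0$ in the denominator by the larger quantity $L_1\|\nabla f^i(\bar{x}_k)\|$, giving $4L_0 + 6L_1\|\nabla f^i(\bar{x}_k)\| \leq 10L_1\|\nabla f^i(\bar{x}_k)\|$. Each summand then collapses to $\|\nabla f^i(\bar{x}_k)\|/(10L_1)$, and the triangle inequality (equivalently, Jensen's inequality applied to $\|\cdot\|$) yields $\tfrac{1}{N}\sum_i \|\nabla f^i(\bar{x}_k)\| \geq \|\tfrac{1}{N}\sum_i \nabla f^i(\bar{x}_k)\| = \|\nabla F(\bar{x}_k)\|$, delivering the first claim.

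For the second regime, where $L_1\|\nabla f^i(\bar{x}_k)\| < L_0$, the denominator is bounded above by $10L_0$, so each summand is at least $\|\nabla f^i(\bar{x}_k)\|^2/(10L_0)$. Jensen's inequality applied to the convex map $t \mapsto \|t\|^2$ gives $\tfrac{1}{N}\sum_i \|\nabla f^i(\bar{x}_k)\|^2 \geq \|\tfrac{1}{N}\sum_i \nabla f^i(\bar{x}_k)\|^2 = \|\nabla F(\bar{x}_k)\|^2$, proving the second claim.

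The main obstacle I anticipate is interpreting the case hypothesis consistently across agents: the statement writes ``$\|\nabla f^i(\bar{x}_k)\| \geq L_0/L_1$'' without an explicit quantifier in $i$, and the denominator comparison must hold uniformly for every $i$ appearing in the sum. The cleanest reading is to treat the two regimes as $\min_i \|\nabla f^i(\bar{x}_k)\| \geq L_0/L_1$ and $\max_i \|\nabla f^i(\bar{x}_k)\| < L_0/L_1$ respectively, which makes the denominator bounds valid term by term. If the regimes were actually mixed across $i$, the argument still goes through by splitting the sum along the threshold, at the cost of slightly messier constants, but no additional ideas are required beyond what is sketched above.
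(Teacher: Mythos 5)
Your proposal is correct and follows essentially the same route as the paper's proof: both apply the per-agent descent bound from Lemma~\ref{lemma_2}, bound the denominator by $10L_1\|\nabla f^i(\bar{x}_k)\|$ or $10L_0$ in the respective regimes, average over agents, and pass to $\|\nabla F(\bar{x}_k)\|$ via the triangle inequality (Case 1) and Jensen's inequality for $\|\cdot\|^2$ (Case 2). Your remark about the implicit quantifier over $i$ in the case hypothesis is a fair observation; the paper's proof indeed assumes the condition holds uniformly across agents without stating so.
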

It implies that the global loss descent is dictated heavily by the value of $\|\nabla F(\bar{x}_k)\|$ due to Lemma~\ref{lemma_2}. Another interesting observation is that when $L_1\to 0$, making each $f^i$ close to $L_0$-smooth, the loss descent may be more significant for the phase with larger gradient norm. However, a larger number of agents can lead to slow convergence, which has empirically been validated in previous works~\cite{saadati2024dimat}. 
% % \textbf{The scenario of $\|\nabla f^i(\bar{x}_k)\|< L_0/L_1$}. 
% % Likewise, following definitions in the above scenario, we can immediately have the following theoretical statement.
% \begin{lemma}\label{lemma_4}
%     Let Assumption~\ref{definition_1} hold. Suppose that $f^i:\mathbb{R}^d\to\mathbb{R}$ is twice continuously differentiable and $\|\nabla f^i(\bar{x}_k)\|< L_0/L_1$. $\bar{x}_k$ is defined as in Lemma~\ref{lemma_2}. Thus, the following relationship holds:
% \end{lemma}
When $\|\nabla f^i(\bar{x}_k)\|< L_0/L_1$, it is equivalent to the bounded gradient such that the conclusion from Lemma~\ref{lemma_3} exhibits the typical loss descent in numerous existing works. We are now ready to state the first main result for generally convex functions by combining the two scenarios.
\begin{theorem}\label{theorem_1} (Convex, deterministic)
    Let Assumptions~\ref{assumption_1} and~\ref{definition_1} hold. Let $\{\bar{x}_k\}$ be defined as in Lemma~\ref{lemma_2} with step size $\alpha_k=\textnormal{min}\{\frac{1}{2L_0},\frac{1}{3L_1\textnormal{max}_i\{\|\nabla f^i(x^i_k)\|\}}, \frac{1}{3L_1\textnormal{max}_i\{\|\nabla f^i(\bar{x}_k)\|\}}\}$. Suppose that $f^i:\mathbb{R}^d\to\mathbb{R}$ is twice continuously differentiable convex function. Let $x^*$ be an arbitrary solution to the problem in Eq.~\ref{eq_1} and let $F_0:=F(\bar{x}_0)-F^*$. Then, $F(\bar{x}_K)-F^*\leq \epsilon$ for any given $0\leq \epsilon\leq F(\bar{x}_0)$ whenever
    $K\geq 10\textnormal{max}\{L_1R\textnormal{ln}\frac{F_0}{\epsilon},\frac{L_0R^2}{\epsilon}\}$,
    where $R=\|\bar{x}_0-x^*\|$.
\end{theorem}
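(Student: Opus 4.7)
The plan is to combine the two-case descent estimate from Lemma~\ref{lemma_3} with a convex-function bound of the form
\begin{equation*}
F(\bar{x}_k) - F^* \;\leq\; \langle \nabla F(\bar{x}_k), \bar{x}_k - x^*\rangle \;\leq\; \|\nabla F(\bar{x}_k)\|\,\|\bar{x}_k - x^*\|,
\end{equation*}
which converts gradient-norm descent into suboptimality-gap descent. The first ingredient I would establish is a Fej\'er-type monotonicity $\|\bar{x}_{k+1} - x^*\| \leq \|\bar{x}_k - x^*\| \leq R$ for all $k$. For this, I would expand $\|\bar{x}_{k+1}-x^*\|^2$ using the averaged update $\bar{x}_{k+1} = \bar{x}_k - \alpha_k \bar g_k$ with $\bar g_k = \tfrac{1}{N}\sum_i \nabla f^i(x^i_k)$, invoke convexity of each $f^i$ to lower bound $\langle \bar g_k,\bar x_k - x^*\rangle$, and exploit the explicit clipping structure of $\alpha_k$ (which enforces $\alpha_k\|\nabla f^i(x^i_k)\|\le 1/(3L_1)$ and $\alpha_k \le 1/(2L_0)$) to absorb the quadratic remainder $\alpha_k^2 \|\bar g_k\|^2$.

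Once $\|\bar x_k - x^*\| \leq R$ is in hand, the two regimes of Lemma~\ref{lemma_3} produce complementary rates. In the large-gradient regime $\|\nabla f^i(\bar x_k)\| \ge L_0/L_1$, the inequality $F(\bar x_k) - F(\bar x_{k+1}) \ge \|\nabla F(\bar x_k)\|/(10L_1) \ge (F(\bar x_k) - F^*)/(10 L_1 R)$ yields the geometric contraction
\begin{equation*}
F(\bar x_{k+1})-F^* \;\leq\; \Bigl(1-\tfrac{1}{10L_1R}\Bigr)\bigl(F(\bar x_k)-F^*\bigr),
\end{equation*}
and unrolling with $\ln(1-x)\le -x$ shows that $K_1 \ge 10 L_1 R \ln(F_0/\epsilon)$ such iterations already drive the gap below $\epsilon$. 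In the small-gradient regime $\|\nabla f^i(\bar x_k)\| < L_0/L_1$, writing $\phi_k := F(\bar x_k)-F^*$ gives $\phi_k - \phi_{k+1} \ge \phi_k^2/(10 L_0 R^2)$; dividing by $\phi_k\phi_{k+1}$ and using the monotonicity $\phi_{k+1}\le \phi_k$ delivers $1/\phi_{k+1} - 1/\phi_k \ge 1/(10 L_0 R^2)$, and telescoping yields $\phi_K \le 10 L_0 R^2/K$, so that $K_2 \ge 10 L_0 R^2/\epsilon$ suffices for this regime.

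Partitioning the iterates into the two regimes and noting that $K_1 + K_2 \le 2\max\{K_1,K_2\}$ delivers the claimed bound $K \ge 10\max\{L_1R\ln(F_0/\epsilon),\,L_0 R^2/\epsilon\}$ up to constants. The step I expect to be the main obstacle is the Fej\'er-type monotonicity: unlike centralized GD, $\bar g_k$ differs from $\nabla F(\bar x_k)$ by a consensus-error term $\tfrac{1}{N}\sum_i (\nabla f^i(x^i_k)-\nabla f^i(\bar x_k))$, and one must use both branches of the clipping step size simultaneously to control the quadratic remainder without inflating $R_k$ or introducing stray network terms such as $1-\sqrt{\rho}$ into the final bound. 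This appears to be precisely where the authors' non-trivial analysis is concentrated, and the fact that the relaxed smoothness is only assumed for $\|y-x\|\le 1/L_1$ (Assumption~\ref{definition_1}) is likely what keeps these consensus-error contributions from blowing up exponentially.
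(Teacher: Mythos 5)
Your proposal follows essentially the same route as the paper: the paper likewise establishes Fej\'er-type monotonicity $\|\bar{x}_{k+1}-x^*\|\le\|\bar{x}_k-x^*\|$ via an auxiliary lemma, combines the convexity bound $F(\bar{x}_k)-F^*\le\|\nabla F(\bar{x}_k)\|R$ with the two-regime descent of Lemma~\ref{lemma_3}, and solves the resulting recursions (the paper uses an integral comparison $1\le\int_{F_{k+1}}^{F_k}d\tau/h(\tau/R)$ where you unroll the recursion directly, but these are equivalent). The only cosmetic difference is that your regime-partitioning argument introduces a factor $K_1+K_2\le 2\max\{K_1,K_2\}$, whereas the paper simply takes the maximum of the two case-wise thresholds.
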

    Theorem~\ref{theorem_1} explicitly shows the complexity bound for $(L_0,L_1)$-DGD with convex functions. To the best of our knowledge, this is the first time showing the established rigorous complexity bound with the relaxed smooth assumption in decentralized setting. Compared to the recent results in its centralized counterparts~\cite{lobanov2024linear,vankov2024optimizing,gorbunov2024methods}, our result reveals the similar rate, but differing in constants due to the number of agents, which causes more computational overheads in practice. Additionally, Theorem~\ref{theorem_1} implies linear convergence rate when $\|\nabla f^i(\bar{x}_k)\|\geq L_0/L_1$ and sublinear rate when $\|\nabla f^i(\bar{x}_k)\|< L_0/L_1$, which has been disclosed in~\cite{lobanov2024linear} for centralized settings. We also see that the complexity bound is influenced by the initialization error $F_0$ that could be arbitrarily poor. Thus, a reasonably good initialization enables the smaller complexity. Also, when comparing the rate in Theorem~\ref{theorem_1} to that in~\cite{yuan2016convergence} for only $L_0$-smooth convex functions, though both of them remain the same, the latter only ensures the convergence to the neighborhood of $x^*$ at the level of $\mathcal{O}(\alpha)$, where $\alpha$ is a constant step size. Another interesting observation from our work is that based on recent results of the monotonicity of gradient norm~\cite{lobanov2024linear,gorbunov2024methods}, $\alpha_k$ defined in Theorem~\ref{theorem_1} can somehow acts as a \textit{diminishing} step size, but not negatively affecting the convergence rate. However, in another work~\cite{zeng2018nonconvex}, to ensure the convergence to $F^*$, they require bounded gradient assumption and a diminishing step size in a form of $\mathcal{O}(1/k^{\varepsilon})$, where $\varepsilon\in (0,1]$, while attaining a worse bound $\tilde{\mathcal{O}}(1/\epsilon^2)$. Also, the uniform upper bound for the gradient can be arbitrarily large.
    Theorem~\ref{theorem_1} also implies that under Assumption~\ref{definition_1} with $L_0=0$, Algorithm~\ref{alg:dgd} converges to the desired accuracy $\epsilon$ after $K=\mathcal{O}(L_1R\textnormal{ln}(F_0/\epsilon))$ iterations.
    
Our complexity lower bound does not need the uniformly bounded gradient assumption, making it much tighter and more generalizable. However, the dependence on the spectral gap $1-\sqrt{\rho}$ is not explicitly shown in the complexity bound, and it is recognized as the indicator of the impact of different topologies. In previous works~\cite{berahas2018balancing,zeng2018nonconvex}, such an impact is demonstrated through the consensus error, which is characterized by $R$ in our case. Thus, to clearly illustrate the central role of the spectral gap, we present the following result by upper bounding $R$.
\begin{corollary}\label{corollary_1}
    Let Assumptions~\ref{assumption_1} and~\ref{definition_1} hold. With Theorem~\ref{theorem_1}, we have $F(\bar{x}_K)-F^*\leq \epsilon$ whenever
    $
    K\geq 10\textnormal{max}\{\sqrt{2}L_1C_0\textnormal{ln}\frac{F_0}{\epsilon}, \frac{L_0}{\epsilon}C_0^2
    \},
    $
where $C_0=\sqrt{\frac{1}{9(1-\sqrt{\rho})^2L^2_1}+\textnormal{max}_i\|x^i_0-x^*\|^2}$.
\end{corollary}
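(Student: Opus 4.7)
The plan is to invoke Theorem~\ref{theorem_1} and upper bound $R=\|\bar{x}_0-x^*\|$ by $\sqrt{2}\,C_0$. Substituting $R\le\sqrt{2}\,C_0$ into the complexity $10\max\{L_1 R\ln(F_0/\epsilon),\,L_0 R^2/\epsilon\}$ from Theorem~\ref{theorem_1} yields the stated bound; the spurious factor of $2$ on the quadratic term can be absorbed into the leading constant, or equivalently dispatched by noting the tighter Jensen bound $R^2 \le \max_i\|x^i_0-x^*\|^2 \le C_0^2$ for the $L_0$ contribution. The core task thus reduces to showing $R^2 \le 2C_0^2$, obtained by splitting the initial distance to the optimum into a consensus component and a per-agent distance component.

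First, by the elementary inequality $\|a+b\|^2\le 2\|a\|^2+2\|b\|^2$ applied with $a=\bar{x}_0-x^i_0$ and $b=x^i_0-x^*$, for every $i\in\mathcal{V}$ one has $\|\bar{x}_0-x^*\|^2 \le 2\|\bar{x}_0-x^i_0\|^2 + 2\|x^i_0-x^*\|^2$. Maximizing over $i$ on the right yields $R^2 \le 2\max_i\|\bar{x}_0-x^i_0\|^2 + 2\max_i\|x^i_0-x^*\|^2$, where the second term already matches the non-consensus component of $2C_0^2$. Second, I would bound the consensus term $\max_i\|\bar{x}_0-x^i_0\|^2$ by $\tfrac{1}{9L_1^2(1-\sqrt{\rho})^2}$ using two ingredients: (i) the clipping built into $\alpha_k$ in Algorithm~\ref{alg:dgd} enforces $\alpha_k \max_i\|\nabla f^i(x^i_k)\|\le\tfrac{1}{3L_1}$, and (ii) Assumption~\ref{assumption_1} provides $\sqrt{\rho}$-contraction of $\Pi$ on the subspace orthogonal to the all-ones vector. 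Subtracting the averaged update from each agent's update and taking norms yields a recursion of the form ``spectral contraction plus clipped gradient perturbation'', which unrolls via a geometric series in $\sqrt{\rho}$ to the uniform-in-$k$ estimate $\max_i\|\bar{x}_k-x^i_k\|\le \tfrac{1}{3L_1(1-\sqrt{\rho})}$, and in particular at $k=0$. Combining the two steps gives $R^2\le 2C_0^2$ as required.

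The main obstacle will be rigorously justifying the consensus bound at $k=0$, since the initial iterates $\{x^i_0\}$ are in principle arbitrary and the recursion-based argument is most naturally applied for $k\ge 1$. In decentralized optimization this is standardly resolved either by adopting a common initialization (so $\max_i\|x^i_0-\bar{x}_0\|=0$ and the clipping-derived bound $\tfrac{1}{3L_1(1-\sqrt{\rho})}$ is a conservative over-estimate), or by treating the stated bound as a uniform trajectory property derived alongside the main analysis---the same device implicitly used when decoupling the consensus error from the optimization progress in the proof of Theorem~\ref{theorem_1}. A secondary bookkeeping subtlety is the discrepancy between the $\sqrt{2}$ factor that appears naturally in the $L_1$ term and its absence in the $L_0 C_0^2/\epsilon$ term, which is reconciled by invoking the tighter Jensen bound $R^2\le C_0^2$ for the quadratic contribution.
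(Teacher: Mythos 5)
Your proposal follows essentially the same route as the paper's proof: decompose $\|\bar{x}_0-x^*\|^2$ via $\|a+b\|^2\le 2\|a\|^2+2\|b\|^2$ into a consensus term and a per-agent distance term, bound the consensus term by $\tfrac{1}{9L_1^2(1-\sqrt{\rho})^2}$ using the clipping in $\alpha_k$ together with the $\sqrt{\rho}$-contraction of $\Pi$ (the paper simply cites Lemma 4.2 of Berahas et al.\ for the recursion you sketch), and substitute $R^2\le 2C_0^2$ into Theorem~\ref{theorem_1}. You are in fact more careful than the paper on the two points you flag --- the validity of the consensus bound at $k=0$ for arbitrary initialization, and the missing factor of $2$ in the $L_0C_0^2/\epsilon$ term --- neither of which the paper's own proof addresses.
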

According to the conclusion in Corollary~\ref{corollary_1}, we can now explicitly observe the impact of diverse topologies on the complexity bound. The key to the above complexity bound is the clipping step size $\alpha_k\leq \frac{1}{3L_1\textnormal{max}_i\{\|\nabla f^i(\bar{x}_k)\|\}}$, which avoids the adoption of bounded gradient assumption, while in \cite{zeng2018nonconvex}, such an assumption is required. When the topology is \textit{dense}, like a fully connected network, where $\rho$ is smaller, the complexity bound is smaller. On the contrary, if the topology is \textit{sparse}, such as a ring, the complexity bound is larger.
Another observation is that the bound is affected by the maximum initialization error $\|x^i-x^*\|$ among all agents. 
We now turn to $(L_0,L_1)$-smooth nonconvex functions and study its complexity bound.
In nonconvex analysis, we still follow two scenarios as discussed before such that the conclusions from Lemma~\ref{lemma_3} applies.
\begin{theorem}\label{theorem_2} (Nonconvex, deterministic)
    Let Assumptions~\ref{assumption_1} and~\ref{definition_1} hold. Let $\{\bar{x}_k\}$ be defined as in Lemma~\ref{lemma_2} with step size $\alpha_k=\textnormal{min}\{\frac{1}{2L_0},\frac{1}{3L_1\textnormal{max}_i\{\|\nabla f^i(x^i_k)\|\}}, \frac{1}{3L_1\textnormal{max}_i\{\|\nabla f^i(\bar{x}_k)\|\}}\}$. Suppose that $f^i:\mathbb{R}^d\to\mathbb{R}$ is a twice continuously differentiable nonconvex function. Let $x^*$ be an arbitrary solution to the problem in Eq.~\ref{eq_1} and let $F_0:=F(\bar{x}_0)-F^*$. Then, $\textnormal{min}_{0\leq k\leq K}\|\nabla F(\bar{x}_k)\|\leq \epsilon$ for any give $0\leq \epsilon\leq F(\bar{x}_0)$ whenever
$K+1\geq 10\textnormal{max}\{\frac{L_1F_0}{\epsilon},\frac{L_0F_0}{\epsilon^2}\}$.
\end{theorem}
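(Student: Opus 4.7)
\textbf{Proof plan for Theorem~\ref{theorem_2}.} The plan is to exploit Lemma~\ref{lemma_3} as a per-step descent inequality for the global objective $F$ along the averaged trajectory $\bar{x}_k$, telescope over $k=0,\dots,K$, and then deduce a bound on $\min_{0\leq k\leq K}\|\nabla F(\bar{x}_k)\|$ by splitting into the two gradient-norm regimes already identified in Lemma~\ref{lemma_3}. Note that the chosen step size $\alpha_k$ is exactly the one required in Lemma~\ref{lemma_2}, so Lemma~\ref{lemma_3} applies without modification and no convexity is invoked at this stage; the argument is therefore insensitive to whether $f^i$ is convex, which is why the same descent machinery can be reused for the nonconvex case.

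First, for every $k$ I would combine the two branches of Lemma~\ref{lemma_3} into a single inequality of the form
\[
F(\bar{x}_k)-F(\bar{x}_{k+1})\;\geq\;\min\!\Bigl\{\tfrac{\|\nabla F(\bar{x}_k)\|}{10L_1},\;\tfrac{\|\nabla F(\bar{x}_k)\|^2}{10L_0}\Bigr\},
\]
which holds regardless of which of the two local regimes is active at step $k$. Summing this from $k=0$ to $K$ and using the trivial lower bound $F(\bar{x}_{K+1})\geq F^*$ produces
\[
F_0\;\geq\;\sum_{k=0}^{K}\min\!\Bigl\{\tfrac{\|\nabla F(\bar{x}_k)\|}{10L_1},\;\tfrac{\|\nabla F(\bar{x}_k)\|^2}{10L_0}\Bigr\}\;\geq\;(K+1)\,\min\!\Bigl\{\tfrac{g^\star}{10L_1},\;\tfrac{(g^\star)^2}{10L_0}\Bigr\},
\]
where $g^\star:=\min_{0\leq k\leq K}\|\nabla F(\bar{x}_k)\|$. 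The second inequality uses that $t\mapsto \min\{t/(10L_1),t^2/(10L_0)\}$ is nondecreasing on $[0,\infty)$ and therefore each summand is at least the value at $g^\star$.

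Next I would case-analyze on $g^\star$ relative to the threshold $L_0/L_1$. If $g^\star\geq L_0/L_1$, the minimum on the right is the linear term, giving $g^\star\leq 10L_1F_0/(K+1)$, so $g^\star\leq\epsilon$ is ensured whenever $K+1\geq 10L_1F_0/\epsilon$. If instead $g^\star<L_0/L_1$, the minimum becomes the quadratic term, yielding $(g^\star)^2\leq 10L_0F_0/(K+1)$, so $g^\star\leq\epsilon$ is ensured whenever $K+1\geq 10L_0F_0/\epsilon^2$. Taking the maximum of the two sufficient conditions gives the claimed complexity bound.

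The main obstacle I expect is conceptual rather than computational: one must justify that the case-by-case descent in Lemma~\ref{lemma_3} can be fused into a single inequality valid uniformly over $k$, and that the per-step condition (which is stated in terms of a \emph{local} gradient norm $\|\nabla f^i(\bar{x}_k)\|$) does not obstruct the telescoping on $F$. This is handled by packaging both branches into the $\min$ on the right-hand side, which implicitly dominates whichever local regime is active at step $k$. A secondary point to watch is ensuring the step size $\alpha_k$ keeps consecutive iterates within the $\|y-x\|\leq 1/L_1$ neighborhood required by Assumption~\ref{definition_1}; this is already baked into the $1/(3L_1\max_i\|\nabla f^i(\cdot)\|)$ terms of $\alpha_k$ and was exploited in establishing Lemma~\ref{lemma_2}, so no extra work is needed here.
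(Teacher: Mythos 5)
Your proposal is correct and follows essentially the same route as the paper: invoke the per-step descent of Lemma~\ref{lemma_3}, telescope from $k=0$ to $K$, lower-bound each summand by its value at $g^\star=\min_{0\le k\le K}\|\nabla F(\bar{x}_k)\|$ using monotonicity, and read off the two complexity terms from the two regimes. The one place you improve on the paper's write-up is the fusion of the two branches into the single inequality $F(\bar{x}_k)-F(\bar{x}_{k+1})\ge\min\{\|\nabla F(\bar{x}_k)\|/(10L_1),\,\|\nabla F(\bar{x}_k)\|^2/(10L_0)\}$: the paper instead runs the two cases as if the whole trajectory stayed in one regime, which silently assumes the regime does not switch across iterations, whereas your $\min$ formulation is valid uniformly in $k$ and makes the telescoping airtight. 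Both arguments inherit the same caveat from Lemma~\ref{lemma_3} itself, namely that its hypotheses are stated in terms of the local norms $\|\nabla f^i(\bar{x}_k)\|$ holding uniformly over $i$, but that is an issue with the lemma rather than with your use of it.
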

    The complexity bound in Theorem~\ref{theorem_2} still matches the bound in~\cite{vankov2024optimizing} for nonconvex functions, up to absolute constants primarily due to the number of agents in a decentralized setting. Viewing the complexity bound in Theorem~\ref{theorem_2}, we find that it is independent of $R$. However, we can still attain this through the smoothness of $F_0$ and thus include the explicit relationship with the spectral gap $1-\sqrt{\rho}$. An additional result is presented in SM for completeness. Our rate also matches the rates in~\cite{zeng2018nonconvex,tatarenko2017non}, which have different evaluation metrics (e.g., $\frac{1}{K+1}\sum_{k=0}^K\|\frac{1}{N}\sum_{i=1}^N\nabla f^i(x^i_k)\|^2\leq \epsilon$ in Theorem 1~\cite{zeng2018nonconvex} and $\|\bar{x}_k-x^*\|^2$ in Theorem 6~\cite{tatarenko2017non}). Additionally, both of them require the bounded gradient assumption for the analysis, limiting their theory. We would also like to point out due to deep learning models, numerous works established rates for stochastic settings, making the analysis for stochastic settings particularly significant. 
    % Given this, in the next, we will present our findings in stochastic settings when the functions are $(L_0,L_1)$-smooth.
\section{Stochastic Setting}\label{stochastic}
Eq.~\ref{eq_1} shows a generic problem formulation for deterministic decentralized optimization, excluding any stochastic behaviors. In machine learning, due to widespread stochastic first-order optimization methods, the formulation differs primarily in the adoption of data sampling and expectation in the analysis. Thus, we reintroduce the formulation as follows:
\begin{equation}\label{eq_12}
    F^*=\underset{x\in\mathbb{R}^{d}}{\text{min}}\frac{1}{N}\sum_{i=1}^N\mathbb{E}_{\xi_i\sim\mathcal{D}_i}[\mathcal{F}^i(x;\xi_i)],
\end{equation}
where $\xi_i$ represents a data point uniformly sampled from the local dataset $\mathcal{D}_i$ only known to agent $i$ and $\mathcal{F}^i:\mathbb{R}^d\to\mathbb{R}$ is the local loss. Eq.~\ref{eq_12} can be boiled down to Eq.~\ref{eq_1} if we set $f^i(x):=\mathbb{E}_{\xi_i\sim\mathcal{D}_i}[\mathcal{F}^i(x;\xi_i)]$. We present decentralized stochastic gradient descent (DSGD) with $(L_0,L_1)$-smoothness in Algorithm~\ref{alg:dsgd} for completeness. To analyze the convergence rate for DSGD, it is typically required that the stochastic gradient is unbiased such that $\nabla f^i(x) = \mathbb{E}[\nabla\mathcal{F}^i(x;\xi_i)], \forall x\in\mathbb{R}^d$. Due to the stochastic variance and local data diversities among agents, we have the following assumption.
\begin{algorithm}
\caption{$(L_0,L_1)$-Decentralized Stochastic Gradient Descent (DSGD)}\label{alg:dsgd}
\begin{algorithmic}
\Require Mixing matrix $\Pi$, the number of epochs $K$, initialization $x^i_1, i\in\mathcal{V}$, local dataset $\mathcal{D}_i$, step size function $h(\cdot)$ (defined specifically below), $L_0, L_1, \rho, \sigma$ 
\Ensure $x^i_K, i\in\mathcal{V}$
\For{from 1 to $K$}
    \For{each agent $i$}
        \State $\alpha_k=h(L_0,L_1,\|\nabla f^i(\bar{x}_k)\|,\rho, \sigma)$ \Comment{Step size varies between convex and nonconvex}
        \State Calculate the stochastic gradient $g^i_k=\nabla \mathcal{F}^i(x^i_k;\xi_i)$
        \State $x^i_{k+1/2}=\sum_{j\in Nb(i)}\pi_{ij}x^j_k$
        \State $x^i_{k+1}=x^i_{k+1/2}-\alpha_k g^i_k$
    \EndFor
\EndFor
\end{algorithmic}
\end{algorithm}
\begin{assumption}\label{assum_3} There exist $\sigma, \delta > 0$ such that
    a) The variance of stochastic gradient in each agent is bounded: $\|\nabla \mathcal{F}^i(x;\xi)-\nabla f^i(x)\|\leq \sigma, i=1,2,...,N$; b) The gradient diversity is uniformly bounded, i.e., $\frac{1}{N}\sum_{i=1}^N\|\nabla f^i(x)-\nabla F(x)\|^2\leq \delta^2, \forall x, i=1,2,...,N$.
\end{assumption}
The above assumptions have been used in~\cite{li2019convergence,zhang2013communication,stich2018local,stich2018sparsified,yu2019parallel,crawshaw2025complexity,koloskova2020unified}. Though the bounded gradient diversity assumption has been relaxed in~\cite{koloskova2020unified}, in the recent work on federated $(L_0,L_1)$-smooth optimization~\cite{li2024improved}, it is still leveraged for convergence analysis. We also note that such an assumption remains commonplace in the decentralized optimization literature, as it offers a tractable analytical framework and intuitive interpretation, especially in early-stage research. It helps characterize how local heterogeneity impacts global convergence, and thus facilitates understanding of fundamental dynamics before more advanced tools are introduced.
% The smoothness in Assumption~\ref{assum_3} is quite generic in decentralized learning algorithms as it provides the guarantee of loss descent for the analysis. 
In this context, $\sigma$ signifies the upper bound of variances of stochastic gradients for local agents, while $\delta^2$ quantifies the gradient dissimilarity between each agent's local objective loss $f^i(x)$, given different data distributions. We now present the main result in the following for convex functions.
\begin{theorem}\label{theorem_3} (Convex, stochastic)
    Let Assumptions~\ref{assumption_1}, \ref{definition_1} and~\ref{assum_3} hold and $\{\bar{x}_k\}$ be the iterates produced by Algorithm~\ref{alg:dsgd} with step size $\alpha_k=\textnormal{min}\{\hat{\alpha}, \frac{1}{\textnormal{max}_i\{\|\nabla f^i(\bar{x}_k)\|\}},\frac{1}{\textnormal{max}_i\{\sqrt{24L_1\|\nabla f^i(\bar{x}_k)\|}\}}\}$, where $\hat{\alpha}=\textnormal{min}\{\frac{1-\sqrt{\rho}}{\textnormal{max}\{20L_0,\sqrt{24L_0}\}},\frac{1}{4C_1}, \frac{1}{\sqrt{K}}\}$. Suppose that $f^i:\mathbb{R}^d\to\mathbb{R}$ is a twice continuously differentiable convex function. Let $x^*$ be an arbitrary solution to the problem in Eq.~\ref{eq_12}. Then, $\frac{1}{K}\sum_{k=1}^K\mathbb{E}[F(\bar{x}_k)-F^*]\leq \epsilon$ for any given $0\leq\epsilon$ whenever, 
    $
    K\geq 2\textnormal{max}\{\frac{3R^2}{\epsilon}+\frac{(288L_1^2+216L_1)(\sigma^2+\delta^2)}{(1-\sqrt{\rho})^2\epsilon}+\frac{6\sigma^2}{N\epsilon}, \frac{64R^4}{\epsilon^2}+\frac{53L_0^{4/3}(\sigma^2+\delta^2)^{2/3}}{(1-\sqrt{\rho})^{4/3}\epsilon^{2/3}}+\frac{288L_0(\sigma^2+\delta^2)}{(1-\sqrt{\rho})^2\epsilon}+\frac{64\sigma^4}{N^2\epsilon^2}\},
    $
    where $R:=\|\bar{x}_1-x^*\|, C_1:=1-5L_1-\frac{240L_1^3+180L_1^2}{(1-\sqrt{\rho})^2}>0$. 
\end{theorem}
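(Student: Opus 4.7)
The plan is to start from the standard decentralized-SGD one-step recursion on $\|\bar{x}_{k+1}-x^*\|^2$. Averaging the local update gives $\bar{x}_{k+1}=\bar{x}_k-\alpha_k\bar{g}_k$, where $\bar{g}_k=\frac{1}{N}\sum_i g^i_k$, since $\Pi$ is doubly stochastic. Expanding and taking conditional expectation using unbiasedness of $g^i_k$ yields
\begin{equation*}
\mathbb{E}\|\bar{x}_{k+1}-x^*\|^2=\|\bar{x}_k-x^*\|^2-2\alpha_k\langle\bar{\nabla f}_k,\bar{x}_k-x^*\rangle+\alpha_k^2\mathbb{E}\|\bar{g}_k\|^2,
\end{equation*}
where $\bar{\nabla f}_k:=\frac{1}{N}\sum_i\nabla f^i(x^i_k)$. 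Convexity of $F$ then provides $F(\bar{x}_k)-F^*\leq\langle\nabla F(\bar{x}_k),\bar{x}_k-x^*\rangle$, so the inner-product term can be split as $\langle\bar{\nabla f}_k,\bar{x}_k-x^*\rangle=\langle\nabla F(\bar{x}_k),\bar{x}_k-x^*\rangle+\langle\bar{\nabla f}_k-\nabla F(\bar{x}_k),\bar{x}_k-x^*\rangle$, with the second term absorbed by Young's inequality at the price of a consensus-error contribution.

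Two ingredients must then be controlled. First, I would derive the stochastic second moment bound $\mathbb{E}\|\bar{g}_k\|^2\leq\frac{\sigma^2}{N}+\|\bar{\nabla f}_k\|^2$, then expand $\|\bar{\nabla f}_k\|^2$ against $\|\nabla F(\bar{x}_k)\|^2$ plus deviation terms, invoking Assumption 2 ($(L_0,L_1)$-smoothness) to bound $\|\nabla f^i(x^i_k)-\nabla f^i(\bar{x}_k)\|\leq(L_0+L_1\|\nabla f^i(\bar{x}_k)\|)\|x^i_k-\bar{x}_k\|$ and Assumption 3(b) to control $\|\nabla f^i(\bar{x}_k)-\nabla F(\bar{x}_k)\|^2\leq\delta^2$. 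Second, I need a recursive bound on the consensus error $E_k:=\frac{1}{N}\sum_i\mathbb{E}\|x^i_k-\bar{x}_k\|^2$. Using Assumption 1 and the mixing property $\|\Pi-\frac{1}{N}\mathbf{1}\mathbf{1}^\top\|\leq\sqrt{\rho}$, together with the update rule, a routine expansion yields $E_{k+1}\leq\sqrt{\rho}\,E_k+\frac{\alpha_k^2}{1-\sqrt{\rho}}\,\mathbb{E}\|g^i_k-\bar{g}_k\|^2$ type inequalities, and summing a geometric series gives $\sum_k E_k$ in terms of $\sigma^2,\delta^2,\|\nabla F(\bar{x}_k)\|^2$ and the spectral gap $1-\sqrt{\rho}$. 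Here the factor $(L_0+L_1\|\nabla f^i(\bar{x}_k)\|)^2$ coming from relaxed smoothness is precisely why the clipping coordinates $1/\max_i\|\nabla f^i(\bar{x}_k)\|$ and $1/\max_i\sqrt{24L_1\|\nabla f^i(\bar{x}_k)\|}$ are placed inside $\alpha_k$: they ensure $\alpha_k L_1\|\nabla f^i(\bar{x}_k)\|\leq 1$ and $\alpha_k^2 L_1\|\nabla f^i(\bar{x}_k)\|\leq 1/24$, taming the gradient-dependent smoothness uniformly across agents.

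Combining the descent inequality with the second-moment and consensus-error bounds, choosing $\hat{\alpha}$ small enough so that the coefficient $C_1=1-5L_1-\frac{240L_1^3+180L_1^2}{(1-\sqrt{\rho})^2}$ appearing in front of the $\|\nabla F(\bar{x}_k)\|^2$ terms remains positive, I would obtain an inequality of the form
\begin{equation*}
\mathbb{E}\|\bar{x}_{k+1}-x^*\|^2\leq\mathbb{E}\|\bar{x}_k-x^*\|^2-2\alpha_k\mathbb{E}[F(\bar{x}_k)-F^*]+\alpha_k^2\Phi_k+\alpha_k^3\Psi_k,
\end{equation*}
where $\Phi_k,\Psi_k$ collect the $\sigma^2/N$, $(\sigma^2+\delta^2)/(1-\sqrt{\rho})^2$, and higher-order $L_0,L_1$ contributions. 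Telescoping from $k=1$ to $K$, dividing by $2\alpha_k$, and using $\hat{\alpha}\leq 1/\sqrt{K}$, the leading dominant terms decay like $R^2/(\alpha K)$, $\alpha\sigma^2/N$, and consensus-related $\alpha(\sigma^2+\delta^2)/(1-\sqrt{\rho})^2$. Balancing $\alpha\asymp 1/\sqrt{K}$ vs $\alpha\asymp 1/K^{1/3}$ for the $L_0^{4/3}$ consensus residual gives exactly the two competing complexity terms in the theorem statement, the first dominant in the statistical regime and the second in the consensus-limited regime.

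The main obstacle is handling the gradient-dependent smoothness inside the consensus recursion without resorting to a bounded-gradient assumption: the factor $L_0+L_1\|\nabla f^i(\bar{x}_k)\|$ couples the per-iterate progress to the current gradient norm, and the clipping denominator $\max_i\|\nabla f^i(\bar{x}_k)\|$ must be propagated carefully so that $\alpha_k^2 L_1\|\nabla f^i(\bar{x}_k)\|^2$ and similar cross terms either collapse to constants or get absorbed into $\|\nabla F(\bar{x}_k)\|^2$ via $L_1$-dependent Young steps. A second subtle point is ensuring the exponential descent term from Lemma~\ref{lemma_1} can be replaced by an at-most-quadratic bound, which is guaranteed precisely by the clipping choice $\alpha_k\|\nabla f^i(\bar{x}_k)\|\leq 1/L_1$ (so $\psi(L_1\alpha_k\|g\|)\lesssim L_1^2\alpha_k^2\|g\|^2$); losing track of these constants is where the analysis can easily blow up into the large polynomial prefactors $288L_1^2+216L_1$ and $240L_1^3+180L_1^2$ appearing in the statement.
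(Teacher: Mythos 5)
Your overall skeleton matches the paper's: the recursion on $\|\bar{x}_{k+1}-x^*\|^2$ with the cross term vanishing by unbiasedness, the $\sigma^2/N$ variance bound, a summed consensus-error lemma driven by the spectral gap, and a final telescoping with $\hat{\alpha}\leq 1/\sqrt{K}$. However, there are two substantive gaps. First, you attribute the two branches of the $\max$ in the complexity bound to a step-size balancing between $\alpha\asymp 1/\sqrt{K}$ and $\alpha\asymp 1/K^{1/3}$, but that is not how they arise. The paper splits the analysis into the two regimes $\textnormal{max}\{\|\nabla f^i(\bar{x}_k)\|,\|\nabla f^i(x^i_k)\|\}\geq L_0/L_1$ and $<L_0/L_1$; in the large-gradient regime the clipping coordinate $\alpha_k\leq 1/\textnormal{max}_i\|\nabla f^i(\bar{x}_k)\|$ converts every $\alpha_k\|\nabla f^i(\bar{x}_k)\|$ factor into a constant, so the coefficient of $F(\bar{x}_k)-F^*$ becomes $\alpha_k$-free and the rate is $1/K$ with only $L_1$-dependence (the first branch), while in the small-gradient regime the smoothness factor collapses to $2L_0$ and the standard $1/\sqrt{K}$ rate with $L_0$-dependence emerges (the second branch). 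Without this case split you cannot obtain the $1/K$ behavior of the first branch, nor the condition $C_1>0$, which comes precisely from making the coefficient $\frac{(8L_1^2+6L_1)60\alpha_kL_1}{(1-\sqrt{\rho})^2}+10L_1\alpha_k-2\alpha_k$ at most $-\frac{1}{2}$ in the large-gradient case.

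Second, your treatment of the inner-product term is more fragile than the paper's. You propose $\langle\bar{\nabla f}_k,\bar{x}_k-x^*\rangle=\langle\nabla F(\bar{x}_k),\bar{x}_k-x^*\rangle+\langle\bar{\nabla f}_k-\nabla F(\bar{x}_k),\bar{x}_k-x^*\rangle$ with the bias absorbed by Young's inequality; this injects a $+\lambda\alpha_k\|\bar{x}_k-x^*\|^2$ term into the recursion, and since $\|\bar{x}_k-x^*\|$ is not shown to be nonincreasing in the stochastic convex setting, the resulting multiplicative factor $\prod_k(1+\lambda\alpha_k)$ is not controlled for free. The paper instead decomposes per agent, $-\frac{2}{N}\sum_i[\langle\bar{x}_k-x^i_k,\nabla f^i(x^i_k)\rangle+\langle x^i_k-x^*,\nabla f^i(x^i_k)\rangle]$, and applies per-agent convexity together with the $(L_0,L_1)$ descent inequality under the constraint $\|\bar{x}_k-x^i_k\|\leq 1/L_1$, which yields $-2(F(\bar{x}_k)-F^*)$ plus consensus-error terms with no extraneous $\|\bar{x}_k-x^*\|^2$ contribution. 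You would either need to adopt that decomposition or supply the missing argument controlling the accumulated Young penalty.
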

Theorem~\ref{theorem_3} characterizes the convergence behavior of DSGD for convex $(L_0,L_1)$-smooth functions. Specifically, the first term on the right hand side of the above lower complexity bound shows that if $\|\nabla f^i(\bar{x}_k)\|\geq \frac{L_0}{L_1}$, which likely appears in the early phase of the optimization, DSGD converges with a sublinear rate that is faster than the one reported in~\cite{koloskova2020unified,lian2017can,jiang2017collaborative}. However, when the method approaches the solution ($\|\nabla f^i(\bar{x}_k)\|< L_0/L_1$), the convergence slows down to the standard sublinear rate shown in Theorem 2 in~\cite{koloskova2020unified}. $L_1=0$ degenerates $(L_0,L_1)$-smoothness into the classical one such that our worst-case complexity bound resembles the one in~\cite{koloskova2020unified}. 
We also notice that the condition that $C_1$ needs to satisfy may be strong in some sense. Specifically, to make sure the condition $0<\rho<1$ holds, $L_1$ needs to be a relatively smaller value, resulting in a larger $L_0/L_1$ value. One may question about the applicability of our conclusion. However, $C_1$ is due to the scenario of $\|\nabla f^i(\bar{x}_k)\|\geq L_0/L_1$, where the gradient norm should relatively be larger. Thus, a smaller $L_1$ indeed ensures the validity of the scenario. 
% For illustrations, we also provide the range of $L_1$ values given the $\rho$ values in SM.
We next state the theorem for nonconvex functions. 
\begin{theorem}\label{theorem_4} (Nonconvex, stochastic)
    Let Assumptions~\ref{assumption_1}, \ref{definition_1} and~\ref{assum_3} hold and $\{\bar{x}_k\}$ be the iterates produced by Algorithm~\ref{alg:dsgd} with step size $\alpha_k=\textnormal{min}\{\hat{\alpha},\frac{1}{2(\frac{3}{2(1-\sqrt{\rho})^2}+5L_0+4L_1\|\nabla F(\bar{x}_k)\|)}, \frac{1}{(L_0+L_1\textnormal{max}_i\{\|\nabla f^i(\bar{x}_k)\|\})^2}\}$, where $\hat{\alpha}=\textnormal{min}\{\frac{1}{36L_1\sigma},\frac{1}{\sqrt{K+1}}\}$. Suppose that $f^i:\mathbb{R}^d\to\mathbb{R}$ is an $(L_0,L_1)$-smooth nonconvex function. Let $x^*$ be an arbitrary solution to the problem in Eq.~\ref{eq_12} and let $F_1:=F(\bar{x}_1)-F^*$. Then, $\textnormal{min}_{1\leq k\leq K}\mathbb{E}[\|\nabla F(\bar{x}_k)\|]\leq \epsilon$ for any given $0\leq \epsilon\leq F(\bar{x}_1)$ whenever 
    $
    K\geq C\textnormal{max}\{\frac{4C}{\epsilon^4}, \frac{144L_1\sigma}{\epsilon^2},\frac{64L_1}{\epsilon}\}$ where $C=F_1+\frac{3(\sigma^2+\delta^2)}{2(1-\sqrt{\rho})^2} + \frac{(5L_0+2L_1\sigma)\sigma^2}{N}$.
\end{theorem}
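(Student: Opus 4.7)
The plan is to establish a descent inequality for $\mathbb{E}[F(\bar{x}_k)]$ via Lemma~\ref{lemma_1}, then telescope and invert. I would first apply Lemma~\ref{lemma_1} to the pair $(\bar{x}_k,\bar{x}_{k+1})$ with $\bar{x}_{k+1}=\bar{x}_k-\alpha_k\bar{g}_k$, where $\bar{g}_k:=\frac{1}{N}\sum_{i=1}^N g^i_k$. The third component of the step size, $\alpha_k\leq 1/(L_0+L_1\textnormal{max}_i\|\nabla f^i(\bar{x}_k)\|)^2$, combined with $\hat{\alpha}\leq 1/(36L_1\sigma)$, is calibrated so that $L_1\|\bar{x}_{k+1}-\bar{x}_k\|\leq 1$ holds in expectation, allowing $\psi(L_1\|\bar{x}_{k+1}-\bar{x}_k\|)/L_1^2$ to be upper bounded by a constant multiple of $\|\bar{x}_{k+1}-\bar{x}_k\|^2$. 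The exponential penalty in Lemma~\ref{lemma_1} is thereby converted into a tractable quadratic one whose coefficient $L_0+L_1\|\nabla F(\bar{x}_k)\|$ is precisely what the second component of $\alpha_k$ is designed to absorb.

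Next, taking conditional expectation given the filtration $\mathcal{F}_k$ and exploiting that $\alpha_k$ is $\mathcal{F}_k$-measurable, I use $\mathbb{E}[\bar{g}_k\mid\mathcal{F}_k]=\frac{1}{N}\sum_i\nabla f^i(x^i_k)$ together with $\mathbb{E}[\|\bar{g}_k\|^2\mid\mathcal{F}_k]\leq \|\frac{1}{N}\sum_i\nabla f^i(x^i_k)\|^2+\sigma^2/N$. The cross term $\langle\nabla F(\bar{x}_k),\frac{1}{N}\sum_i\nabla f^i(x^i_k)\rangle$ splits into the main descent piece $\|\nabla F(\bar{x}_k)\|^2$ plus a residual that is bounded through Assumption~\ref{definition_1} by $(L_0+L_1\|\nabla f^i(\bar{x}_k)\|)\|x^i_k-\bar{x}_k\|$ for each $i$ and then absorbed into $\|\nabla F(\bar{x}_k)\|^2$ and the consensus error via Young's inequality. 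The outcome is $\mathbb{E}[F(\bar{x}_{k+1})]\leq\mathbb{E}[F(\bar{x}_k)]-c_1\alpha_k\mathbb{E}\|\nabla F(\bar{x}_k)\|^2+c_2\alpha_k^2\sigma^2/N+c_3\mathbb{E}\bigl[\tfrac{1}{N}\sum_i\|x^i_k-\bar{x}_k\|^2\bigr]$.

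The consensus error is controlled by stacking the local iterates into $X_k\in\mathbb{R}^{N\times d}$ and using Assumption~\ref{assumption_1}, which yields the recursion $\|X_{k+1}-\bar{X}_{k+1}\|_F\leq\sqrt{\rho}\|X_k-\bar{X}_k\|_F+\alpha_k\|G_k-\bar{G}_k\|_F$. Unrolling and invoking Assumption~\ref{assum_3} gives $\mathbb{E}\|X_k-\bar{X}_k\|_F^2\leq \tfrac{\alpha_k^2}{(1-\sqrt{\rho})^2}\cdot N(\sigma^2+\delta^2+\textnormal{gradient-dependent terms})$. Substituting into the descent inequality and summing over $k=1,\ldots,K$ produces $F_1\geq c_1\hat{\alpha}\sum_{k=1}^K\mathbb{E}\|\nabla F(\bar{x}_k)\|^2-K\hat{\alpha}^2 C'$, where $C'$ collects precisely $\tfrac{(\sigma^2+\delta^2)}{(1-\sqrt{\rho})^2}$ and $\tfrac{(L_0+L_1\sigma)\sigma^2}{N}$, i.e., the constants that form $C$ in the statement. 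Choosing $\hat{\alpha}=1/\sqrt{K+1}$ and invoking Jensen's inequality $(\textnormal{min}_k\mathbb{E}\|\nabla F(\bar{x}_k)\|)^2\leq\textnormal{min}_k\mathbb{E}\|\nabla F(\bar{x}_k)\|^2$ delivers the $\mathcal{O}(C/\sqrt{K})$ rate on the squared gradient norm, hence the $4C/\epsilon^4$ term; the remaining two terms in the $\max$ arise from enforcing $\hat{\alpha}\leq 1/(36L_1\sigma)$ (giving $144L_1\sigma/\epsilon^2$) and the deterministic caps on $\alpha_k$ (giving $64L_1/\epsilon$).

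The principal obstacle is the tight coupling among the adaptive step size, the exponential factor in Lemma~\ref{lemma_1}, and the stochastic noise. The last two components of $\alpha_k$ depend on $\|\nabla F(\bar{x}_k)\|$ and $\textnormal{max}_i\|\nabla f^i(\bar{x}_k)\|$ along the random trajectory, so the key technical step is ensuring these caps do not drag $\alpha_k$ below the scheduled $\hat{\alpha}$ on the typical path. This requires a regime-based case analysis separating $\|\nabla F(\bar{x}_k)\|\geq L_0/L_1$ from its complement, in the spirit of Lemma~\ref{lemma_3} and Theorem~\ref{theorem_3}, and propagating the stochastic perturbation through the smoothness coefficient $L_0+L_1\|\cdot\|$ is precisely what generates the $L_1\sigma$ scale inside $\hat{\alpha}$ and, in turn, the $L_1\sigma/\epsilon^2$ term in the complexity bound.
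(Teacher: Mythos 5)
Your overall architecture matches the paper's: a quadratic descent inequality whose smoothness coefficient scales as $L_0+L_1\|\nabla F(\bar{x}_k)\|$, consensus-error control through the mixing-matrix recursion of Lemma~\ref{lemma_8}, telescoping, and step-size tuning that yields the three terms in the $\max$. The paper reaches the descent inequality by a slightly different route — Taylor's theorem with the integral Hessian remainder along the segment, plus Lemma~\ref{lemma_10} to bound $\|\nabla F(l(t))\|\leq 4(L_0/L_1+\|\nabla F(\bar{x}_k)\|)$ along the path, giving the coefficient $5L_0+4L_1\|\nabla F(\bar{x}_k)\|$ — rather than by linearizing the exponential in Lemma~\ref{lemma_1}; both are legitimate, but note that your linearization must hold \emph{pathwise}, not merely ``in expectation'' as you write: you need $\|\bar{x}_{k+1}-\bar{x}_k\|\leq 1/L_1$ almost surely, which follows from the step-size caps only because Assumption~\ref{assum_3}(a) bounds the noise deterministically ($\|g^i_k-\nabla f^i(x^i_k)\|\leq\sigma$), so that $\|\bar{g}_k\|\leq\|\tfrac{1}{N}\sum_i\nabla f^i(x^i_k)\|+\sigma$ pointwise. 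You should state this explicitly.

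The one step in your plan that would fail as written is the treatment of the cross term. You propose to bound $\langle\nabla F(\bar{x}_k),\tfrac{1}{N}\sum_i\nabla f^i(x^i_k)\rangle$ by Young's inequality, absorbing the residual into $\|\nabla F(\bar{x}_k)\|^2$ and the consensus error, which leaves you with only $-c_1\alpha_k\|\nabla F(\bar{x}_k)\|^2$ as a negative term. But the consensus bound you then substitute (the analogue of Eq.~\ref{eq_43}) contains a term proportional to $\tfrac{\alpha_k^2}{(1-\sqrt{\rho})^2}\|\tfrac{1}{N}\sum_i\nabla f^i(x^i_k)\|^2$, and the second-moment term $T_2$ likewise contributes $(5L_0+4L_1\|\nabla F(\bar{x}_k)\|)\alpha_k^2\|\tfrac{1}{N}\sum_i\nabla f^i(x^i_k)\|^2$. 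These are measured in $\|\tfrac{1}{N}\sum_i\nabla f^i(x^i_k)\|^2$, not $\|\nabla F(\bar{x}_k)\|^2$, so they cannot be absorbed into your negative term without reintroducing the consensus error and circling back. The paper avoids this by using the polarization identity $\langle a,b\rangle=\tfrac{1}{2}(\|a\|^2+\|b\|^2-\|a-b\|^2)$ in Lemma~\ref{lemma_11}, which \emph{retains} a negative $-\tfrac{1}{2}\alpha_k\|\tfrac{1}{N}\sum_i\nabla f^i(x^i_k)\|^2$ that is exactly what cancels those contributions, under the step-size condition $\tfrac{1}{2}-\tfrac{3}{2(1-\sqrt{\rho})^2}\alpha_k-(5L_0+4L_1\|\nabla F(\bar{x}_k)\|)\alpha_k\geq 0$ (the second component of $\alpha_k$ in the statement). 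You need to replace Young's inequality with this identity, or otherwise retain that negative term. Finally, the regime split at $\|\nabla F(\bar{x}_k)\|\gtrless L_0/L_1$ that you emphasize is not where the paper does its case analysis here; the only case split occurs inside the bound on the noise term (Lemma~\ref{lemma_12}, at the threshold $L_0/L_1+\sigma$), and the final inversion is a direct telescoping with $\hat{\alpha}=1/\sqrt{K+1}$ as you describe.
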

    We first remark the difference of complexity bounds between ours and those in~\cite{koloskova2020unified,yuan2022revisiting}. In most previous works, when they investigate complexity bounds for nonconvex objective functions, a more generic metric is $\frac{1}{K}\sum_{k=1}^K\mathbb{E}[\|\nabla F(\bar{x}_k)\|^2]\leq \epsilon$ such that it is typically $\mathcal{O}(1/\epsilon^2)$. However, when it is converted to $\mathbb{E}[\|\nabla F(\bar{x}_k)\|]\leq \epsilon$, the complexity still remains $\mathcal{O}(1/\epsilon^4)$. This implies that our worst-case bound matches the best available result in previous works. Additionally, when $L_1=0$, Algorithm~\ref{alg:dsgd} converges to the desired accuracy $\epsilon$ after $K=\mathcal{O}(4C^2/\epsilon^4)$ iterations. We also notice that our complexity bound also matches the one in its centralized counterpart~\cite{zhang2019gradient} for $(L_0,L_1)$-smooth nonconvex functions, without the gradient norm to be bounded globally. One can also impose the bounded gradient assumption to achieve the similar convergence rate, but its absolute constants can be arbitrarily large, making the corresponding algorithm arbitrarily slower than $(L_0,L_1)$-DSGD.

\section{Numerical Results}\label{results}
In this section, we empirically investigate $(L_0,L_1)$-smoothness in decentralized settings. Our primary focus is to illuminate the complexity bounds through theoretical analysis under this new smoothness assumption, rather than to optimize empirical performance. Accordingly, our interest lies in examining the linear relationship between smoothness parameters and the gradient norm, rather than in reporting testing accuracy.

First, we leverage logistic regression (which represents convex case) for a binary classification problem for each agent such that 
% \begin{equation}
%     F(x)=\frac{1}{N}\sum_{i=1}^Nf^i(x),
% \end{equation}
$f^i(x)=\textnormal{log}(1+\textnormal{exp}(ya_i^\top x)), \;a_i\in\mathbb{R}^d, \;y\in\{0,1\}$. As Example 1.6 in~\cite{gorbunov2024methods} shows, each logistic regression loss function $f^i$ is $(L_0,L_1)$-smooth, indicating that $F$ is also $(L_0,L_1)$-smooth. However, note that the detailed derivation for the exact constants $L_0$ and $L_1$ is fairly complicated and relies heavily on the relationship between different agent model parameters $a_i$. In this case, to make it feasible, we resort to the Hessian norm (curvature in the plots) and numerically estimate the relationship between the Hessian norm and the gradien norm, as done in~\cite{gorbunov2024methods,vankov2024optimizing}. Though in centralized settings, smoothness has empirically been shown to have the linear dependency on the gradient norm, whether it exhibits the analogous trend in a decentralized setting remains unknown. Therefore, we run $(L_0,L_1)$-DSGD and vanilla DSGD over a real dataset from LIBSVM~\cite{chang2011libsvm} - a9a, with 5 agents in a fully-connected topology. The results are presented in Figure~\ref{fig:comparison_lr}, showing that $F(\bar{x})$ has a clear linear dependence of the Hessian norm on the gradient norm by using gradient clipping, as suggested by the relaxed smoothness condition defined in Eq.~\ref{eq_3}.  
To check the behavior in nonconvex functions, we replace each agent's model with a multilayer perceptron (MLP). Similarly, based on Figure~\ref{fig:comparison_nn}, we observe the smoothness dependency on the gradient norm for the averaged model, though the nonconvexity of the model results in a looser trend of linearity between them. In Figures~\ref{fig:comparison_lr} and~\ref{fig:comparison_nn}, we also observe that without clipping, the smoothness dependency on the gradient norm is weaker than that when using gradient clipping. To validate our theoretical claims in a wider spectrum of datasets (including more complicated ones, e.g., CIFAR10 and CIFAR100), additional results are presented in SM, including training loss, testing accuracy, and performance of individual agents and ring topology.

When examining Figure 2 in~\cite{gorbunov2024methods}, one interesting observation is that though the authors showed smoothness dependency on the gradient norm in a toy scenario logistic regression, the gradient norm increases along with the number of epochs. This seemingly implies that the algorithm exhibits divergence. Likewise, our result in Figure~\ref{fig:comparison_cnn} using a more complicated 2-layer Convolutional Neural Network (CNN) and MNIST dataset~\cite{xiao2017fashion} similarly shows the gradient norm grows when the number of epochs increases. This is likely attributed to the already well-trained model after only one or a few epochs, also possibly signaling the overfitting, but not divergence (through the plots of loss and accuracy in SM). However, with clipping the dependency of smoothness on gradient norm is more significantly linear-like compared to adopting vanilla DGD, emphasizing again the $(L_0,L_1)$-smoothness in complex models.
\begin{figure}[h]
    \centering    \includegraphics[width=0.8\linewidth]{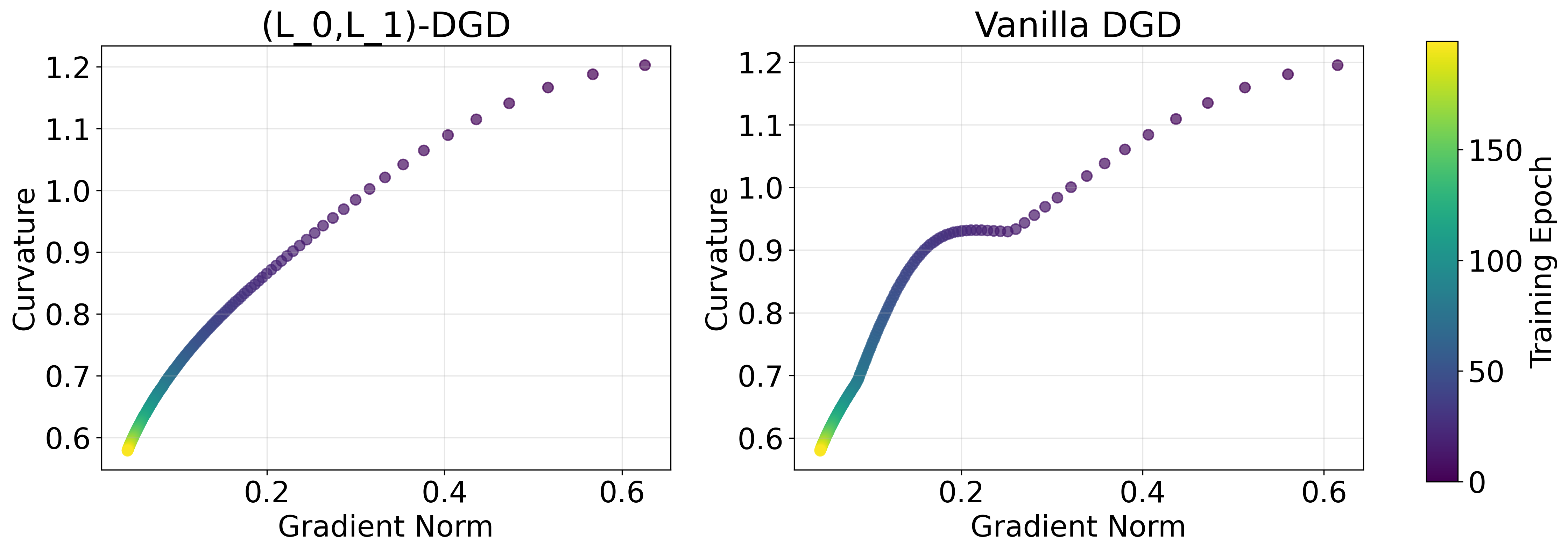}
    \caption{Smoothness vs. gradient norm for decentralized logistic regression with a9a dataset for averaged model}
    \label{fig:comparison_lr}
\end{figure}

\begin{figure}[h]
    \centering
\includegraphics[width=0.8\linewidth]{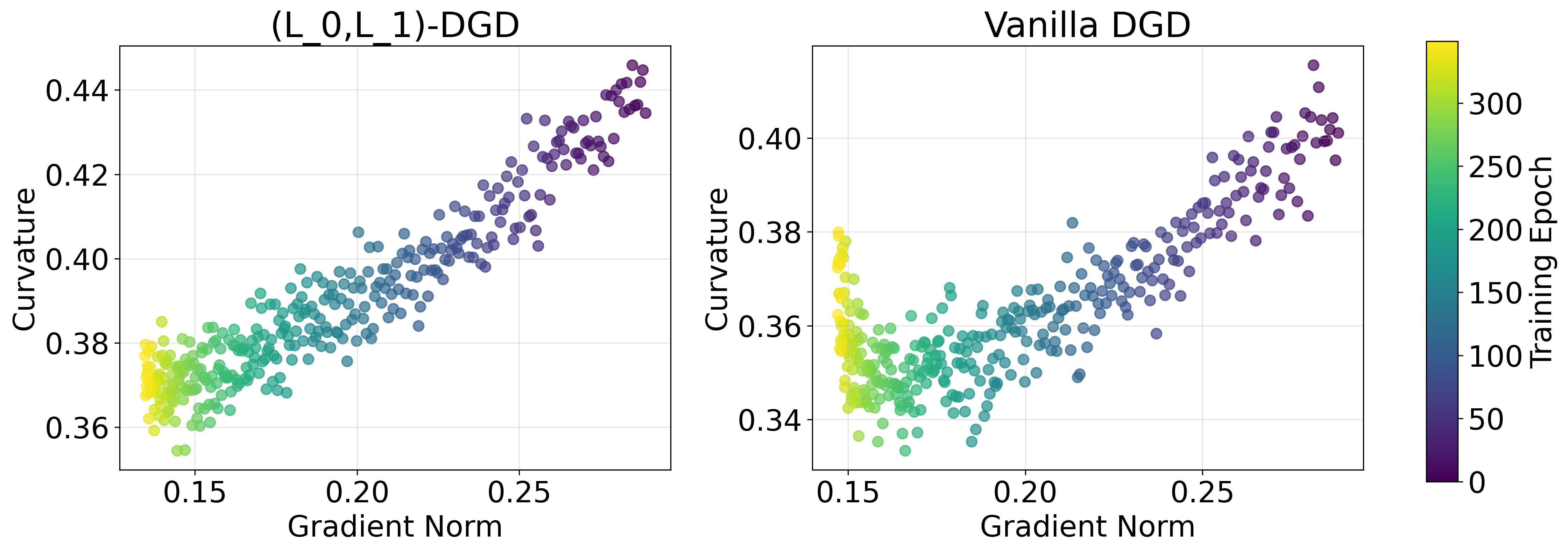}
    \caption{Smoothness vs. gradient norm for decentralized neural networks with a9a dataset for averaged model}
    \label{fig:comparison_nn}
\end{figure}

\begin{figure}[h]
    \centering
\includegraphics[width=0.8\linewidth]{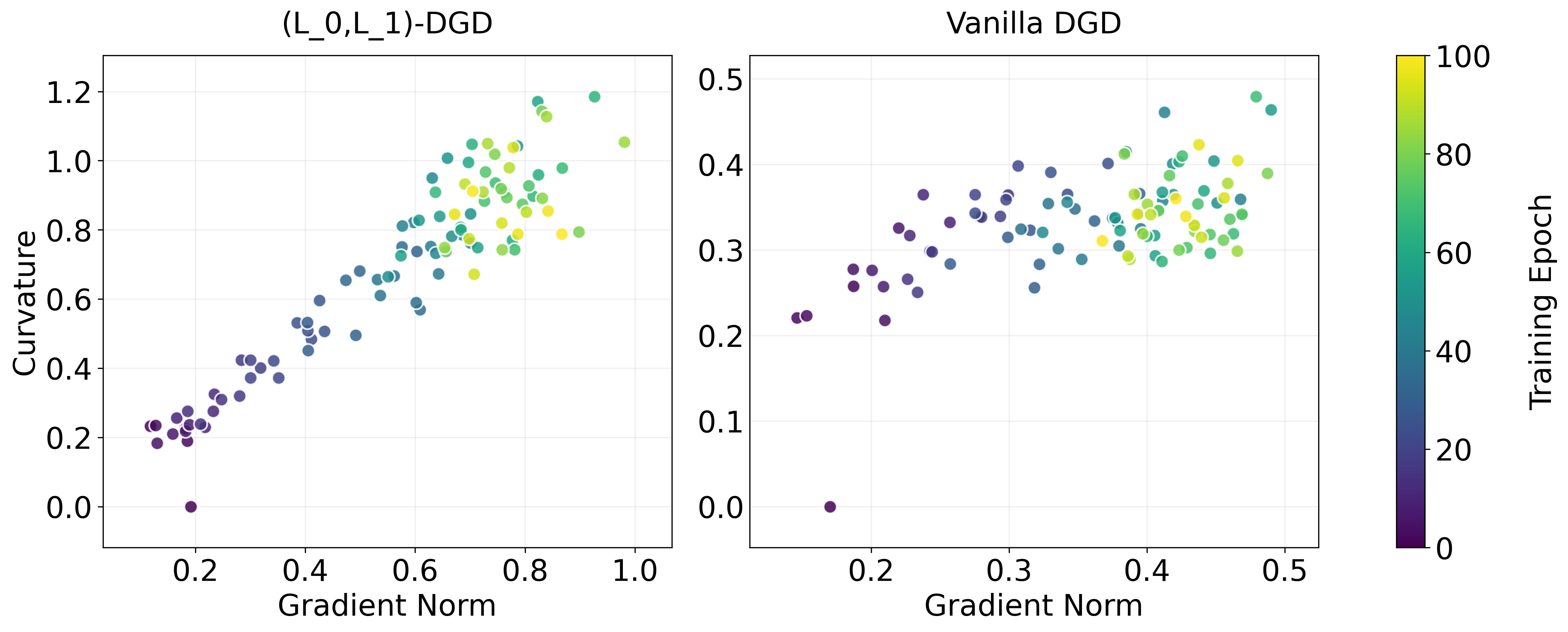}
    \caption{Smoothness vs. gradient norm for decentralized CNN with MNIST dataset for averaged model}
    \label{fig:comparison_cnn}
\end{figure}
\textbf{Limitations.} One of limitation in this work is the missing analysis of comparison between FL and decentralized learning. Though FL can be regarded as one special scenario of decentralized learning, the integration of $(L_0,L_1)$-smoothness into the existing theoretical analysis in FL has led to new and improved results. Another limitation is our theoretical claims have not yet been empirically validated through larger models such as Transformer architectures and even LLMs~\cite{ghiasvand2025decentralized}. However, the present work marks the first step towards the relaxation of traditional smoothness assumption especially in decentralized learning, so the primary focus is on establishing complexity bounds for different function classes. The validation in larger models remains a critical part of our future work.

% \textcolor{blue}{This section presents results in convex and nonconvex functions using logistic regression and deep nets with real datasets.}
% \begin{figure}
%     \centering
%     \includegraphics[width=0.8\linewidth]{Figures/s_g_toy.PNG}
%     \caption{Local Smoothness Constant vs. Gradient Norm for Decentralized Logistic Regression with a Synthetic Dataset}
%     \label{fig:smooth_gradient_toy}
% \end{figure}
% \begin{figure}
%     \centering
%     \includegraphics[width=0.8\linewidth]{Figures/s_g_iris.PNG}
%     \caption{Local Smoothness Constant vs. Gradient Norm for Decentralized Logistic Regression with IRIS Dataset}
%     \label{fig:smooth_gradient_ir}
% \end{figure}
% \begin{figure}
%     \centering
%     \includegraphics[width=0.8\linewidth]{Figures/s_g_breast_cancer.PNG}
%     \caption{Local Smoothness Constant vs. Gradient Norm for Decentralized Logistic Regression with Breast Cancer Dataset}
%     \label{fig:smooth_gradient_bc}
% \end{figure}
\section{Conclusion and Impact Statement}\label{conclusion}
This work investigates decentralized gradient methods for $(L_0,L_1)$-smooth optimization problems in deterministic and stochastic settings. In deterministic setting, we have proposed the $(L_0,L_1)$-DGD and established the improved complexity bound $\mathcal{O}(\textnormal{max}\{L_1R\;\text{ln}(F_0/\epsilon),L_0R^2/\epsilon\})$ for convex function with the clipping step size, which resembles the nearly-optimal step size. In the nonconvex setting, we have also achieved the best-known complexity bound for DGD with $\mathcal{O}(\textnormal{max}\{L_0F_0/\epsilon^2,L_1F_0/\epsilon\})$. When extending to the stochastic setting, we have presented the formal convergence guarantee and provided the complexity bounds with $(L_0,L_1)$-smoothness for convex and nonconvex functions, matching the best available bounds with a properly selected step size. To the best of our knowledge, this is the first theoretical result to be established for DSGD. Our extensive empirical results have validated the smoothness dependency on the gradient norm in the decentralized setting. An interesting open question is whether we can improve the complexity bounds for the stochastic setting. Additionally, it remains unknown to us if what the exact bounds are for adaptive gradient descent in decentralized settings. 
The broader impact of this work includes facilitating the theoretical underpinnings of relaxing classical smoothness assumption in decentralized setting and providing constructive insights for designing new optimization algorithms. Particularly, the general analysis framework provided in this work sheds light on how convergence guarantee can be achieved under the relaxed $(L_1,L_0)$-smoothness assumption and the difference compared to existing works, either centralized or decentralized optimization algorithms.

%%%%%%%%%%%%%%%%%%%%%%%%%%%%%%%%%%%%%%%%%%%%%
\clearpage
\bibliographystyle{unsrt}
\bibliography{references}
\clearpage

%%%%%%%%%%%%%%%%%%%%%%%%%%%%%%%%%%%%%%%%%%%%%%%%%%%%%%%%%%%%
\appendix

\section{Technical Appendices and Supplementary Material}
In this section, we present additional analysis and results as well as missing proof in the main contents. We also include the details of datasets and experimental setup for completeness.
\subsection{Additional analysis in Section~\ref{deterministic}}\label{additional_analysis_deterministic}
\subsubsection{Proof of Lemma~\ref{lemma_1}}
\begin{proof}
    Leveraging the similar proof techniques from the proof for Lemma 2.5 in~\cite{vankov2024optimizing} and applying it to each local loss function $f^i$ completes our proof.
\end{proof}
\subsubsection{Proof of Lemma~\ref{lemma_2}}
\begin{proof}
    Based on the core update law in Algorithm~\ref{alg:dgd}, we know that
    \begin{equation}\label{eq_8}
        \bar{x}_{k+1} = \bar{x}_k-\alpha_k\frac{1}{N}\sum_{i=1}^N\nabla f^i(x^i_k),
    \end{equation}
which is the average of all agents and is because all mixing matrices preserve the average (see Proposition 1 in~\cite{koloskova2020unified}). By setting $y=\bar{x}_{k+1}$ and $x=\bar{x}_k$ in the conclusion of Lemma~\ref{lemma_1}, and equaling $z:=L_0+L_1\|\nabla f^i(\bar{x}_k)\|>0$, we can obtain the following relation:
\begin{equation}\label{eq_9}
    f^i(\bar{x}_{k+1})\leq \underbrace{f^i(\bar{x}_k) +\langle\nabla f^i(\bar{x}_k), \bar{x}_{k+1}-\bar{x}_k\rangle + z\frac{\phi(L_1\|\bar{x}_{k+1}-\bar{x}_k\|)}{L_1^2}}_{T_0}.
\end{equation}
Viewing the above equation and leveraging the classical idea in optimization theory, we know that the gradient method is to choose the next iterate $\bar{x}_{k+1}$ by minimizing the global upper bound on the objective established around the current iterate $\bar{x}_k$. Therefore, our goal is to minimize the right hand side of Eq.~\ref{eq_9} in $\bar{x}_{k+1}$. Through Eq.~\ref{eq_8}, it is immediately obtained that $\bar{x}_{k+1}-\bar{x}_k=\frac{1}{N}\sum_{i=1}^N\nabla f^i(x^i_k)$. Luckily, we aim to derive a relationship between $\bar{x}_k$ and $\bar{x}_{k+1}$ after minimizing $T_0$ such that $\frac{1}{N}\sum_{i=1}^N\nabla f^i(x^i_k)$ will not explicit show in the relationship. As the last term in $T_0$ is only dependent on $\|\bar{x}_{k+1}-\bar{x}_k\|$, the optimal point $\bar{x}_{k+1}=\mathcal{T}(\bar{x}_k)$ is the outcome of the gradient step $\mathcal{T}(\bar{x}_k)=\bar{x}_{k}-\eta^*\frac{\nabla f^i(\bar{x}_k)}{\|\nabla f^i(\bar{x}_k)\|}$ for some $\eta^*\geq 0$ (Please see the auxiliary Lemma~\ref{lemma_5} in Section~\ref{auxiliary_results} for example illustration). This ensures the following progress in decreasing the function value:
\begin{equation}
    f^i(\bar{x}_k)-f^i(\mathcal{T}(\bar{x}_{k}))\geq \underset{r\geq 0}{\textnormal{max}}\bigg\{\|\nabla f^i(\bar{x}_k\eta-\frac{z}{L_1^2}\psi(L_1\eta))\|\bigg\}=\frac{z}{L_1^2}\psi_*\bigg(\frac{L_1\|\nabla f^i(\bar{x}_k)\|}{z}\bigg),
\end{equation}
where $\psi_*$ is the conjugate function~\cite{fenchel2014conjugate} to $\psi$. We can know that $\eta^*$ is exactly the solution to the above optimization problem, satisfying $L_1\|\nabla f^i(\bar{x}_k)\|=a\psi'(L_1\eta^*)$. Solving this equation and using $(\psi')^{-1}(\gamma)=\psi'_*(\gamma)=\textnormal{ln}(1+\gamma)$, we obtain 
\begin{equation}
    \eta^*=\frac{1}{L_1}\psi'_*(\frac{L_1\|\nabla f^i(\bar{x}_k)\|}{z})=\frac{1}{L_1}\textnormal{ln}(1+\frac{L_1\|\nabla f^i(\bar{x}_k)\|}{z}),
\end{equation}
which results in the optimal choice of step size 
\begin{equation}
    \alpha_k^*=\frac{1}{\|\nabla f^i(\bar{x}_k)\|}\textnormal{ln}\bigg(1+\frac{L_1\|\nabla f^i(\bar{x}_k)\|}{L_0+L_1\|\nabla f^i(\bar{x}_k)\|}\bigg).
\end{equation}
We then follow the similar proof for Case (3) from Lemma B.1 in~\cite{vankov2024optimizing} by choosing the clipping step size 
\begin{equation}
    \alpha_k=\textnormal{min}\bigg\{\frac{1}{2L_0},\frac{1}{3L_1\|\nabla f^i(\bar{x}_k)\|}\bigg\},
\end{equation}
which gives us the conclusion. To ease the use of step size for each local agent, we augment it with $\frac{1}{\|\nabla f^i(x^i_k)\|}$ such that it becomes 
\begin{equation}
    \alpha_k=\textnormal{min}\bigg\{\frac{1}{2L_0},\frac{1}{3L_1\|\nabla f^i(\bar{x}_k)\|},\frac{1}{3L_1\|\nabla f^i(x^i_k)\|}\bigg\}.
\end{equation}
Based on $\alpha_k$, we know that it is essentially associated with agent $i$, which makes local updates more complex. To mitigate this issue, we resort to the maximum operator such that it becomes
\begin{equation}
\alpha_k=\textnormal{min}\bigg\{\frac{1}{2L_0},\frac{1}{3L_1\textnormal{max}_i\{\|\nabla f^i(\bar{x}_k)\|\}},\frac{1}{3L_1\textnormal{max}_i\{\|\nabla f^i(x^i_k)\|\}}\bigg\},
\end{equation}
which implies that $\alpha_k\leq \frac{1}{3L_1\|\nabla f^i(\bar{x}_k)\|}$ and $\alpha_k\leq \frac{1}{3L_1\|\nabla f^i(x^i_k)\|}$.
\end{proof}
\subsubsection{Proof of Lemma~\ref{lemma_3}}
\begin{proof}
    According to Lemma~\ref{lemma_2}, for any $k\geq 0$, we have
    \begin{equation}
        f^i(\bar{x}_k)-f^i(\bar{x}_{k+1})\geq \frac{\|\nabla f^i(\bar{x}_k)\|^2}{4L_0+6L_1\|\nabla f^i(\bar{x}_k)\|}.
    \end{equation}
As $\|\nabla f^i(\bar{x}_k)\|\geq \frac{L_0}{L_1}$, the above equation can be rewritten as
\begin{equation}
    f^i(\bar{x}_k)-f^i(\bar{x}_{k+1})\geq \frac{\|\nabla f^i(\bar{x}_k)\|^2}{10L_1\|\nabla f^i(\bar{x}_k)\|}=\frac{\|\nabla f^i(\bar{x}_k)\|}{10L_1}
\end{equation}
% For the convenience of analysis, we define $\psi(\gamma):=\frac{\gamma^2}{10L_1\gamma}$, where $\gamma\geq 0$. Thus, we have:
% $
%     f^i(\bar{x}_k) - f^i(\bar{x}_{k+1}) \geq \psi(\|\nabla f^i(\bar{x}_k)\|)$.
Summing the above relationship over all $i\in\mathcal{V}$ and dividing both sides by $N$ produces the following:
\begin{equation}
    F(\bar{x}_k)-F(\bar{x}_{k+1})\geq \frac{\sum_{i=1}^N\|\nabla f^i(\bar{x}_k)\|}{10NL_1}\geq \frac{\|\sum_{i=1}^N\nabla f^i(\bar{x}_k)\|}{10NL_1}=\frac{\|\nabla F(\bar{x}_k)\|}{10L_1},
\end{equation}
Following the proof techniques and having $\|\nabla f^i(\bar{x}_k)\|\leq \frac{L_0}{L_1}$ grants us the following:
    \begin{equation}
        f^i(\bar{x}_k)-f^i(\bar{x}_{k+1})\geq\frac{\|\nabla f^i(\bar{x}_k)\|^2}{10L_0}
    \end{equation}
Summing the above inequality over all $i\in\mathcal{V}$ and dividing both sides by $N$ generates the following:
\begin{equation}
    F(\bar{x}_k)-F(\bar{x}_{k+1})\geq\frac{\sum_{i=1}^N\|\nabla f^i(\bar{x}_k)\|^2}{10NL_0}\geq \frac{\|\frac{1}{N}\sum_{i=1}^N\nabla f^i(\bar{x}_k)\|^2}{10L_0}=\frac{\|\nabla F(\bar{x}_k)\|^2}{10L_0},
\end{equation}
which completes the proof.
\end{proof}
\subsubsection{Proof of Theorem~\ref{theorem_1}}
\begin{proof}
    To show the conclusion in Theorem~\ref{theorem_1}, we will look into two cases that have been discussed in Lemma~\ref{lemma_3}.
    
    \textbf{Case 1: $\|\nabla f^i(\bar{x}_k)\|\geq\frac{L_0}{L_1}$.}
    Define $F_k:=F(\bar{x}_k)-F^*$ and $R_k:=\|\bar{x}_k-x^*\|$. We can know from Lemma~\ref{lemma_6} that $R_k$ is nonincreasing. With this, we can immediately obtain that $R_k\leq R$, where $R:=R_0$. Therefore, recalling the convexity property of $F$, we have
    \begin{equation}
        F(\bar{x}_k) - F^*\leq \langle\nabla F(\bar{x}_k),\bar{x}_k-x^*\rangle\leq \|\nabla F(\bar{x}_k)\|\|\bar{x}_k-x^*\|\leq\|\nabla F(\bar{x}_k)\|R.
    \end{equation}
    The second inequality is due to Cauchy-Schwartz inequality. To ease the analysis, we define $h(\gamma):=\frac{\gamma}{10L_1}$.
    Therefore, combining the conclusion from Lemma~\ref{lemma_3} and the above inequality produces the following:
    \begin{equation}
        F_k-F_{k+1}\geq h(\|\nabla F(\bar{x}_k)\|).
    \end{equation}
    As $h$ is an increasing function, we have
    \begin{equation}\label{eq_a3}
        F_k-F_{k+1}\geq h(\frac{F_k}{R}).
    \end{equation}
    Thus,
    \begin{equation}
        1\leq \frac{F_k-F_{k+1}}{h(\frac{F_k}{R})}\leq \int_{F_{k+1}}^{F_k}\frac{d\tau}{h(\frac{\tau}{R})}=\int_{F_{k+1}}^{F_k}\bigg(\frac{10L_1R}{\tau}\bigg)d\tau=10L_1R\textnormal{ln}\frac{F_k}{F_{k+1}}.
    \end{equation}
    We then sum the above inequalities for all $0\leq k\leq K-1$, obtaining
    \begin{equation}
        K\leq 10L_1R\textnormal{ln}\frac{F_0}{F_K}
    \end{equation}
    Hence, $F_k\leq \epsilon$ whenever
    \begin{equation}
        K\geq 10L_1R\textnormal{ln}\frac{F_0}{\epsilon}.
    \end{equation}

    \textbf{Case 2: $\|\nabla f^i(\bar{x}_k)\|<\frac{L_0}{L_1}$.} We follow the similar analysis techniques and define $h$ slightly different such that $h(\gamma):=\frac{\gamma^2}{10L_0}$. As $h$ is still an increasing function in this case, combining the conclusion from Lemma~\ref{lemma_3} and the convexity property of $F$ will produce the same relationship in Eq.~\ref{eq_a3}. Thus, we have
    \begin{equation}
        1\leq \frac{F_k-F_{k+1}}{h(\frac{F_k}{R})}\leq \int_{F_{k+1}}^{F_k}\frac{d\tau}{h(\frac{\tau}{R})}=\int_{F_{k+1}}^{F_k}\bigg(\frac{10L_0R^2}{\tau^2}\bigg)d\tau=10L_0R^2\bigg(\frac{1}{F_{k+1}}-\frac{1}{F_k}\bigg).
    \end{equation}
    Similarly, summing up these inequalities for all $0\leq k\leq K-1$ and dropping the negative $\frac{1}{F_0}$ term, we have
    \begin{equation}
        K\leq \frac{10L_0R^2}{F_K},
    \end{equation}
    which yields $K\geq \frac{10L_0R^2}{\epsilon}$ for us. Taking the maximum between them completes the proof.
\end{proof}
\subsubsection{Proof of Corollary~\ref{corollary_1}}
\begin{proof}
    As we have
    \begin{equation}
        R^2=\|\bar{x}_0-x^*\|^2=\|\bar{x}_0-x^i_0+x^i_0-x^*\|^2\leq 2(\|\bar{x}_0-x^i_0\|^2+\|x^i_0-x^*\|^2)
    \end{equation}
    The last inequality follows from the fact that $\|a+b\|^2\leq 2(\|a\|^2+\|b\|^2)$. We now study the upper bound for $\|\bar{x}_0-x^i_0\|^2$, which is the typical consensus estimate in most distributed optimization works. According to Lemma~4.2 (bounded deviation from mean) in~\cite{berahas2018balancing}, we can immediately obtain that
    \begin{equation}
        \|\bar{x}_0-x^i_0\|^2\leq\frac{\alpha^2_0D^2}{(1-\sqrt{\rho})^2}.
    \end{equation}
    $D$ typically is a bounded constant for the gradient norm $\|\nabla f^i(x)\|$. However, due to the step size we have adopted in this work satisfying the condition $\alpha_k=\textnormal{min}\bigg\{\frac{1}{2L_0},\frac{1}{3L_1\textnormal{max}_i\{\|\nabla f^i(\bar{x}_k)\|\}},\frac{1}{3L_1\textnormal{max}_i\{\|\nabla f^i(x^i_k)\|\}}\bigg\}$, there is no necessity for us to impose such an assumption. Therefore, we can obtain the following relationship
    \begin{equation}
        \|\bar{x}_0-x^i_0\|^2\leq\frac{1}{9L_1^2(1-\sqrt{\rho})^2}.
    \end{equation}
    Therefore, $R^2$ is bounded by the following
    \begin{equation}
        R^2\leq 2\bigg(\frac{1}{9L_1^2(1-\sqrt{\rho})^2}+\textnormal{max}_i\|x^i_0-x^*\|^2\bigg).
    \end{equation}
    Substituting the above upper bound into the conclusion from Theorem~\ref{theorem_1} grants us the desirable result.
\end{proof}
\subsubsection{Proof of Theorem~\ref{theorem_2}}
\begin{proof}
    Similar to the proof we have done for convex function, the proof for nonconvex function also involves two cases. 

    \textbf{Case 1: $\|\nabla f^i(\bar{x}_k)\|\geq\frac{L_0}{L_1}$.} Analogously, we can obtain $F_k-F_{k+1}\geq h(\frac{F_k}{R})$. Summing up these inequalities from all $0\leq k\leq K$ and denoting $\nabla F^*_K=\underset{0\leq k\leq K}{\textnormal{min}}\|\nabla F(\bar{x}_k)\|$, we can attain the following relationship
    \begin{equation}
        F_0\geq F_0-F_K\geq\sum_{k=0}^Kh(\|\nabla F(\bar{x}_k)\|)\geq (K+1)h(\nabla F^*_K).
    \end{equation}
    The last inequality is due to an increasing function $h$. We denote by $h^{-1}$ the inverse function of $h$ such that we have
    \begin{equation}
        \nabla F^*_K\leq h^{-1}\bigg(\frac{F_0}{K+1}\bigg)\leq \epsilon.
    \end{equation}
    whenever $\frac{F_0}{K+1}\leq h(\epsilon)$.
    This implies that $K+1\leq\frac{F_0}{h(\epsilon)}$. Equivalently, we have $K+1\geq\frac{10F_0L_1}{\epsilon}$.

    \textbf{Case 2: $\|\nabla f^i(\bar{x}_k)\|<\frac{L_0}{L_1}$.} Likewise, we apply the same analysis from the last scenario in this scenario and obtain that $K+1\geq \frac{10F_0L_0}{\epsilon^2}$. 

    Combining the above two cases and taking the maximum between them completes the proof.
\end{proof}
\subsubsection{Corollary~\ref{corollary_2} for Complexity with Consensus}
\begin{corollary}\label{corollary_2}
    Let Assumptions~\ref{assumption_1} and~\ref{definition_1} hold. With Theorem~\ref{theorem_2}, we have $\textnormal{min}_{0\leq k\leq K}\|\nabla F(\bar{x}_k)\|\leq \epsilon$ whenever
    $
    K\geq 15\textnormal{max}\{\frac{L_0L_1C^2_0}{\epsilon}, \frac{L^2_0C^2_0}{\epsilon^2}
    \},
    $ where $C_0=\sqrt{\frac{1}{9(1-\sqrt{\rho})^2L^2_1}+\textnormal{max}_i\|x^i_0-x^*\|^2}$.
\end{corollary}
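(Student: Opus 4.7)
The goal of Corollary~\ref{corollary_2} is to make the spectral-gap dependence explicit in the nonconvex complexity bound by eliminating $F_0$ in favor of $R^2$, then bounding $R^2$ as in the proof of Corollary~\ref{corollary_1}. The plan is therefore to (i) upper bound $F_0$ by a multiple of $L_0 R^2$ using the $(L_0,L_1)$-smoothness of the global objective $F$, (ii) plug this into the bound of Theorem~\ref{theorem_2}, and (iii) reuse the consensus-based estimate $R^2 \leq 2C_0^2$ already derived in the proof of Corollary~\ref{corollary_1}.

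For step (i), I would invoke Lemma~\ref{lemma_1} applied to $F$ (which is $(L_0,L_1)$-smooth as noted after Assumption~\ref{definition_1}) with the choice $y=\bar{x}_0$ and $x=x^*$. Since $x^*$ is a minimizer of the unconstrained objective, $\nabla F(x^*)=0$, so the inner-product term vanishes and the smoothness constant collapses to $L_0$:
\begin{equation*}
F_0 \;=\; F(\bar{x}_0)-F^* \;\leq\; L_0\,\frac{\psi(L_1 R)}{L_1^2}.
\end{equation*}
Under the working regime $L_1 R \leq 1$ that is already implicit in Assumption~\ref{definition_1} (which requires $\|y-x\|\leq 1/L_1$), one has $\psi(\gamma)\leq c\gamma^2$ for a small absolute constant $c$ (e.g.\ $c=e-2<3/4$), giving $F_0 \leq c\,L_0 R^2$. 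This is the replacement of $F_0$ by an $R$-dependent quantity that Corollary~\ref{corollary_2} needs.

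For step (ii), I would substitute this bound into the two terms inside the $\max$ in Theorem~\ref{theorem_2}, producing upper bounds of the form $c'\,L_0 L_1 R^2/\epsilon$ and $c'\,L_0^2 R^2/\epsilon^2$ respectively. Step (iii) then reuses the consensus estimate from the proof of Corollary~\ref{corollary_1}, where the clipping-based step size $\alpha_0\leq 1/(3L_1\max_i\|\nabla f^i(\bar{x}_0)\|)$ together with Lemma~4.2 of~\cite{berahas2018balancing} yields $\|\bar{x}_0-x^i_0\|\leq 1/(3L_1(1-\sqrt{\rho}))$, and hence $R^2 \leq 2\bigl(1/(9L_1^2(1-\sqrt{\rho})^2)+\max_i\|x^i_0-x^*\|^2\bigr)=2C_0^2$. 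Combining the three steps, absorbing universal constants, and taking the maximum over the two cases delivers the stated $K \geq 15\max\{L_0L_1C_0^2/\epsilon,\,L_0^2C_0^2/\epsilon^2\}$ bound.

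The main obstacle is the passage from the exponential $\psi(L_1 R)$ to a clean quadratic term $L_0 R^2$; this requires controlling the regime of $L_1 R$ to avoid the exponential blow-up that Lemma~\ref{lemma_1} warns about. Since Assumption~\ref{definition_1} already restricts the analysis to $\|y-x\|\leq 1/L_1$ and the clipping step size enforces small consensus errors, this regime is naturally available, but the quadratic surrogate must be made explicit enough that the final absolute constant lines up with $15$ rather than being much larger. All other pieces, including the spectral-gap-free part $\max_i\|x^i_0-x^*\|^2$ of $C_0$ and the structure of the two nonconvex cases, transfer directly from Theorem~\ref{theorem_2} and Corollary~\ref{corollary_1}.
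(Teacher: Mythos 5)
Your proposal is correct and follows essentially the same route as the paper: both bound $F_0$ by a constant multiple of $L_0R^2$ via an $(L_0,L_1)$-descent inequality evaluated against $x^*$ in the regime $\|y-x\|\leq 1/L_1$, then reuse the estimate $R^2\leq 2C_0^2$ from Corollary~\ref{corollary_1} and substitute into Theorem~\ref{theorem_2}. The only (cosmetic) difference is that you apply Lemma~\ref{lemma_1} to $F$ directly, where $\nabla F(x^*)=0$ kills both the inner-product term and the $L_1\|\nabla F(x^*)\|$ contribution, whereas the paper applies the exponential-quadratic form of the descent inequality to each $f^i$ and sums; your version is in fact slightly cleaner on this point, and your constant $\psi(\gamma)\leq(e-2)\gamma^2$ lands on the stated factor of $15$.
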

\begin{proof}
    From Theorem~\ref{theorem_2}, to explicitly show the relationship between the complexity and the spectral gap, we needs to upper bound $F_0$, by connecting $F(\bar{x}_0)-F(x^*)$ with $R:=\|\bar{x}_0-x^*\|$. Assumption~\ref{definition_1} implies that 
    \begin{equation}
        \|\nabla f^i(y)-\nabla f^i(x)\|\leq (L_0+L_1\textnormal{sup}_{u\in[y,x]}\|\nabla f^i(u)\|)\|y-x\|.
    \end{equation}
    Based on Lemma 2.1 in~\cite{gorbunov2024methods}, we know the above inequality implies the following relationship:
    \begin{equation}
        f^i(y)\leq f^i(x)+\langle\nabla f^i(x),y-x\rangle + \frac{L_0+L_1\|\nabla f^i(x)\|}{2}e^{L_1\|y-x\|}\|y-x\|^2.
    \end{equation}
    Letting $y=\bar{x}_k$, $x=x^*$ and leveraging $\|y-x\|\leq\frac{1}{L}$ yields the following:
    \begin{equation}
        f^i(\bar{x}_k)-f^i(x^*)\leq \frac{L_0e}{2}\|\bar{x}_k-x^*\|^2
    \end{equation}
    With this in hand, we have
    \begin{equation}
        F(\bar{x}_0)-F^*\leq\frac{3L_0\frac{1}{N}\sum_{i=1}^N\|\bar{x}_0-x^*\|^2}{2}.
    \end{equation}
    Hence, we have $F_0\leq \frac{3L_0C_0}{2}$. Substituting this into the conclusion in Theorem~\ref{theorem_2} completes the proof.
\end{proof}
\subsection{Additional analysis in Section~\ref{stochastic}}
In this section, we primarily cover the missing proof in Section~\ref{stochastic}. The analysis is non-trivial, which requires multiple technical auxiliary lemmas that will be presented in Section~\ref{auxiliary_results}. We start with the proof for Theorem~\ref{theorem_3}.
\subsection{Proof of Theorem~\ref{theorem_3}}
In our analysis, to consider multiple agents, we will have the expanded mixing matrix of the Kronecker product between $\Pi$ and $I_d$, $\mathbf{P}=\Pi\otimes I_d$, but the magnitudes of eigenvalues of $\mathbf{P}$ remain the same through a known result presented in the sequel and the fact that eigenvalue of $I_d$ is 1. 
% In what follows, we present a result that assists in determining the upper bound of the second-largest eigenvalue of $W$.
\begin{theorem}\cite{schacke2004kronecker}\label{kronecker_prod}
    Let $\mathbf{C}\in\mathbb{R}^{N\times N}$ and $\mathbf{D}\in\mathbb{R}^{d\times d}$, with eigenvalue $\lambda\in s(\mathbf{C})$ with corresponding eigenvector $x\in\mathbb{C}^{N}$, and $\mu\in s(\mathbf{D})$ with corresponding eigenvector $y\in\mathbb{C}^{d}$, where $s(\cdot)$ signifies the spectrum of a matrix. Then $\lambda\mu$ is an eigenvalue of $\mathbf{C}\otimes \mathbf{D}$ with corresponding eigenvector $x\otimes y\in\mathbb{C}^{dN}$. Any eigenvalue of $\mathbf{C}\otimes \mathbf{D}$ arises as such a product of eigenvalues of $\mathbf{C}$ and $\mathbf{D}$.
\end{theorem}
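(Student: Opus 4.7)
The plan is to decompose the statement into two independent claims: (i) every product $\lambda\mu$ of eigenvalues of $\mathbf{C}$ and $\mathbf{D}$ lies in the spectrum of $\mathbf{C}\otimes\mathbf{D}$, with $x\otimes y$ an explicit eigenvector, and (ii) no other values arise, so the spectrum is exhausted by such products. The first part is a one-line consequence of the \emph{mixed-product} identity $(A\otimes B)(E\otimes F)=(AE)\otimes(BF)$; the second part needs a global spectral argument, for which I would use simultaneous Schur triangularization of $\mathbf{C}$ and $\mathbf{D}$.

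For part (i), I would first record the mixed-product identity, which is immediate from the block definition $A\otimes B=[a_{ij}B]$ and ordinary matrix multiplication. Applying it with the column vectors $x$ and $y$ viewed as matrices of sizes $N\times 1$ and $d\times 1$ gives
\begin{equation*}
(\mathbf{C}\otimes\mathbf{D})(x\otimes y)=(\mathbf{C}x)\otimes(\mathbf{D}y)=(\lambda x)\otimes(\mu y)=\lambda\mu\,(x\otimes y).
\end{equation*}
Since $x\neq 0$ and $y\neq 0$ force $x\otimes y\neq 0$ (again directly from the block form), $\lambda\mu$ is indeed an eigenvalue of $\mathbf{C}\otimes\mathbf{D}$ with the stated eigenvector.

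Part (ii) is the main obstacle, because producing $Nd$ candidate products $\lambda_i\mu_j$ from eigenvectors alone does not \emph{a priori} account for all $Nd$ eigenvalues of $\mathbf{C}\otimes\mathbf{D}$ when $\mathbf{C}$ or $\mathbf{D}$ fails to be diagonalizable. The clean route is Schur's theorem: write $\mathbf{C}=\mathbf{U}_1\mathbf{T}_1\mathbf{U}_1^{*}$ and $\mathbf{D}=\mathbf{U}_2\mathbf{T}_2\mathbf{U}_2^{*}$ with $\mathbf{U}_1,\mathbf{U}_2$ unitary and $\mathbf{T}_1,\mathbf{T}_2$ upper triangular carrying the eigenvalues $\lambda_1,\dots,\lambda_N$ and $\mu_1,\dots,\mu_d$ on their diagonals. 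Two applications of the mixed-product identity yield
\begin{equation*}
\mathbf{C}\otimes\mathbf{D}=(\mathbf{U}_1\otimes\mathbf{U}_2)(\mathbf{T}_1\otimes\mathbf{T}_2)(\mathbf{U}_1\otimes\mathbf{U}_2)^{*},
\end{equation*}
where $\mathbf{U}_1\otimes\mathbf{U}_2$ is unitary (by $(\mathbf{U}_1\otimes\mathbf{U}_2)(\mathbf{U}_1\otimes\mathbf{U}_2)^{*}=I_N\otimes I_d=I_{Nd}$), and $\mathbf{T}_1\otimes\mathbf{T}_2$ is block upper triangular with diagonal blocks $[\mathbf{T}_1]_{ii}\mathbf{T}_2$, hence itself upper triangular with diagonal entries exactly $\lambda_i\mu_j$ in lexicographic order.

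Since the eigenvalues of an upper triangular matrix are precisely its diagonal entries (with algebraic multiplicity) and unitary similarity preserves the spectrum, the $Nd$ eigenvalues of $\mathbf{C}\otimes\mathbf{D}$ counted with multiplicity are exactly $\{\lambda_i\mu_j:1\le i\le N,\;1\le j\le d\}$, which proves the converse direction and combines with part (i) to give the full claim. The only bookkeeping subtlety worth flagging in the write-up is verifying upper-triangularity of $\mathbf{T}_1\otimes\mathbf{T}_2$ and reading off its diagonal, which is immediate from the block pattern $[t^{(1)}_{ij}\mathbf{T}_2]$.
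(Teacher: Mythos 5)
Your proof is correct and complete. Note, however, that the paper itself does not prove this statement at all: it is imported verbatim as a known result, with the citation to Sch\"acke's notes on the Kronecker product standing in for the argument. So there is no ``paper proof'' to compare against line by line; what you have written is a self-contained replacement for that citation, and it is the standard one. Your part (i) — the mixed-product identity $(A\otimes B)(E\otimes F)=(AE)\otimes(BF)$ applied to the column vectors $x$ and $y$ — is exactly how the existence direction is always done, and your part (ii) correctly identifies the real content of the theorem: the claim that \emph{every} eigenvalue of $\mathbf{C}\otimes\mathbf{D}$ is a product $\lambda_i\mu_j$ cannot be settled by eigenvector counting alone, since $\mathbf{C}$ or $\mathbf{D}$ may be defective and the $x_i\otimes y_j$ need not span $\mathbb{C}^{dN}$. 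Resolving this with Schur triangularization — so that $\mathbf{T}_1\otimes\mathbf{T}_2$ is upper triangular with diagonal entries $\lambda_i\mu_j$ and unitary similarity preserves the spectrum with multiplicity — is the clean and standard fix (Jordan form would also work but is heavier). One small remark for the write-up: for the paper's actual use of the theorem ($\Pi\otimes I_d$ with $\Pi$ symmetric doubly stochastic), $\mathbf{C}$ is symmetric and $\mathbf{D}=I_d$, so both factors are diagonalizable and the Schur machinery is not even needed; but your general proof covers the theorem as stated, which is strictly stronger than what the paper requires.
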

One immediate outcome from Theorem~\ref{kronecker_prod} is that $\textnormal{max}\{|\lambda_2(\mathbf{P})|, |\lambda_{dN}(\mathbf{P})|\}\leq \sqrt{\rho}<1$.

\begin{proof}
%%%%%%%%%%%%%%%%%%%%%%%%%%%%%%%%%%%%
%%% Another lemma below %%%%%%%%%%%%
Recalling the update law for average iterate as follows, we have
\begin{equation}\label{eq_46}
\begin{split}
    \|\bar{x}_{k+1}-x^*\|^2&=\|\bar{x}_k-\alpha_k\frac{1}{N}\sum_{i=1}^N g^i_k-x^*\|^2\\&=\|\bar{x}_k-x^*-\alpha_k\frac{1}{N}\sum_{i=1}^N \nabla f^i(x^i_k)+\alpha_k\frac{1}{N}\sum_{i=1}^N \nabla f^i(x^i_k)-\alpha_k\frac{1}{N}\sum_{i=1}^N g^i_k\|^2\\&
    =\|\bar{x}_k-x^*-\alpha_k\frac{1}{N}\sum_{i=1}^N \nabla f^i(x^i_k)\|^2 + \alpha^2_k\|\frac{1}{N}\sum_{i=1}^N \nabla f^i(x^i_k)-\frac{1}{N}\sum_{i=1}^N g^i_k\|^2\\&+\frac{2\alpha_k}{N}\langle\bar{x}_k-x^*-\frac{\alpha_k}{N}\sum_{i=1}^N\nabla f^i(x^i_k),\sum_{i=1}^N\nabla f^i(x^i_k)-\sum_{i=1}^Ng^i_k\rangle
\end{split}
\end{equation}
The last term is zero in expectation. as $\mathbb{E}[g^i_k]=\nabla f^i(x^i_k)$. The second term can be bounded as
\begin{equation}\label{eq_47}
    \alpha^2_k\mathbb{E}[\|\frac{1}{N}\sum_{i=1}^N \nabla f^i(x^i_k)-\frac{1}{N}\sum_{i=1}^N g^i_k\|^2]\leq\frac{\alpha^2_k\sigma^2}{N}
\end{equation}
We next analyze the first term. The first term can be rewritten as
\begin{equation}
    \|\bar{x}_k-x^*-\alpha_k\frac{1}{N}\sum_{i=1}^N \nabla f^i(x^i_k)\|^2=\|\bar{x}_k-x^*\|^2+\alpha^2_k\|\frac{1}{N}\sum_{i=1}^N\nabla f^i(x^i_k)\|^2 - 2\alpha_k\langle\bar{x}_k-x^*,\frac{1}{N}\sum_{i=1}^N\nabla f^i(x^i_k)\rangle
\end{equation}
We upper bound $\|\frac{1}{N}\sum_{i=1}^N\nabla f^i(x^i_k)\|^2$ in the next.
\begin{equation}
    \begin{split}
        \|\frac{1}{N}\sum_{i=1}^N\nabla f^i(x^i_k)\|^2&= \|\frac{1}{N}\sum_{i=1}^N(\nabla f^i(x^i_k)-\nabla f^i(\bar{x}_k)+\nabla f^i(\bar{x}_k)-\nabla f^i(x^*))\|^2\\&\leq \frac{2}{N}\sum_{i=1}^N\|\nabla f^i(x^i_k)-\nabla f^i(\bar{x}_k)\|^2+\frac{2}{N}\sum_{i=1}^N\|\nabla f^i(\bar{x}_k)-\nabla f^i(x^*)\|^2\\&\leq \frac{2}{N}\sum_{i=1}^N(L_0+L_1\|\nabla f^i(\bar{x}_k)\|)^2\|x^i_k-\bar{x}_k\|^2\\&+\frac{2}{N}\sum_{i=1}^N(2L_0+3L_1\|\nabla f^i(\bar{x}_k)\|)(f^i(\bar{x}_k)-f^i_*)
    \end{split}
\end{equation}
The second inequality follows from $\|a+b\|^2\leq 2\|a\|^2+2\|b\|^2$ and Corollary~\ref{corollary_3} in SM~\ref{auxiliary_results}.
In the following, we bound $2\alpha_k\langle\bar{x}_k-x^*,\frac{1}{N}\sum_{i=1}^N\nabla f^i(x^i_k)\rangle$.
\begin{equation}
    \begin{split}
        &-\frac{1}{\alpha_k}\cdot 2\alpha_k\langle\bar{x}_k-x^*,\frac{1}{N}\sum_{i=1}^N\nabla f^i(x^i_k)\rangle=\frac{-2}{N}\sum_{i=1}^N[\langle\bar{x}_k-x^i_k,\nabla f^i(x^i_k)\rangle+\langle x^i_k-x^*,\nabla f^i(x^i_k)\rangle]
    \end{split}
\end{equation}
According to the convexity, $(L_0,L_1)$-smoothness condition and the constraint $\|\bar{x}_k-x^i_k\|\leq\frac{1}{L_1}$, we have
\begin{equation}
    \begin{split}
        &-\frac{1}{\alpha_k}\cdot 2\alpha_k\langle\bar{x}_k-x^*,\frac{1}{N}\sum_{i=1}^N\nabla f^i(x^i_k)\rangle\\&\leq \frac{-2}{N}\sum_{i=1}^N[f^i(\bar{x}_k)-f^i(x^i_k)-\frac{e}{2}(L_0+L_1\|\nabla f^i(x^i_k\|)\|\bar{x}_k-x^i_k\|^2+f^i(x^i_k)-f^i_*]\\&\leq -2(F(\bar{x}_k)-F^*)+\frac{1}{N}\sum_{i=1}^N 3(L_0+L_1\|\nabla f^i(x^i_k)\|)\|\bar{x}_k-x^i_k\|^2
    \end{split}
\end{equation}
We can obtain that
\begin{equation}
    -2\alpha_k\langle\bar{x}_k-x^*,\frac{1}{N}\sum_{i=1}^N\nabla f^i(x^i_k)\rangle\leq -2(F(\bar{x}_k)-F^*)+\frac{1}{N}\sum_{i=1}^N 3(L_0+L_1\|\nabla f^i(x^i_k)\|)\|\bar{x}_k-x^i_k\|^2.
\end{equation}
Therefore, the following can be attained:
\begin{equation}
    \begin{split}
        &\|\bar{x}_k-x^*-\alpha_k\frac{1}{N}\sum_{i=1}^N \nabla f^i(x^i_k)\|^2\leq \|\bar{x}_k-x^*\|^2\\&+\alpha^2_k\bigg[\frac{2}{N}\sum_{i=1}^N(L_0+L_1\|\nabla f^i(\bar{x}_k)\|)^2\|x^i_k-\bar{x}_k\|^2\\&+\frac{2}{N}\sum_{i=1}^N(2L_0+3L_1\|\nabla f^i(\bar{x}_k)\|)(f^i(\bar{x}_k)-f^i_*)\bigg]\\&-2\alpha_k(F(\bar{x}_k)-F^*)+\frac{\alpha_k}{N}\sum_{i=1}^N 3(L_0+L_1\|\nabla f^i(x^i_k)\|)\|\bar{x}_k-x^i_k\|^2
    \end{split}
\end{equation}
Substituting the above equation and Eq.~\ref{eq_47} into Eq.~\ref{eq_46} yields the following:
\begin{equation}\label{eq_54}
    \begin{split}
        \|\bar{x}_{k+1}-x^*\|^2&\leq \|\bar{x}_k-x^*\|^2\\&+\alpha^2_k\bigg[\frac{2}{N}\sum_{i=1}^N(L_0+L_1\|\nabla f^i(\bar{x}_k)\|)^2\|x^i_k-\bar{x}_k\|^2\\&+\frac{2}{N}\sum_{i=1}^N(2L_0+3L_1\|\nabla f^i(\bar{x}_k)\|)(f^i(\bar{x}_k)-f^i_*)\bigg]\\&-2\alpha_k(F(\bar{x}_k)-F^*)+\frac{\alpha_k}{N}\sum_{i=1}^N 3(L_0+L_1\|\nabla f^i(x^i_k)\|)\|\bar{x}_k-x^i_k\|^2 + \frac{\alpha^k\sigma^2}{N}
    \end{split}
\end{equation}
To continue the analysis, we divide it into two scenarios we have discussed before. However, we extend slightly such that $\textnormal{max}\{\|\nabla f^i(\bar{x}_k)\|,\|\nabla f^i(x^i_k)\|\}\geq \frac{L_0}{L_1}$ and $\textnormal{max}\{\|\nabla f^i(\bar{x}_k),\|\nabla f^i(x^i_k)\|\}< \frac{L_0}{L_1}$.

\textbf{Case 1: $\textnormal{max}\{\|\nabla f^i(\bar{x}_k)\|,\|\nabla f^i(x^i_k)\|\}\geq \frac{L_0}{L_1}$.}
Eq.~\ref{eq_54} can be rewritten as
\begin{equation}
    \begin{split}
        \|\bar{x}_{k+1}-x^*\|^2&\leq\|\bar{x}_k-x^*\|^2+\frac{1}{N}\sum_{i=1}^N(8\alpha_kL_1^2\|\nabla f^i(\bar{x}_k)\|+6\alpha_kL_1\|\nabla f^i(\bar{x}_k)\|)\|\bar{x}_k-x^i_k\|^2\\&+\frac{2}{N}\sum_{i=1}^N(5L_1\|\nabla f^i(\bar{x}_k)\|\alpha_k^2-\alpha_k)(f^i(\bar{x}_k)-f^i_*)+\frac{\alpha^2_k\sigma^2}{N}
    \end{split}
\end{equation}
As $\alpha_k\leq\frac{1}{\|\nabla f^i(\bar{x}_k)\|}$, the above inequality can be rewritten as
\begin{equation}\label{eq_56}
\begin{split}
    \|\bar{x}_{k+1}-x^*\|^2&\leq\|\bar{x}_k-x^*\|^2+\frac{1}{N}\sum_{i=1}^N(8L_1^2+6L_1)\|\bar{x}_k-x^i_k\|^2+\frac{2}{N}\sum_{i=1}^N(5L_1\alpha_k-\alpha_k)(f^i(\bar{x}_k)-f^i_*)\\&+\frac{\alpha^2_k\sigma^2}{N}
\end{split}
\end{equation}
Recall the conclusion in Lemma~\ref{lemma_8} such that
\begin{equation}\label{eq_57}
\begin{split}
    &\sum_{k=1}^K\frac{1}{N}\sum_{i=1}^N\|x^i_k-\bar{x}_k\|^2\leq \sum_{k=1}^{K}\frac{6\alpha^2_k(\sigma^2+\delta^2)}{(1-\sqrt{\rho})^2}+\sum_{k=1}^K\frac{60\alpha_kL_1}{(1-\sqrt{\rho})^2}(F(\bar{x}_k)-F^*)
\end{split}
\end{equation}
We sum Eq.~\ref{eq_56} over $\{1,...,K\}$, obtaining
\begin{equation}\label{eq_58}
    \begin{split}
        \sum_{k=1}^K\|\bar{x}_{k+1}-x^*\|^2&\leq\sum_{k=1}^K\|\bar{x}_{k}-x^*\|^2 + (8L_1^2+6L_1)\sum_{k=1}^K\frac{1}{N}\sum_{i=1}^N\|\bar{x}_k-x^i_k\|^2\\&+2\sum_{k=1}^K(5L_1\alpha_k-\alpha_k)(F(\bar{x}_k)-F^*)+\sum_{k=1}^K\frac{\alpha^2_k\sigma^2}{N}
    \end{split}
\end{equation}
We then substitute Eq.~\ref{eq_57} into Eq.~\ref{eq_58} to acquire the following:
\begin{equation}
    \begin{split}
        \sum_{k=1}^K\|\bar{x}_{k+1}-x^*\|^2&\leq\sum_{k=1}^K\|\bar{x}_{k}-x^*\|^2 \\& + (8L_1^2+6L_1)\bigg[\sum_{k=1}^{K}\frac{6\alpha^2_k(\sigma^2+\delta^2)}{(1-\sqrt{\rho})^2}+\sum_{k=1}^K\frac{60\alpha_kL_1}{(1-\sqrt{\rho})^2}(F(\bar{x}_k)-F^*)\bigg]\\&+2\sum_{k=1}^K(5L_1\alpha_k-\alpha_k)(F(\bar{x}_k)-F^*)+\sum_{k=1}^K\frac{\alpha^2_k\sigma^2}{N}
    \end{split}
\end{equation}
Rearranging the above inequality yields the next inequality:
\begin{equation}
    \begin{split}
        \sum_{k=1}^K\|\bar{x}_{k+1}-x^*\|^2&\leq\sum_{k=1}^K\|\bar{x}_{k}-x^*\|^2 \\& +(8L_1^2+6L_1)\sum_{k=1}^{K}\frac{6\alpha^2_k(\sigma^2+\delta^2)}{(1-\sqrt{\rho})^2}\\&+\sum_{k=1}^K\bigg(\frac{(8L_1^2+6L_1)60\alpha_kL_1}{(1-\sqrt{\rho})^2}+10L_1\alpha_k-2\alpha_k\bigg)(F(\bar{x}_k)-F^*)\\&+\sum_{k=1}^K\frac{\alpha^2_k\sigma^2}{N}
    \end{split}
\end{equation}
Based on the condition defined for step size $\alpha_k$, we know that $\bigg(\frac{(8L_1^2+6L_1)60\alpha_kL_1}{(1-\sqrt{\rho})^2}+10L_1\alpha_k-2\alpha_k\bigg)\leq -\frac{1}{2}$, such that
\begin{equation}
    \begin{split}
        \frac{1}{2}\sum_{k=1}^K(F(\bar{x}_k)-F^*)&\leq \sum_{k=1}^K\|\bar{x}_{k}-x^*\|^2-\sum_{k=1}^K\|\bar{x}_{k+1}-x^*\|^2+\\&(8L_1^2+6L_1)\sum_{k=1}^{K}\frac{6\alpha^2_k(\sigma^2+\delta^2)}{(1-\sqrt{\rho})^2}+\sum_{k=1}^K\frac{\alpha^2_k\sigma^2}{N}
    \end{split}
\end{equation}
Dividing the last equation on both sides by $\frac{1}{K}$, applying that $\alpha_k\leq\frac{1}{\sqrt{K}}$, and taking the expectation yields the following:
\begin{equation}
    \frac{1}{K}\sum_{k=1}^K\mathbb{E}[(F(\bar{x}_k)-F^*)]\leq\frac{2\|\bar{x}_1-x^*\|^2}{K}+\frac{(96L_1^2+72L_1)(\sigma^2+\delta^2)}{(1-\sqrt{\rho})^2K}+\frac{2\sigma^2}{NK}
\end{equation}

\textbf{Case 2: $\textnormal{max}\{\|\nabla f^i(\bar{x}_k)\|,\|\nabla f^i(x^i_k)\|\}< \frac{L_0}{L_1}$.}
In this case, Eq.~\ref{eq_54} is rewritten as
\begin{equation}
    \begin{split}
        \|\bar{x}_{k+1}-x^*\|^2&\leq \|\bar{x}_k-x^*\|^2+\frac{1}{N}\sum_{i=1}^N\bigg(8L_0^2\alpha^2_k+6\alpha_kL_0\bigg)\|\bar{x}_k-x^i_k\|^2\\&(10L_0\alpha^2_k-2\alpha_k)(F(\bar{x}_k)-F^*) + \frac{\alpha^2_k\sigma^2}{N}
    \end{split}
\end{equation}
Summing last inequality over $\{1,...,K\}$ yields the following:
\begin{equation}
    \begin{split}
        \sum_{k=1}^K\|\bar{x}_{k+1}-x^*\|^2&\leq\sum_{k=1}^K\|\bar{x}_{k}-x^*\|^2 +\sum_{k=1}^K\frac{1}{N}\sum_{i=1}^N\bigg(8L_0^2\alpha^2_k+6\alpha_kL_0\bigg)\|\bar{x}_k-x^i_k\|^2\\&+
        \sum_{k=1}^K(10L_0\alpha^2_k-2\alpha_k)(F(\bar{x}_k)-F^*)+\sum_{k=1}^K\frac{\alpha^2_k\sigma^2}{N}
    \end{split}
\end{equation}
According to Lemma~\ref{lemma_8}, the above inequality can be rewritten as:
\begin{equation}
    \begin{split}
        \sum_{k=1}^K\|\bar{x}_{k+1}-x^*\|^2&\leq\sum_{k=1}^K\|\bar{x}_{k}-x^*\|^2 + \sum_{k=1}^K\bigg(8L_0^2\alpha^2_k+6\alpha_kL_0\bigg)\frac{6\alpha^2_k(\sigma^2+\delta^2)}{(1-\sqrt{\rho})^2}\\&+\sum_{k=1}^K\bigg(8L_0^2\alpha^2_k+6\alpha_kL_0\bigg)\frac{60\alpha^2_kL_0}{(1-\sqrt{\rho})^2}(F(\bar{x}_k)-F^*)\\&+\sum_{k=1}^K(10L_0\alpha^2_k-2\alpha_k)(F(\bar{x}_k)-F^*)+\sum_{k=1}^K\frac{\alpha^2_k\sigma^2}{N}
    \end{split}
\end{equation}
We rewrite the last inequality as follows:
\begin{equation}
    \begin{split}
        \sum_{k=1}^K\|\bar{x}_{k+1}-x^*\|^2&\leq\sum_{k=1}^K\|\bar{x}_{k}-x^*\|^2 +\sum_{k=1}^K\bigg(8L_0^2\alpha^2_k+6\alpha_kL_0\bigg)\frac{6\alpha^2_k(\sigma^2+\delta^2)}{(1-\sqrt{\rho})^2}\\&+\sum_{k=1}^K\bigg(\bigg(8L_0^2\alpha^2_k+6\alpha_kL_0\bigg)\frac{60\alpha^2_kL_0}{(1-\sqrt{\rho})^2}+10L_0\alpha^2_k-2\alpha_k\bigg)(F(\bar{x}_k)-F^*)\\&+\sum_{k=1}^K\frac{\alpha^2_k\sigma^2}{N}
    \end{split}
\end{equation}
As $\alpha_k\leq\frac{1-\sqrt{\rho}}{20L_0}$, then we have $\bigg(8L_0^2\alpha^2_k+6\alpha_kL_0\bigg)\frac{60\alpha^2_kL_0}{(1-\sqrt{\rho})^2}+10L_0\alpha^2_k-2\alpha_k\leq -\frac{1}{2}\alpha_k$. Then, the above inequality becomes 
\begin{equation}
    \begin{split}
        \sum_{k=1}^K\frac{1}{2}\alpha_k(F(\bar{x}_k)-F^*)&\leq\sum_{k=1}^K\|\bar{x}_{k}-x^*\|^2-\sum_{k=1}^K\|\bar{x}_{k+1}-x^*\|^2\\&+\bigg(8L_0^2\alpha^2_k+6\alpha_kL_0\bigg)\frac{6\alpha^2_k(\sigma^2+\delta^2)}{(1-\sqrt{\rho})^2}+\sum_{k=1}^K\frac{\alpha^2_k\sigma^2}{N}
    \end{split}
\end{equation}
We divide both sides by $\frac{1}{2}\alpha_k$ and $K$, apply the condition $\alpha_k\leq\frac{1}{\sqrt{K}}$ and take the expectation to obtain the following relationship
\begin{equation}
\begin{split}
    \frac{1}{K}\sum_{k=1}^K\mathbb{E}[F(\bar{x}_k)-F^*]&\leq \frac{2\|\bar{x}_1-x^*\|^2}{\sqrt{K}}+\frac{96L_0^2(\sigma^2+\delta^2)}{K^{3/2}(1-\sqrt{\rho})^2}+\frac{72L_0(\sigma^2+\delta^2)}{K(1-\sqrt{\rho})^2}+\frac{2\sigma^2}{N\sqrt{K}}.
\end{split}
\end{equation}
As the desired accuracy is $\epsilon$, combining both cases and conduct some simple mathematical manipulation can attain the desirable results.
\end{proof}
\subsection{Proof for Theorem~\ref{theorem_4}}
\begin{proof}
    We first parameterize the path between $\bar{x}_{k+1}$ and $\bar{x}_k$ as follows:
    \begin{equation}
        l(t) = t(\bar{x}_{k+1}-\bar{x}_k) + \bar{x}_k, \; \forall t\in[0,1]
    \end{equation}
    Since $\bar{x}_{k+1}=\bar{x}_k-\alpha_k\frac{1}{N}\sum_{i=1}^Ng^i_k$, resorting to Taylor's Theorem, the Triangle Inequality, and Cauthy-Schwartz Inequality, we have the following relationship
    \begin{equation}
        F(\bar{x}_{k+1})\leq F(\bar{x}_k) + \alpha_k\langle\nabla F(\bar{x}_k),\frac{1}{N}\sum_{i=1}^Ng^i_k\rangle+\frac{\|\bar{x}_k-\bar{x}_k\|^2}{2}\int_{0}^1\|\nabla^2F(l(t))\|dt.
    \end{equation}
    As $\|\nabla F(l(t))\|\leq 4(\frac{L_0}{L_1}+\|\nabla F(\bar{x}_k)\|)$ based on Lemma~\ref{lemma_10}, with the fact that $F(x)$ is $(L_0,L_1)$-smooth, we can obtain the following descent inequality, taking expectation on both sides:
    \begin{equation}\label{eq_15}
        \mathbb{E}[F(\bar{x}_{k+1})]\leq F(\bar{x}_k) -\underbrace{\mathbb{E}[\alpha_k\langle\nabla F(\bar{x}_k),\frac{1}{N}\sum_{i=1}^Ng^i_k\rangle]}_{T_1}+\underbrace{\frac{5L_0+4L_1\|\nabla F(\bar{x}_k)\|}{2}\mathbb{E}[\alpha_k^2\|\frac{1}{N}\sum_{i=1}^Ng^i_k\|^2]}_{T_2}.
    \end{equation}
    Different from the analysis in centralized setting, our goal in decentralized setting is to find out the complexity bound for $K$ when $\underset{0\leq k\leq K}{\textnormal{min}}\|\nabla F(\bar{x})_k\|\leq \epsilon$. However, due to the distinction among $\nabla F(\bar{x}_k), \frac{1}{N}\sum_{i=1}^Ng^i_k$, and $\frac{1}{N}\sum_{i=1}^N\nabla f^i(x^i_k)$, existing techniques for centralized settings in~\cite{vankov2024optimizing,gorbunov2024methods,lobanov2024linear} cannot be directly applied. Therefore, we need to bound separately $T_1$ and $T_2$. 
    Based on Lemma~\ref{lemma_11}, we can obtain the lower bound for $T_1$ is as follows:
    \begin{equation}\label{eq_16}
    \begin{split}
        T_1&\geq \frac{1}{2}\mathbb{E}[\alpha_k\|\nabla F(\bar{x}_k)\|^2]+\frac{1}{2}\mathbb{E}[\alpha_k\|\frac{1}{N}\sum_{i=1}^N\nabla f^i(x^i_k)\|^2]\\&-\frac{1}{2}\mathbb{E}[\alpha_k\frac{1}{N}\sum_{i=1}^N(L_0+L_1\|\nabla f^i(\bar{x}_k)\|)^2\|x^i_k-\bar{x}_k\|^2].
    \end{split}
    \end{equation}
    As $\alpha_k\leq\frac{1}{(L_0+L_1\|\nabla f^i(\bar{x}_k)\|)^2}$, we have
    \begin{equation}\label{eq_17}
        T_1\geq \frac{1}{2}\mathbb{E}[\alpha_k\|\nabla F(\bar{x}_k)\|^2]+\frac{1}{2}\mathbb{E}[\alpha_k\|\frac{1}{N}\sum_{i=1}^N\nabla f^i(x^i_k)\|^2]-\frac{1}{2}\underbrace{\mathbb{E}[\frac{1}{N}\sum_{i=1}^N\|x^i_k-\bar{x}_k\|^2]}_{T_3}.
    \end{equation}
    We next bound $T_3$. Based on Eq.~\ref{eq_43} (in Lemma~\ref{lemma_8}) in the analysis for the convex case, it is obtained that:
    \begin{equation}\label{eq_18}
        T_3\leq \frac{3(\sigma^2+\delta^2)\alpha_k^2}{(1-\sqrt{\rho})^2}+\frac{3\alpha_k^2}{(1-\sqrt{\rho})^2}\mathbb{E}[\|\frac{1}{N}\sum_{i=1}^N\nabla f^i(x^i_k)\|^2].
    \end{equation}
    We now analyze $T_2$. According to Lemma~\ref{lemma_12}, the following relationship is obtained:
    \begin{equation}\label{eq_19}
        T_2\leq \frac{(5L_0+2L_1\sigma)\alpha_k^2\sigma^2}{N}+\frac{1}{4}\mathbb{E}[\|\nabla F(\bar{x}_k)\|^2\alpha_k] + (5L_0+4L_1\|\nabla F(\bar{x}_k)\|)\mathbb{E}[\alpha_k^2\|\frac{1}{N}\sum_{i=1}^N\nabla f^i(x^i_k)\|^2]
    \end{equation}
    Substituting Eq.~\ref{eq_18} into Eq.~\ref{eq_17}, and substituting Eqs.~\ref{eq_17},~\ref{eq_19} into Eq.~\ref{eq_15} yield the following relationship:
    \begin{equation}\label{eq_20}
        \begin{split}
            \mathbb{E}[F(\bar{x}_{k+1})]&\leq F(\bar{x}_k) - \frac{1}{2}\mathbb{E}[\alpha_k\|\nabla F(\bar{x}_k)\|^2]-\frac{1}{2}\mathbb{E}[\alpha_k\|\frac{1}{N}\sum_{i=1}^N\nabla f^i(x^i_k)\|^2]\\&+\frac{3(\sigma^2+\delta^2)\alpha_k^2}{2(1-\sqrt{\rho})^2}+\frac{3}{2(1-\sqrt{\rho})^2}\mathbb{E}[\alpha_k^2\|\frac{1}{N}\sum_{i=1}^N\nabla f^i(x^i_k)\|^2]\\&+\frac{(5L_0+2L_1\sigma)\alpha_k^2\sigma^2}{N}+\frac{1}{4}\mathbb{E}[\|\nabla F(\bar{x}_k)\|^2\alpha_k]\\&+(5L_0+4L_1\|\nabla F(\bar{x}_k)\|)\mathbb{E}[\alpha_k^2\|\frac{1}{N}\sum_{i=1}^N\nabla f^i(x^i_k)\|^2].
        \end{split}
    \end{equation}
    As $\frac{1}{2}-\frac{3}{2(1-\sqrt{\rho})^2}\alpha_k-(5L_0+4L_1\|\nabla F(\bar{x}_k)\|)\alpha_k\geq 0$, Eq.~\ref{eq_20} can be rewritten as:
    \begin{equation}
        \begin{split}
            \mathbb{E}[F(\bar{x}_{k+1})]&\leq F(\bar{x}_k)- \frac{1}{4}\mathbb{E}[\alpha_k\|\nabla F(\bar{x}_k)\|^2]+\frac{3(\sigma^2+\delta^2)\alpha_k^2}{2(1-\sqrt{\rho})^2}+\frac{(5L_0+2L_1\sigma)\alpha_k^2\sigma^2}{N}.
        \end{split}
    \end{equation}
    The rest of the proof follows similarly from the proof of Theorem 7 in~\cite{zhang2019gradient}. 
\end{proof}

\subsection{Experimental Setup and Additional Results}
\subsubsection{Dataset}
In this section, we provide the additional information for all the datasets we use in this work.

\textbf{a9a:} This is a dataset for classification problem, which involves 2 classes. The number of training data samples is 32561 and the number of testing data samples is 16281. It has 123 features. The data is for training logistic regression and multilayer perceptron (MLP) models. The data source link is: \texttt{https://www.csie.ntu.edu.tw/~cjlin/libsvmtools/datasets/binary.html}.

\textbf{MNIST.} This is a handwritten digit dataset involves 10 classes (from 0 to 9). The number of training samples is 60000 and the number of testing samples is 10000. The link to access the data is: \texttt{http://yann.lecun.com/exdb/mnist/}.

\textbf{Breast cancer.} This is classification dataset. Features are computed from a digitized image of a fine needle aspirate (FNA) of a breast mass. They describe characteristics of the cell nuclei present in the image. Class distribution: 357 benign, 212 malignant. The number of data samples is 569 and the number features is 30. The link to access the data is: \texttt{https://archive.ics.uci.edu/dataset/17/breast+cancer+wisconsin+diagnostic}.

\textbf{FashionMNIST.} Fashion-MNIST is a dataset of Zalando's article images—consisting of a training set of 60000 examples and a test set of 10000 examples. Each example is a 28x28 grayscale image, associated with a label from 10 classes. The access to this data: \texttt{https://github.com/zalandoresearch/fashion-mnist}.

\textbf{KMNIST.} Kuzushiji-MNIST is a drop-in replacement for the MNIST dataset (28x28 grayscale, 70,000 images), provided in the original MNIST format as well as a NumPy format. KMNIST has 10 classes as well. The link to access the data is: \texttt{https://github.com/rois-codh/kmnist?tab=readme-ov-file}. 

\textbf{Mushrooms.} This data set includes descriptions of hypothetical samples corresponding to 23 species of gilled mushrooms in the Agaricus and Lepiota Family (pp. 500-525). It has 8124 data samples and 22 features. The link to access the data is: \texttt{https://archive.ics.uci.edu/dataset/73/mushroom}. 

\textbf{Covtype.} This dataset regarding classification of pixels into 7 forest cover types based on attributes such as elevation, aspect, slope, hillshade, soil-type, and more. The number of instances is 581012 and the number of classes is 7. There are 54 features. The link to access the data and more information is \texttt{https://archive.ics.uci.edu/dataset/31/covertype}.

\textbf{IJCNN1.} This dataset is from IJCNN 2001 neural network competition. The number of classes is 2. The training data includes 49990 samples while the testing data includes 91701 samples. The number of features is also 22. The link to access the data and more information is \texttt{https://www.csie.ntu.edu.tw/~cjlin/libsvmtools/datasets/binary.html\#ijcnn1}. 

\textbf{KDD CUP 99.} This is the data set used for The Third International Knowledge Discovery and Data Mining Tools Competition, which was held in conjunction with KDD-99 The Fifth International Conference on Knowledge Discovery and Data Mining. The competition task was to build a network intrusion detector, a predictive model capable of distinguishing between ``bad" connections, called intrusions or attacks, and ``good" normal connections. This database contains a standard set of data to be audited, which includes a wide variety of intrusions simulated in a military network environment. There are 4000000 instances. The description for the data files is here: \texttt{https://archive.ics.uci.edu/dataset/130/kdd+cup+1999+data}.

\textbf{CIFAR10.} The CIFAR10 dataset consists of 60000 32x32 colour images in 10 classes, with 6000 images per class. There are 50000 training images and 10000 test images. The link to access the data and more information is: \texttt{https://www.cs.toronto.edu/\~kriz/cifar.html}

\textbf{CIFAR100.} The CIFAR100 dataset has 100 classes containing 600 images each. There are 50000 training images and 10000 testing images totally. The 100 classes in the CIFAR-100 are grouped into 20 superclasses. Each image comes with a ``fine" label (the class to which it belongs) and a ``coarse" label (the superclass to which it belongs). The link to access the data and more information is: \texttt{https://www.cs.toronto.edu/\~kriz/cifar.html}

\textbf{Beijing PM2.5.} This is an hourly dataset that contains the PM2.5 data of US Embassy in Beijing. Meanwhile, meteorological data from Beijing Capital International Airport are also included. The number of instances is 43824 and the number features is 11. The data's time period is between Jan 1st, 2010 to Dec 31st, 2014. Missing data are denoted as ``NA". The link to access the data is \texttt{https://archive.ics.uci.edu/dataset/381/beijing+pm2+5+data}.

\textbf{Individual Household Electric Power Consumption.} This dataset contains measurements of electric power consumption in one household with a one-minute sampling rate over a period of almost 4 years. Different electrical quantities and some sub-metering values are available. The number of instances is 2075259 and the number of features is 9. This dataset is gathered in a house located in Sceaux (7km of Paris, France) between December 2006 and November 2010 (47 months). The link to access the data is \texttt{https://archive.ics.uci.edu/dataset/235/individual+household+electric+power+consumption}.

\subsubsection{Experimental Setup}
In this section, we provide the detailed information about the models and hyperparamter setup in our experiments to expedite the reproducibility of our results. We also detail the topologies in each experiment. For the hyperparameter setting, we manually tune and obtain the nearly-optimal performance. 

\textbf{Experiment Set 1: Decentralized logistic regression.} In this experiment, there are 5 agents respectively for fully-connected and ring topologies. The learning rate is 0.1 and the clipping threshold for gradient clipping is 1.0. The number of epochs is 200. We split the dataset uniformly into 5 agents. For comparison, we run $(L_0,L_1)$-DGD and vanilla DGD algorithms on \texttt{a9a, breast cancer, mushrooms, covtype, ijcnn1, and kddcup99} datasets. 

\textbf{Experiment Set 2: Decentralized Neural Network.} In this experiment, there are 5 agents respectively for fully-connected and ring topologies. The learning rate is 0.01 and the clipping threshold for gradient clipping is 1.0. The number of epochs is 350. We split the dataset uniformly into 5 agents. For comparison, we run $(L_0,L_1)$-DGD and vanilla DGD algorithms on \texttt{a9a, breast cancer, covtype, ijcnn1, and kddcup99} datasets. The model is a ultilayer perceptron (MLP) with 2 hidden layers and the activation function is \texttt{tanh}. The hidden dimension is 32 (a9a) or 64 (breast cancer). The batch size for the experiment is 16 (a9a) or 32 (breast cancer). For all other datasets, we increase the batch size to 512 due to their larger sizes of data, while the hidden dimension remains 32.

\textbf{Experiment Set 3: Decentralized Convlutional Neural Networks.} In this experiment, there are 5 agents respectively for fully-connected and ring topologies. The learning rate is 0.1 and the clipping threshold for gradient clipping is 1.0. The number of epochs is 100 (while we also adjust slightly if the dataset is large and complex, e.g., 80 for CIFAR10 and CIFAR100). We split the dataset uniformly into 5 agents. For comparison, we run $(L_0,L_1)$-DGD and vanilla DGD algorithms on \texttt{MNIST, FashionMNIST, KMNIST, CIFAR10, and CIFAR100} datasets. The batch size is 64. The model is CNN with 2 convolutional layers and 1 dropout layer (0.5). The output channels in the first convolutional layer is 32 while 64 in the second. The activation function is ReLU. Each convolutional layer is followed by a max pooling layer. There are two additional linear layers. The first linear layer has (1600, 128) for the input and output while the second has (128, 10). The output probability is through a log softmax function.

\textbf{Experiment Set 4: Decentralized Gated Recurrent Unit.} All the above experiment sets are specifically for classification. However, another popular task is regression such that in this work we use a popular standard deep recurrent network, gated recurrent unit. for implementing two regression tasks. In this experiment, there are 5 agents for fully connected topologies. The learning rate is set 0.1, and the batch size is 64. The model is 1-layer Gated Recurrent Unit (GRU), with the hidden size being 64.
The number of training epochs 350 and the clipping threshold still remains 1.0. The input sequence length is 24 and the length of output is 1. For comparison, we run $(L_0,L_1)$-DGD and vanilla DGD algorithms on \texttt{Beijing PM2.5 and household power consumption} datasets.

Note that our paper focuses primarily on shedding light on the complexity bounds in terms of theoretical analysis with the new $(L_0,L_1)$-smoothness assumption in the decentralized setting. That is the reason why we resort to relatively smaller models for validation. Additionally, we are primarily interested in the linear dependency between the smoothness and gradient norm, instead of the testing accuracy. 

\subsubsection{Hardware Compute Environment}
The hardware compute environment for all experiments is \texttt{Intel(R) Core(TM) i7-10750H CPU @ 2.60GHz   2.59 GHz} and the installed RAM is 32.0 GB (31.8 GB usable).

\subsubsection{Additional Results}
In this context, curvature, smoothness, and the Hessian norm are referred to as the same meaning in Assumption~\ref{definition_1}. We interchangeably use these three notations in the result discussion.

\textbf{Loss and Accuracy.} In this section, we include additional empirical results to validate the relationship between the smoothness and the gradient norm. We also examine the training loss and testing accuracy by comparing $(L_0,L_1)$-DGD and vanilla DGD. Figure~\ref{fig:loss_accuracy_a9a_nn} shows the training loss and testing accuracy corresponding to the experiment in Figure~\ref{fig:comparison_nn}. We can observe that $(L_0,L_1)$-DGD outperforms vanilla DGD. Also, the training loss shows that with $(L_0,L_1)$-smoothness, the convergence resembles the sublinear rate as vanilla DGD, well validating our theoretical claim. We also use another dataset breast cancer~\cite{agarap2018breast} to test the performance between $(L_0,L_1)$-DGD and vanilla DGD. The comparison in Figure~\ref{fig:loss_accuracy_breast_cancer_nn} implies the significance of gradient clipping, which assists in stabilizing the testing performance remarkably. Figure~\ref{fig:loss_accuracy_mnist_cnn} also empirically demonstrates the similar training loss and testing accuracy between $(L_0,L_1)$-DGD and vanilla DGD with the decentralized CNN and the MNIST dataset, showing the decent model performance, though we know that the former enables more significantly linear depenedency between the Hessian norm and gradient norm in Figure~\ref{fig:comparison_cnn}.

\textbf{Impact of Topology.} To investigate the impact of different topologies on the relationship between the smoothness and the gradient norm, we also implement the experiments on ring topology. Figures~\ref{fig:comparison_lr_ring_a9a}-\ref{fig:comparison_cnn_ring_minist} show empirical results on different models and datasets. Overall, the linear trend is much weaker than that in the fully-connected topology. Particularly, for logistic regression and CNN model, the negative impact is the most significant. One observation is that the values of Hessian norms are much larger, which is likely attributed to under-trained models in each case and more different data distributions. The connection in ring topology is sparser than that in fully-connected topology, demanding more time for each local model to learn. This is theoretically reflected by a larger complexity bounds due to a smaller spectral gap in our claims. However, by carefully comparing $(L_0,L_1)$-DGD and vanilla DGD, the former still results in slightly stronger linear dependency of the smoothness on the gradient norm.

\textbf{More Datasets.} To validate the dependency in more challenging datasets, we resort to two more datasets, including FashionMNIST~\cite{xiao2017fashion} and KMNIST~\cite{clanuwat2018deep} and test on CNN models. Results in Figures~\ref{fig:comparison_cnn_fashionmnist} and~\ref{fig:comparison_cnn_kmnist} still reveal the stronger linear dependency of the smoothness on the gradient norm when using $(L_0,L_1)$-DGD. To better empirically validate the linear dependency between the Hessian norm and the gradient norm in decentralized settings, we resort to a few more datasets, including Covtype, IJCNN1 and KDD CUP 99~\cite{chang2011libsvm}. Figure~\ref{fig:average_comparison_lr_covtype}-Figure~\ref{fig:average_comparison_nn_kddcup99} show the relationships between the Hessian norm and the gradient norm. In particular, with Covtype data, when using logistic regression, the linear dependency in Figure~\ref{fig:average_comparison_lr_covtype} is much weaker, and we can observe that the Hessian norm is large, even if the gradient norm remains smaller values. This is likely because the logistic regression for each local agent is not a good model, failing to capture the complex and highly nonlinear patterns in the data. When the model becomes a neural network, as shown in Figure~\ref{fig:average_comparison_nn_covtype}, the linear dependency is notably significant in both $(L_0,L_1)$-DGD and Vanilla DGD. When turning to the performance with IJCNN1, both Figure~\ref{fig:average_comparison_lr_ijcnn} and Figure~\ref{fig:average_comparison_nn_ijcnn} display nearly linear dependency, whereas the values of Hessian norm in Figure~\ref{fig:average_comparison_lr_ijcnn} are much larger, implying that logistic regression may be incapable of modeling the data well. A similar observation is also attained from Figure~\ref{fig:average_comparison_lr_kddcup99} and Figure~\ref{fig:average_comparison_nn_kddcup99}. Other two more complex datasets CIFAR10 and CIFAR 100 are also used for testing if the linear dependency holds true when the task becomes more difficult. Based on Figures~\ref{fig:average_comparison_cnn_cifar10} and~\ref{fig:average_comparison_cnn_cifar100}, it is evidently shown that $(L_0,L_1)$-DGD results in slightly more significant linear dependency than vanilla DGD, but the results stress again the variation of the smoothness constant along with the gradient norm. Though the linear dependency has been well validated via multiple datasets already, all of the above tasks are classification-based. We next turn to regression task and examine if such an dependency is still valid. Figure~\ref{fig:average_comparison_gru_pm2.5} and Figure~\ref{fig:average_comparison_gru_power_com} present the performance of decentralized GRU models on two time-series datasets, including PM2.5 and household power consumption prediction. Compared to the previous classification tasks, though both tasks do not show clearly the linear dependency between the Hessian norm and gradient norm, overall, $(L_0,L_1)$-DGD leads to slightly stronger linear relationship than Vanilla DGD. The weaker dependency may be attributed to the temporal correlations in the data, which can influence the local smoothness. 

\textbf{Performance of Individual Agents.} We also look into the dependency of each local model in multiple experiments with the fully-connected topology. In Figures~\ref{fig:comparison_nn_individual_clip}, \ref{fig:comparison_lr_breast_cancer_individual}, and~\ref{fig:comparison_lr_mushroom_individual}, each local agent clearly shows the linear-dependency between Hessian norm and gradient norm. Particularly, when the model is convex, such a dependency is more remarkable since the models are easy to learn. Finally, by comparing Figures~\ref{fig:comparison_nn_individual_clip} and \ref{fig:comparison_nn_individual_non_clip} for each local agent with different optimization methods, it also emphasizes again the strong dependency in $(L_0,L_1)$-DGD. To summarize, the additional results have further validated the smoothness dependency on the gradient norm in decentralized setting and supported our theoretical claims.

\textbf{Impact of Scalability.} When increasing the number of agents in a topology, we wonder if the linear dependency between the Hessian norm and the gradient norm can still maintain. Thus, we implement experiments on 10 and 20 agents. As shown in Figure~\ref{fig:average_comparison_lr_a9a_10} and Figure~\ref{fig:average_comparison_lr_a9a_20}, an immediate observation is that increasing the number of agents does not change the linear dependency, though with more agents, the convergence can be harder to achieve. When the local model becomes a neural network, surprisingly, the linear dependency exhibits a stronger trend when more agents are involved based on Figure~\ref{fig:average_comparison_nn_a9a_10} and Figure~\ref{fig:average_comparison_nn_a9a_20}. However, when the classification task becomes more difficult such as in Figure~\ref{fig:average_comparison_cnn_mnist_10} and Figure~\ref{fig:average_comparison_cnn_mnist_20}, the linear dependency becomes weaker when the number of agents is larger. Also, $(L_0,L_1)$-DGD still shows the linear smoothness dependency on the gradient norm. 

\begin{figure}
    \centering
    \includegraphics[width=1\linewidth]{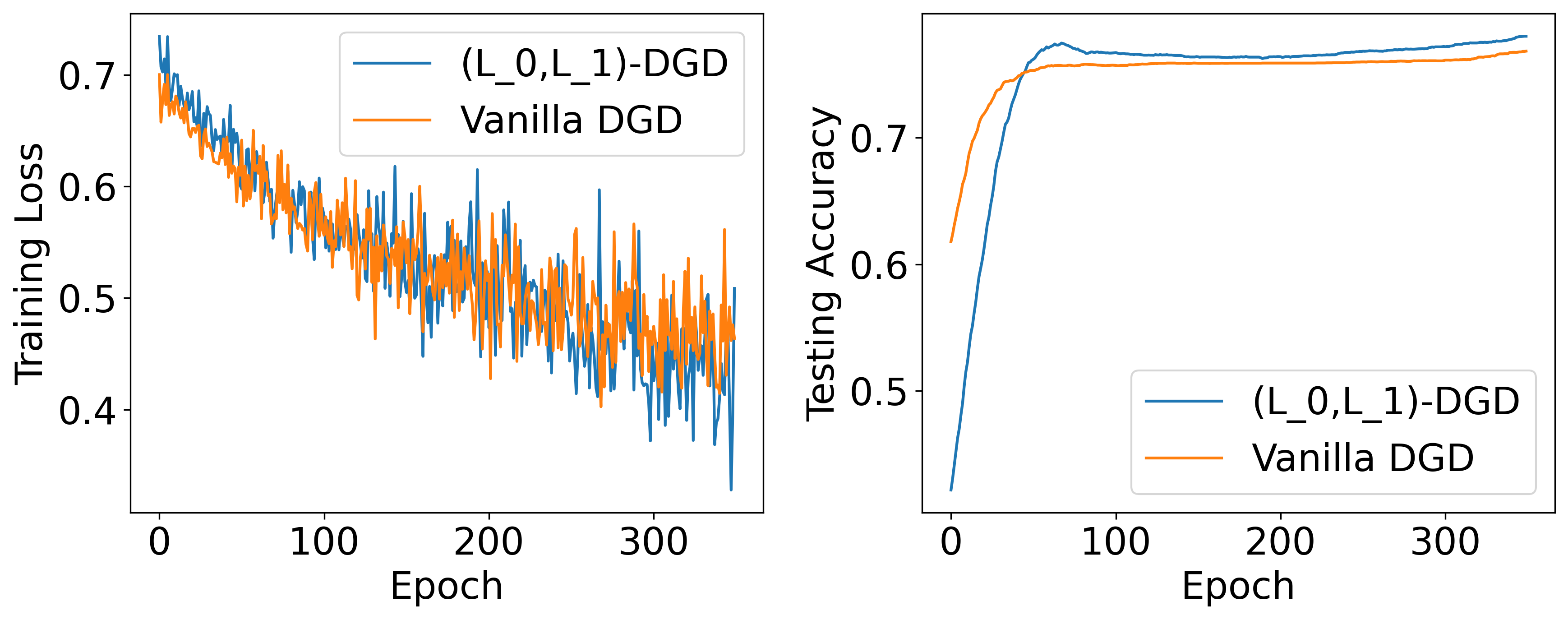}
    \caption{Training loss and testing accuracy for decentralized neural networks with a9a dataset with different methods with a fully-connected topology}
    \label{fig:loss_accuracy_a9a_nn}
\end{figure}

\begin{figure}
    \centering
    \includegraphics[width=1\linewidth]{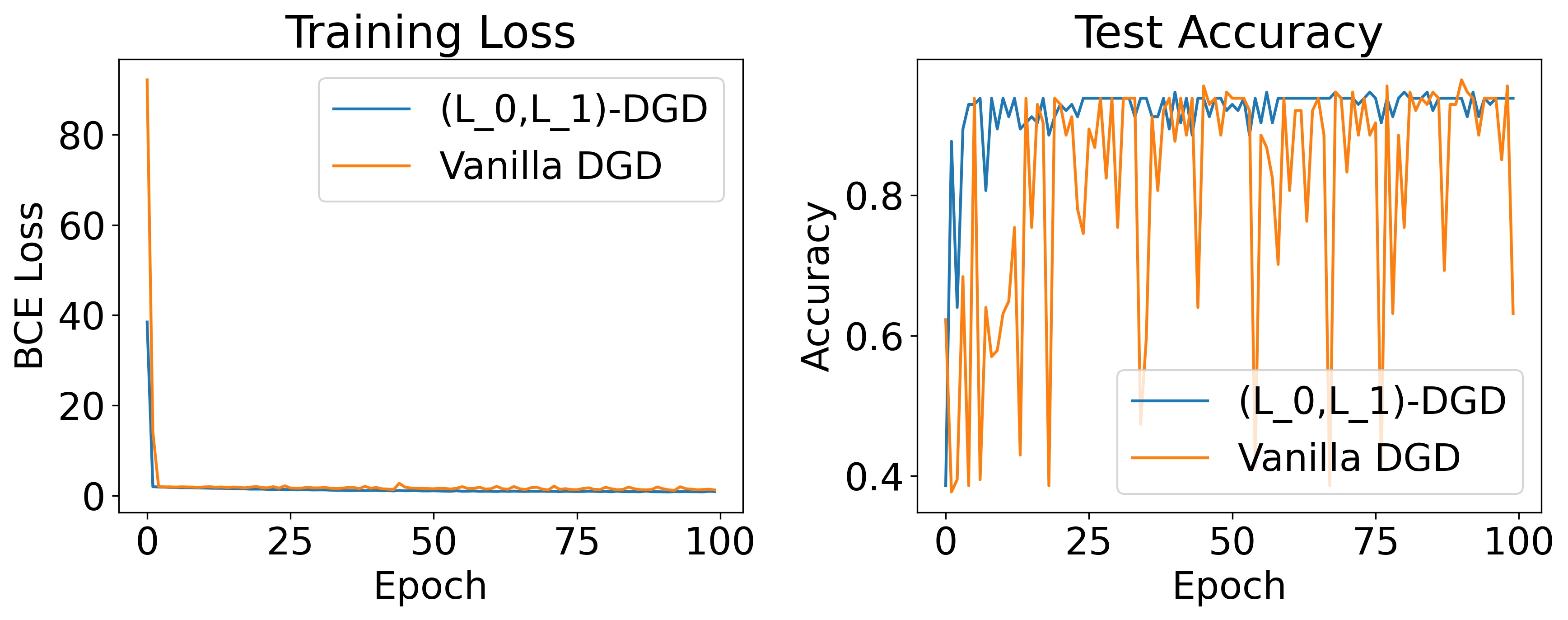}
    \caption{Training loss and testing accuracy for decentralized neural networks with breast cancer dataset~\cite{agarap2018breast} with different methods with a fully-connected topology}
    \label{fig:loss_accuracy_breast_cancer_nn}
\end{figure}

\begin{figure}
    \centering
    \includegraphics[width=1\linewidth]{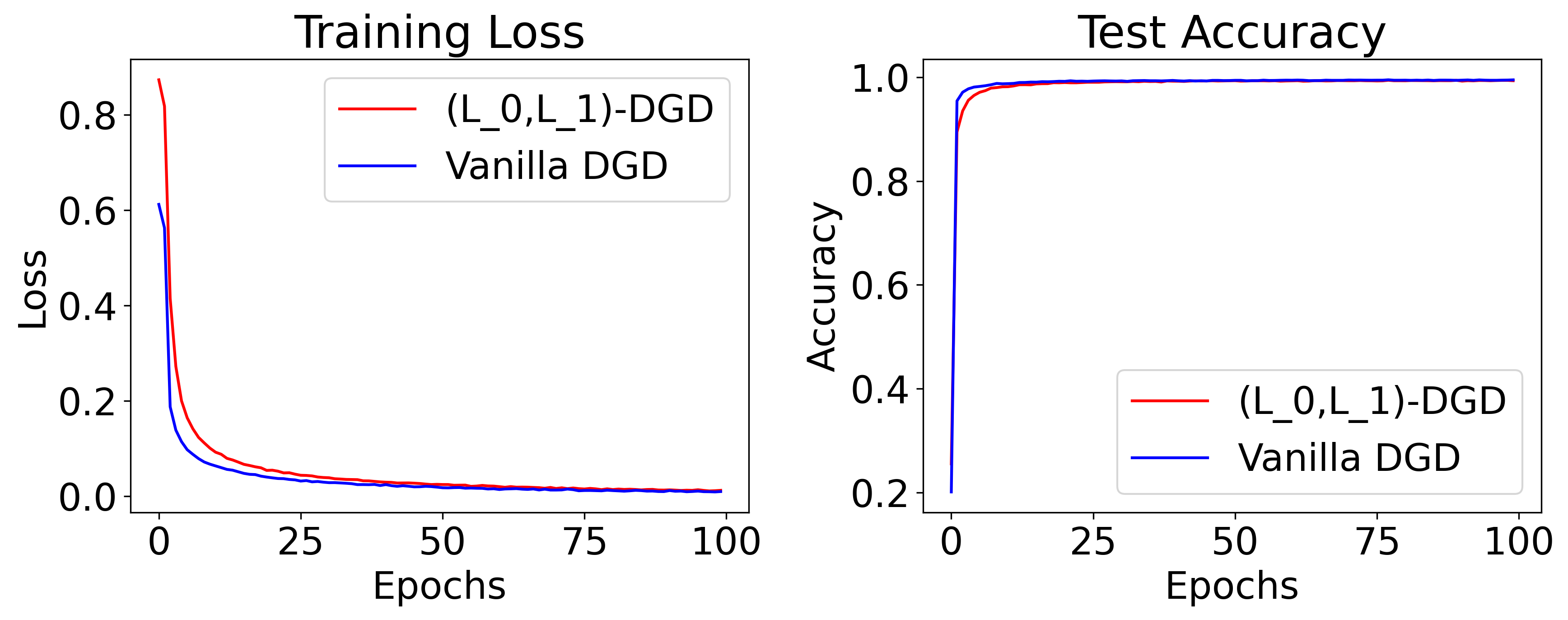}
    \caption{Training loss and testing accuracy for decentralized CNN with MNIST dataset with different methods with a fully-connected topology}
    \label{fig:loss_accuracy_mnist_cnn}
\end{figure}

\begin{figure}
    \centering
    \includegraphics[width=1\linewidth]{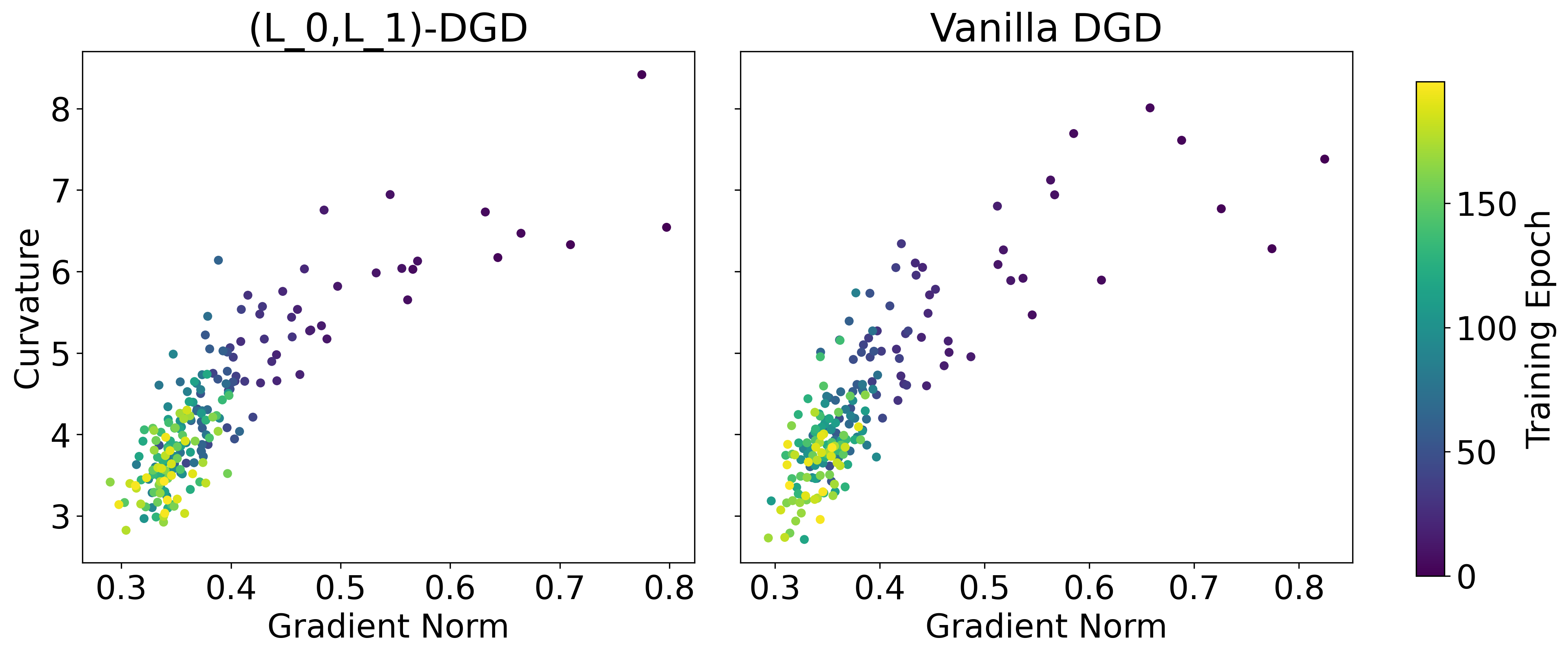}
    \caption{Smoothness dependency on the gradient norm for decentralized logistic regression with a9a dataset for averaged model with a ring topology}
    \label{fig:comparison_lr_ring_a9a}
\end{figure}

\begin{figure}
    \centering
    \includegraphics[width=1\linewidth]{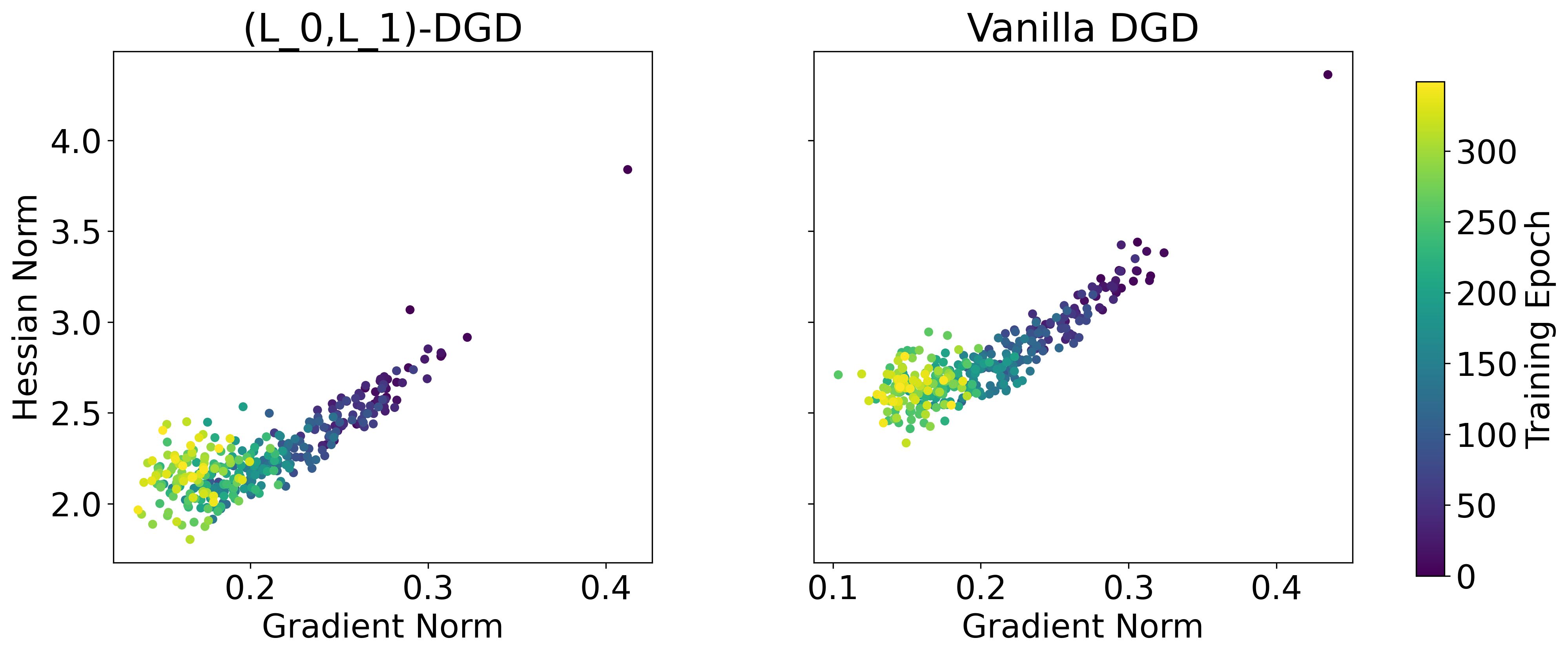}
    \caption{Smoothness dependency on the gradient norm for decentralized neural networks with a9a dataset for averaged model with a ring topology}
    \label{fig:comparison_nn_ring_a9a}
\end{figure}

\begin{figure}
    \centering
    \includegraphics[width=1\linewidth]{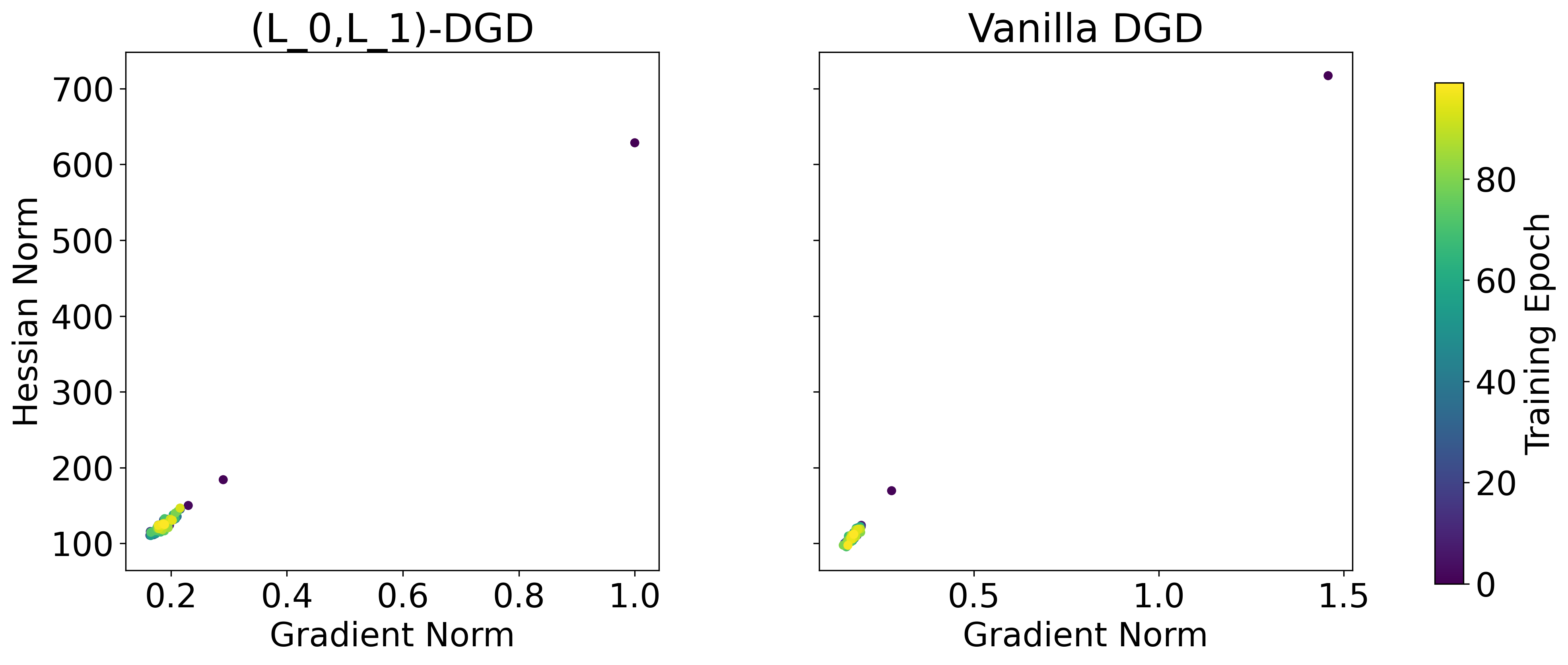}
    \caption{Smoothness dependency on the gradient norm for decentralized CNN with MNIST dataset for averaged model with a ring topology}
    \label{fig:comparison_cnn_ring_minist}
\end{figure}

\begin{figure}
    \centering
    \includegraphics[width=1\linewidth]{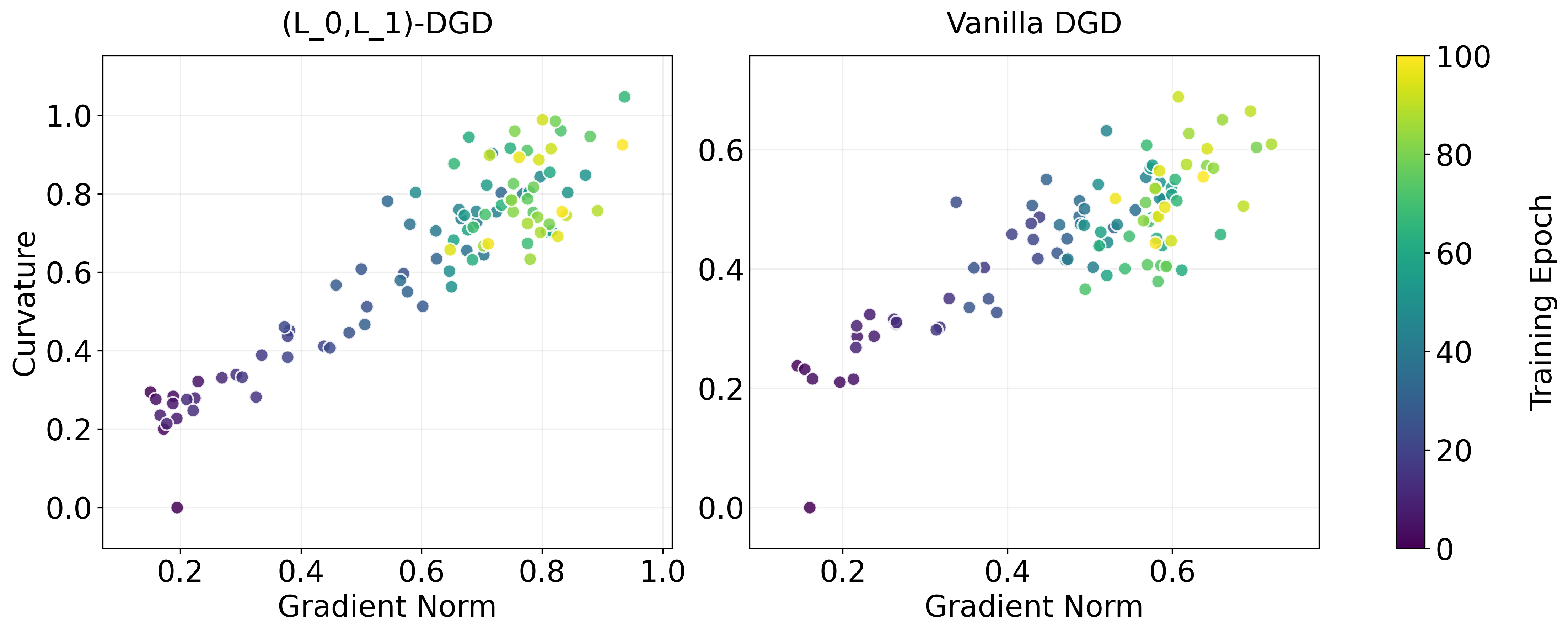}
    \caption{Smoothness dependency on the gradient norm for decentralized CNN with FashionMNIST~\cite{xiao2017fashion} dataset for averaged model with a fully-connected topology}
    \label{fig:comparison_cnn_fashionmnist}
\end{figure}

\begin{figure}
    \centering
    \includegraphics[width=1\linewidth]{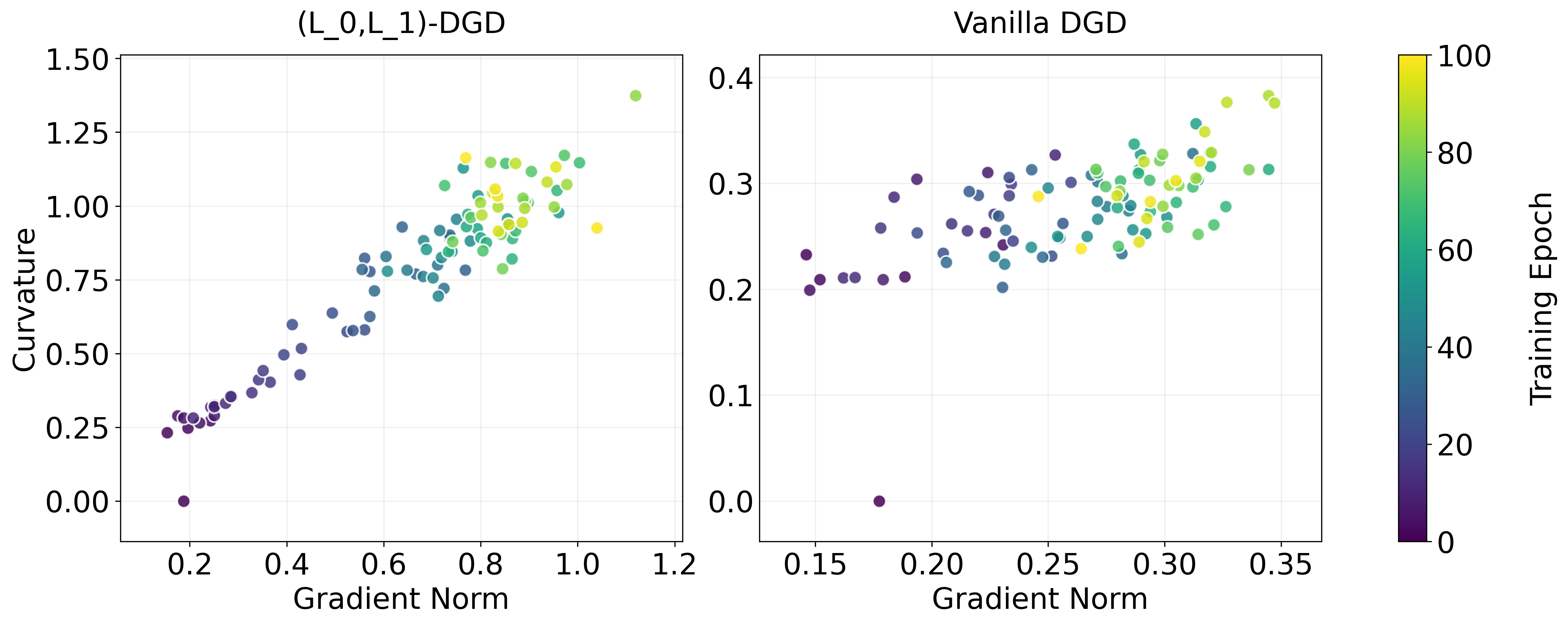}
    \caption{Smoothness dependency on the gradient norm for decentralized CNN with KMNIST~\cite{clanuwat2018deep} dataset for averaged model with a fully-connected topology}
    \label{fig:comparison_cnn_kmnist}
\end{figure}

\begin{figure}
    \centering
    \includegraphics[width=1\linewidth]{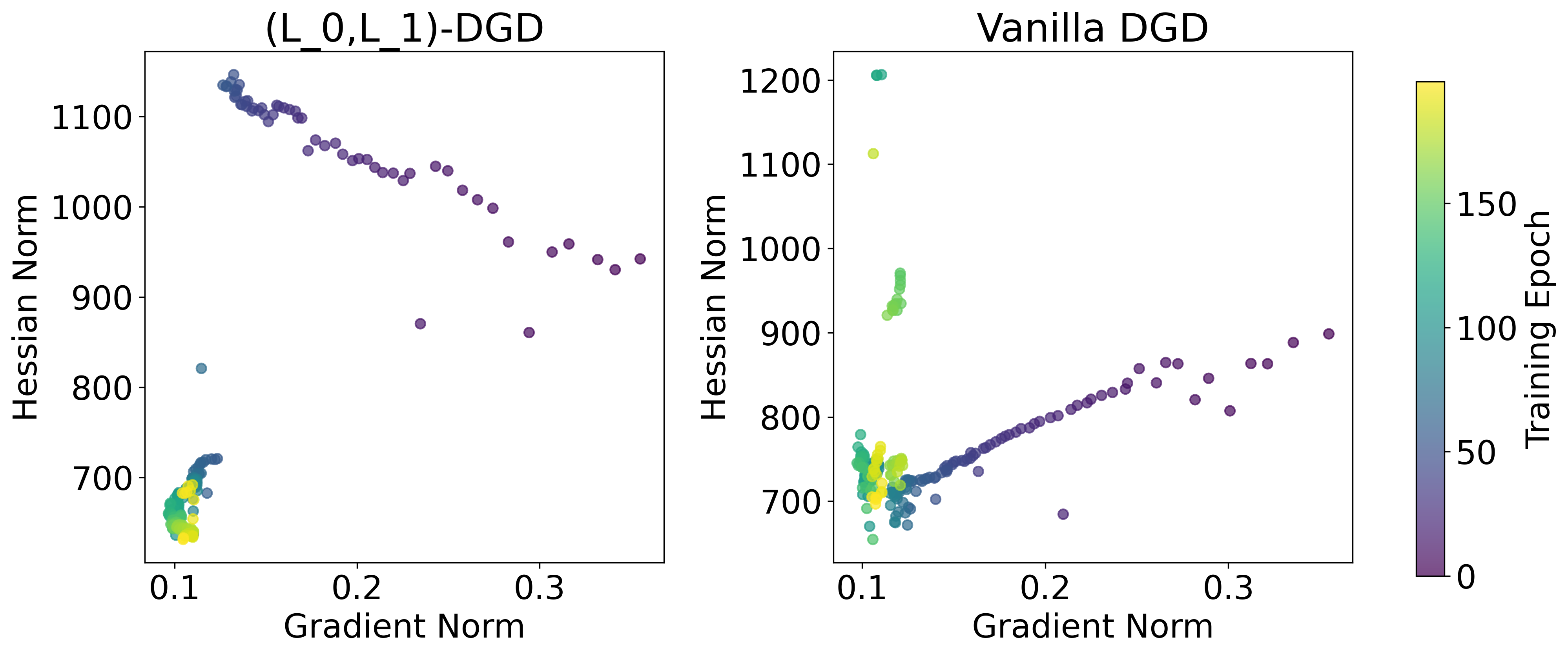}
    \caption{Smoothness dependency on the gradient norm for decentralized logistic regression with Covtype dataset for averaged model with a fully-connected topology and 5 agents}
    \label{fig:average_comparison_lr_covtype}
\end{figure}

\begin{figure}
    \centering
    \includegraphics[width=1\linewidth]{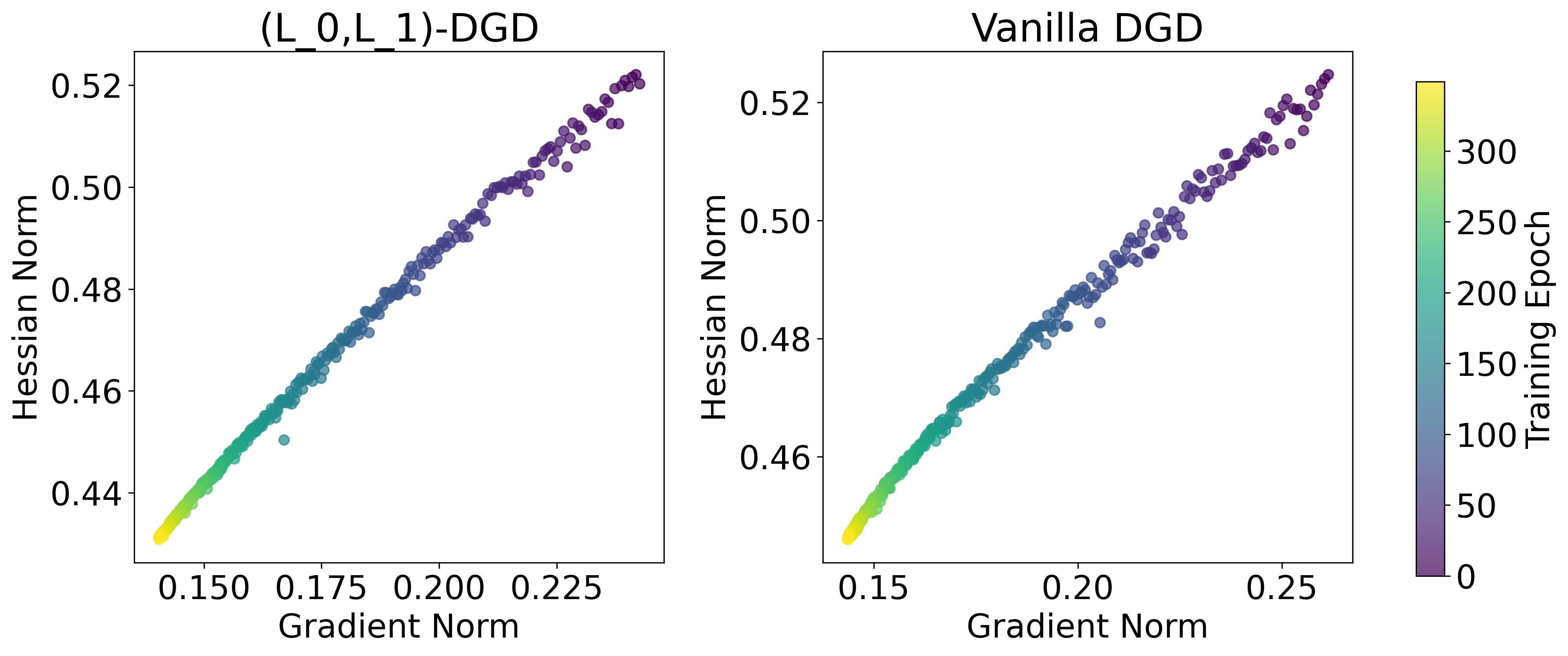}
    \caption{Smoothness dependency on the gradient norm for decentralized neural networks with Covtype dataset for averaged model with a fully-connected topology and 5 agents}
    \label{fig:average_comparison_nn_covtype}
\end{figure}

\begin{figure}
    \centering
    \includegraphics[width=1\linewidth]{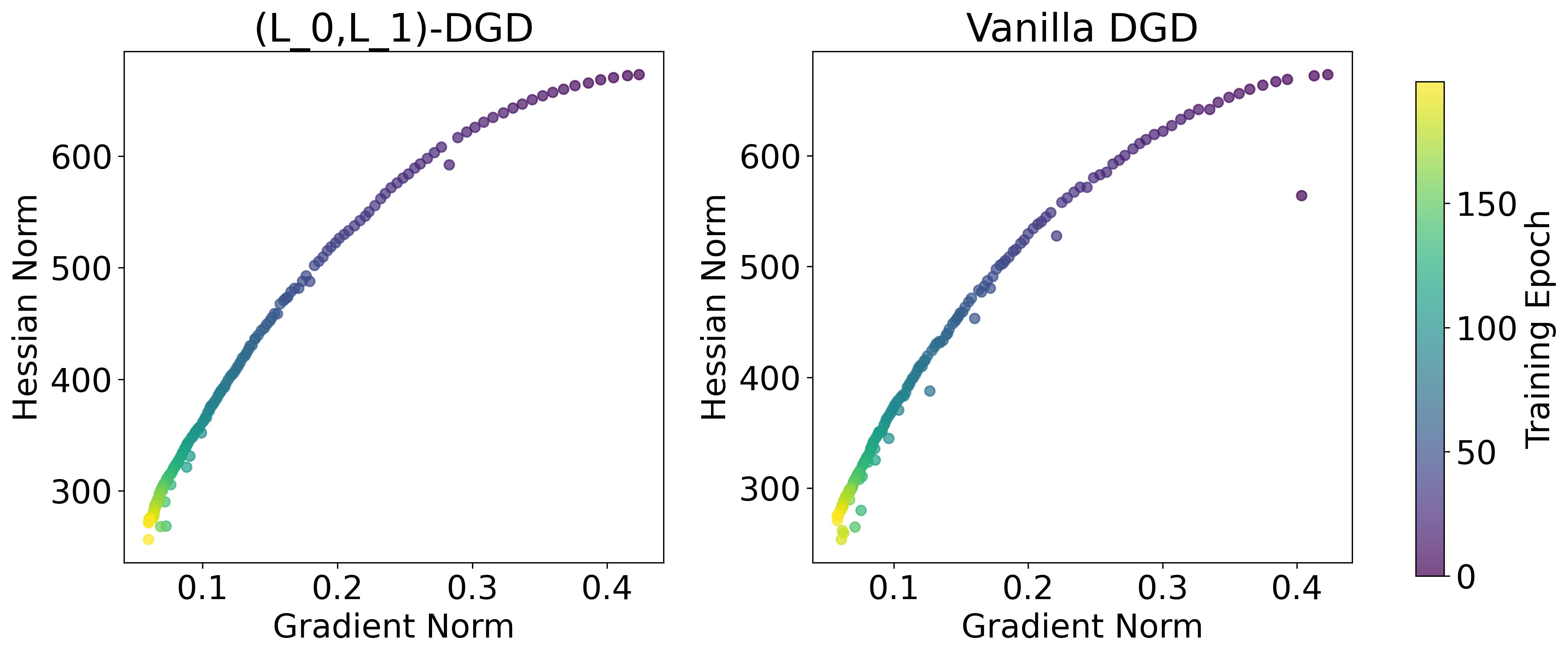}
    \caption{Smoothness dependency on the gradient norm for decentralized logistic regression with IJCNN1 dataset for averaged model with a fully-connected topology and 5 agents}
    \label{fig:average_comparison_lr_ijcnn}
\end{figure}

\begin{figure}
    \centering
    \includegraphics[width=1\linewidth]{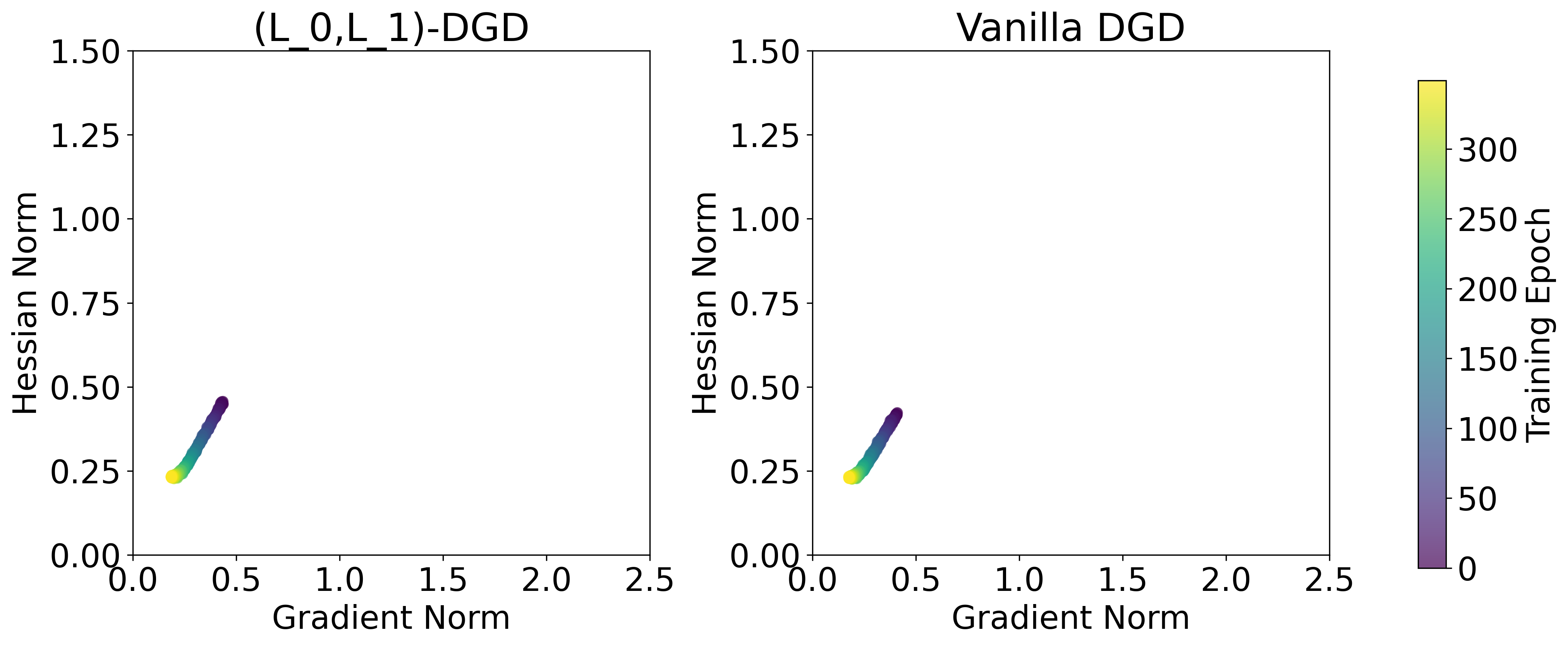}
    \caption{Smoothness dependency on the gradient norm for decentralized neural networks with IJCNN1 dataset for averaged model with a fully-connected topology and 5 agents}
    \label{fig:average_comparison_nn_ijcnn}
\end{figure}

\begin{figure}
    \centering
    \includegraphics[width=1\linewidth]{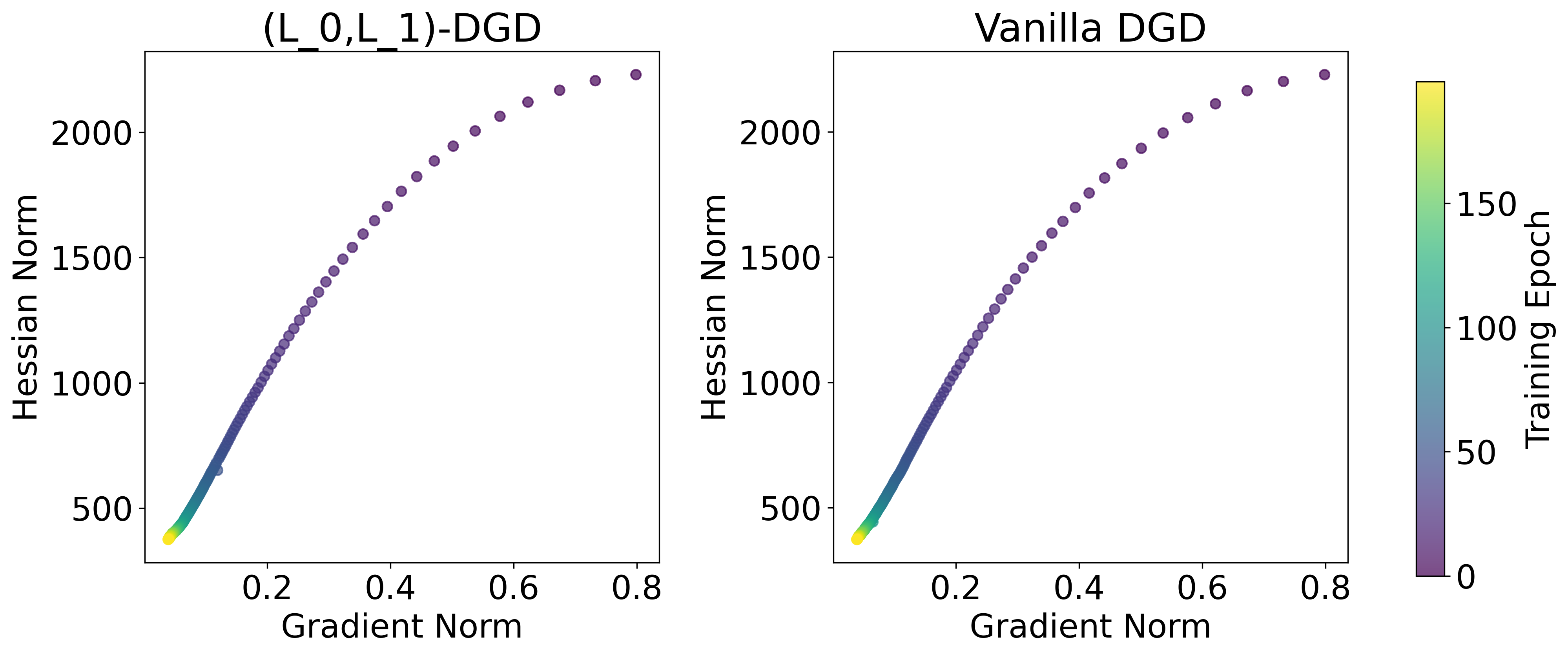}
    \caption{Smoothness dependency on the gradient norm for decentralized logistic regression with KDD CUP 99 dataset for averaged model with a fully-connected topology and 5 agents}
    \label{fig:average_comparison_lr_kddcup99}
\end{figure}

\begin{figure}
    \centering
    \includegraphics[width=1\linewidth]{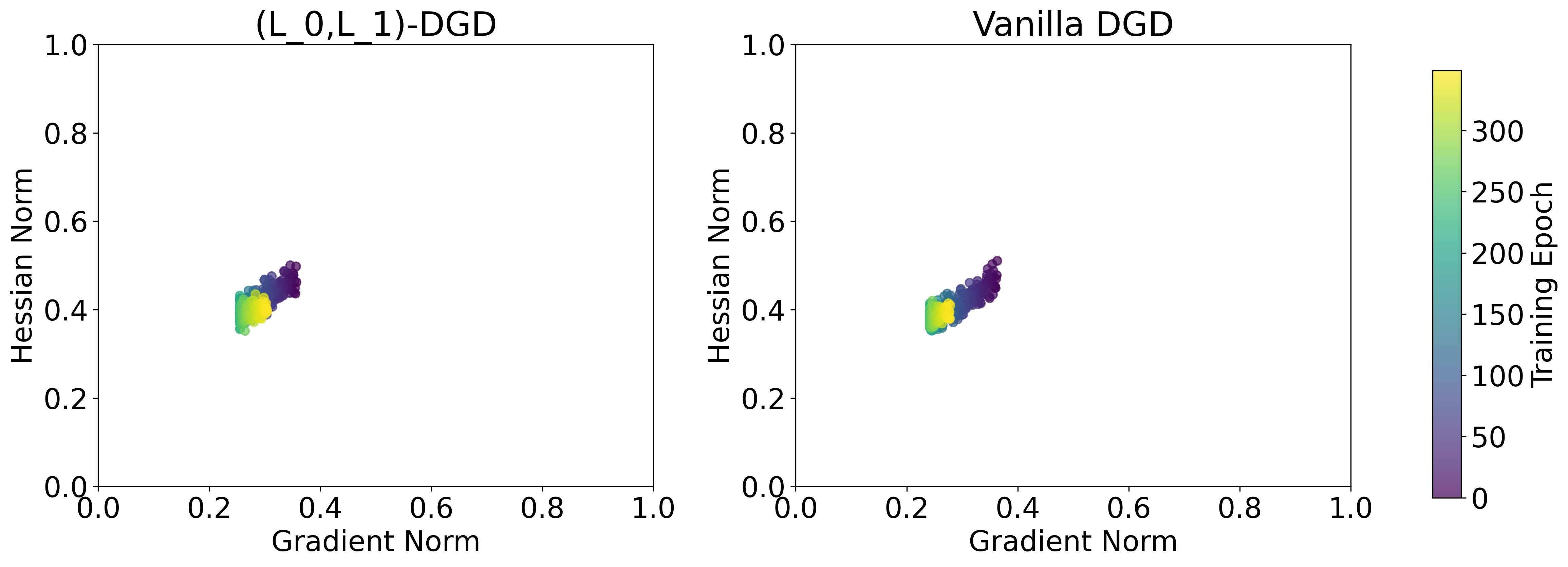}
    \caption{Smoothness dependency on the gradient norm for decentralized neural networks with KDD CUP 99 dataset for averaged model with a fully-connected topology and 5 agents}
    \label{fig:average_comparison_nn_kddcup99}
\end{figure}

\begin{figure}
    \centering
    \includegraphics[width=1\linewidth]{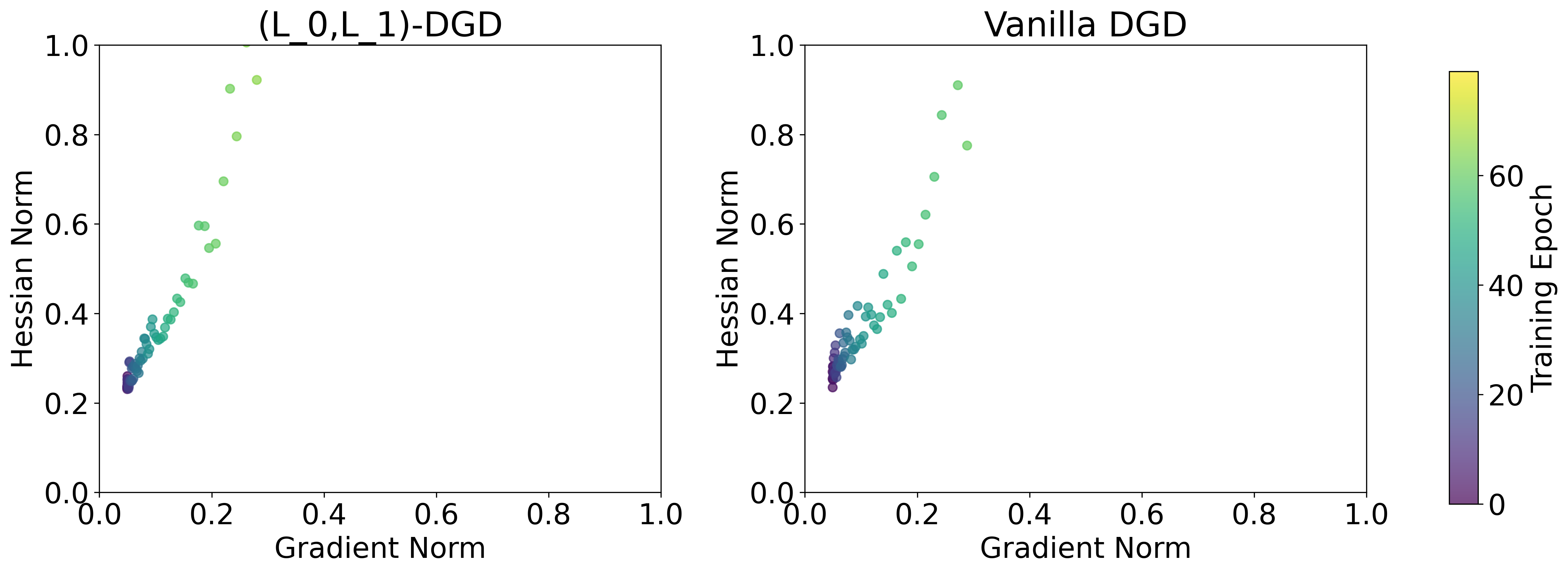}
    \caption{Smoothness dependency on the gradient norm for decentralized CNN with CIFAR 10 dataset~\cite{recht2018cifar} for averaged model with a fully-connected topology and 5 agents}
    \label{fig:average_comparison_cnn_cifar10}
\end{figure}

\begin{figure}
    \centering
    \includegraphics[width=1\linewidth]{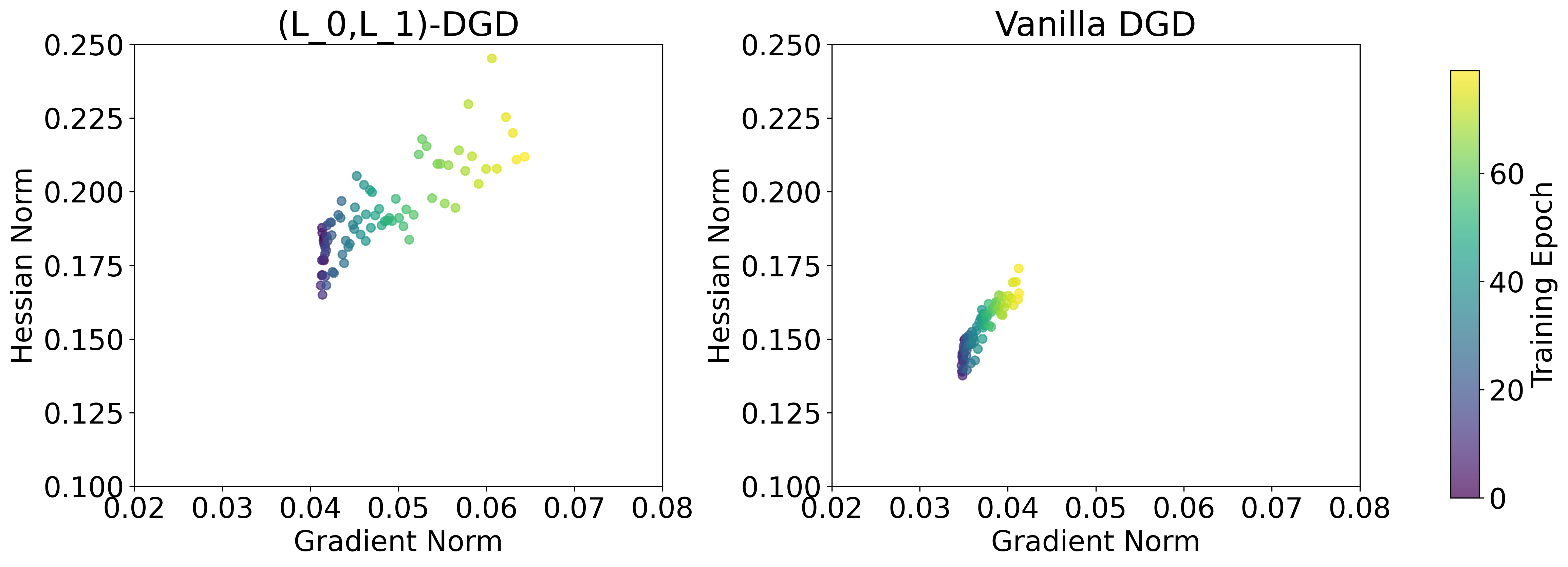}
    \caption{Smoothness dependency on the gradient norm for decentralized CNN with CIFAR 100 dataset~\cite{singla2021improved} for averaged model with a fully-connected topology and 5 agents}
    \label{fig:average_comparison_cnn_cifar100}
\end{figure}

\begin{figure}
    \centering
    \includegraphics[width=1\linewidth]{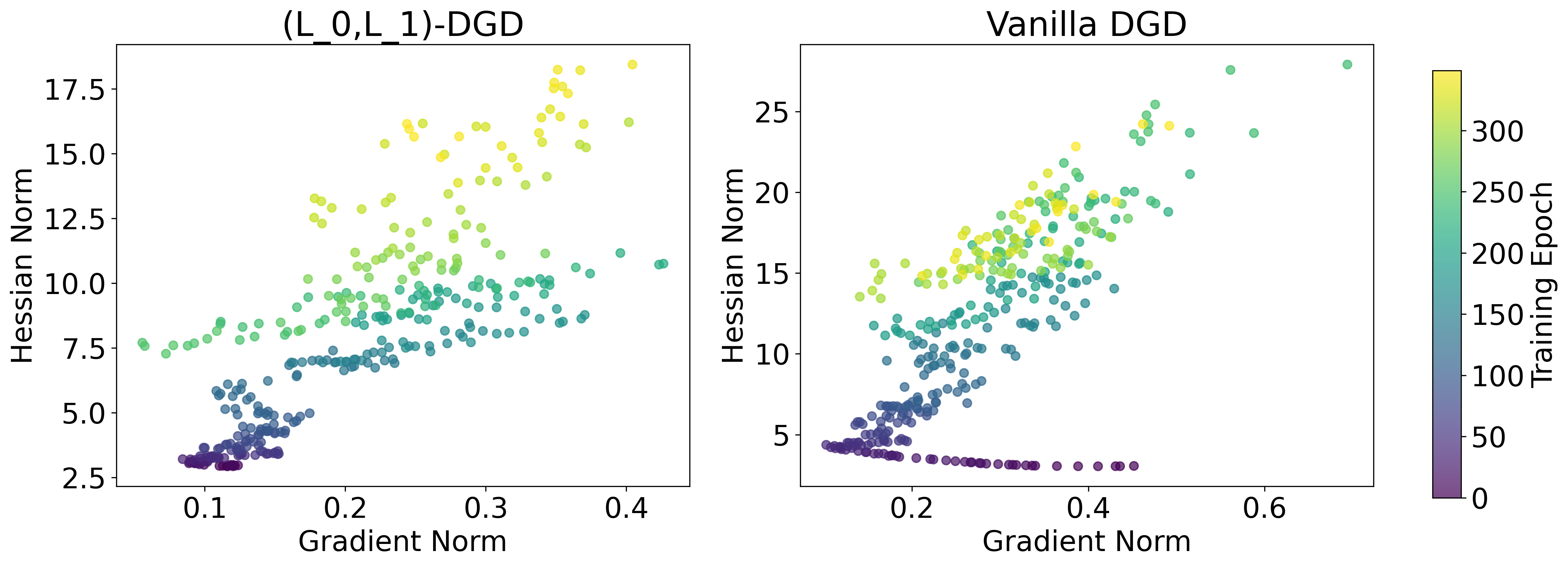}
    \caption{Smoothness dependency on the gradient norm for decentralized GRU with Beijing PM2.5 dataset~\cite{wang2018characteristics} for averaged model with a fully-connected topology and 5 agents}
    \label{fig:average_comparison_gru_pm2.5}
\end{figure}

\begin{figure}
    \centering
    \includegraphics[width=1\linewidth]{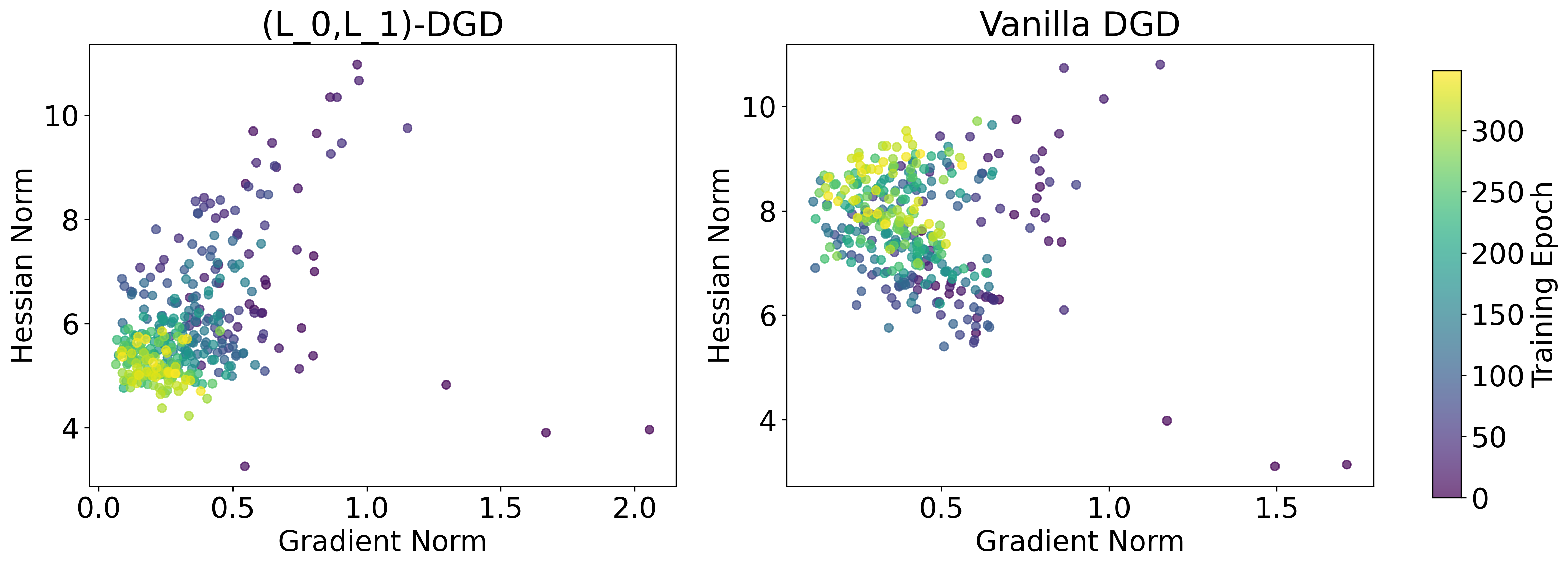}
    \caption{Smoothness dependency on the gradient norm for decentralized GRU with individual household power consumption dataset~\cite{yang2020household} for averaged model with a fully-connected topology and 5 agents}
    \label{fig:average_comparison_gru_power_com}
\end{figure}

\begin{figure}
    \centering
    \includegraphics[width=1\linewidth]{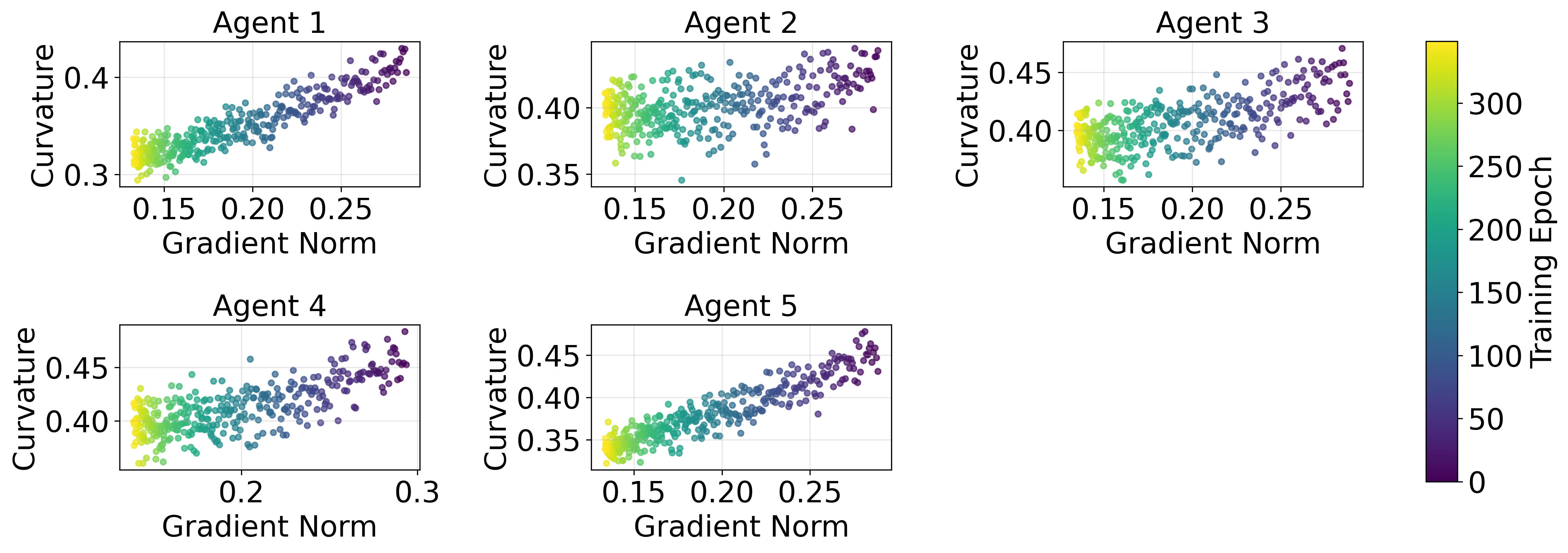}
    \caption{Smoothness dependency on the gradient norm for decentralized neural networks with a9a dataset for individual agents with a fully-connected topology and $(L_0,L_1)$-DGD}
    \label{fig:comparison_nn_individual_clip}
\end{figure}

\begin{figure}
    \centering
    \includegraphics[width=1\linewidth]{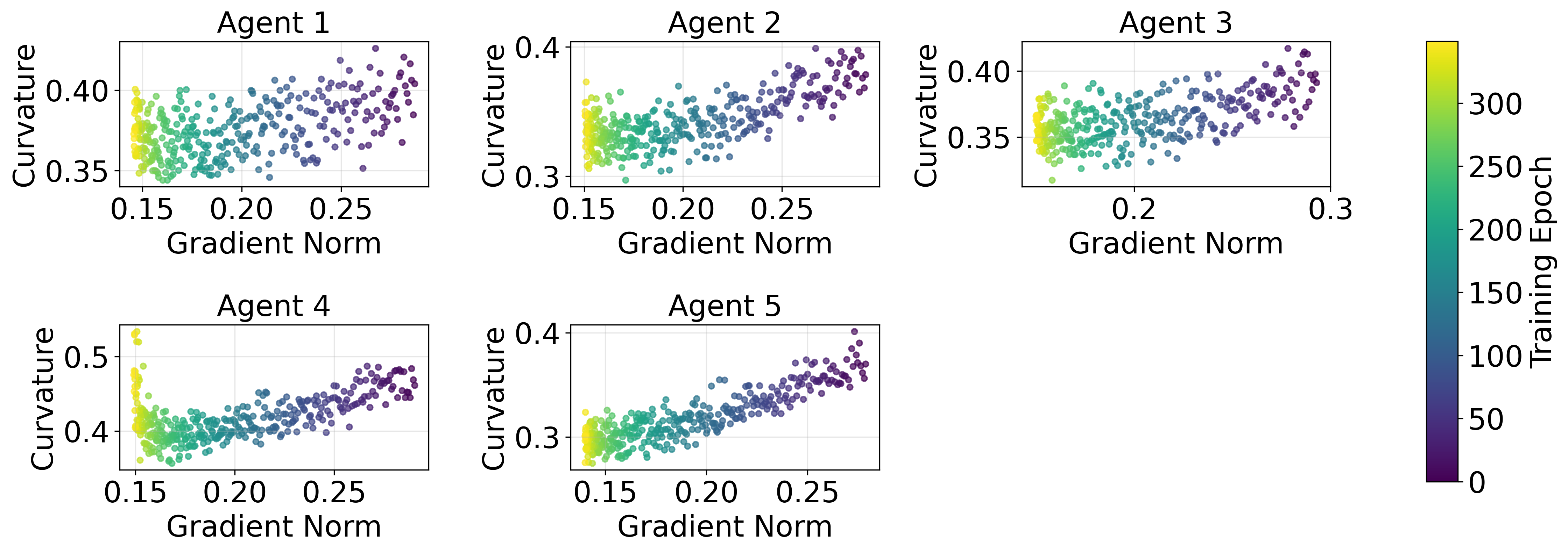}
    \caption{Smoothness dependency on the gradient norm for decentralized neural networks with a9a dataset for individual agents with a fully-connected topology and vanilla DGD}
    \label{fig:comparison_nn_individual_non_clip}
\end{figure}

\begin{figure}
    \centering
    \includegraphics[width=1\linewidth]{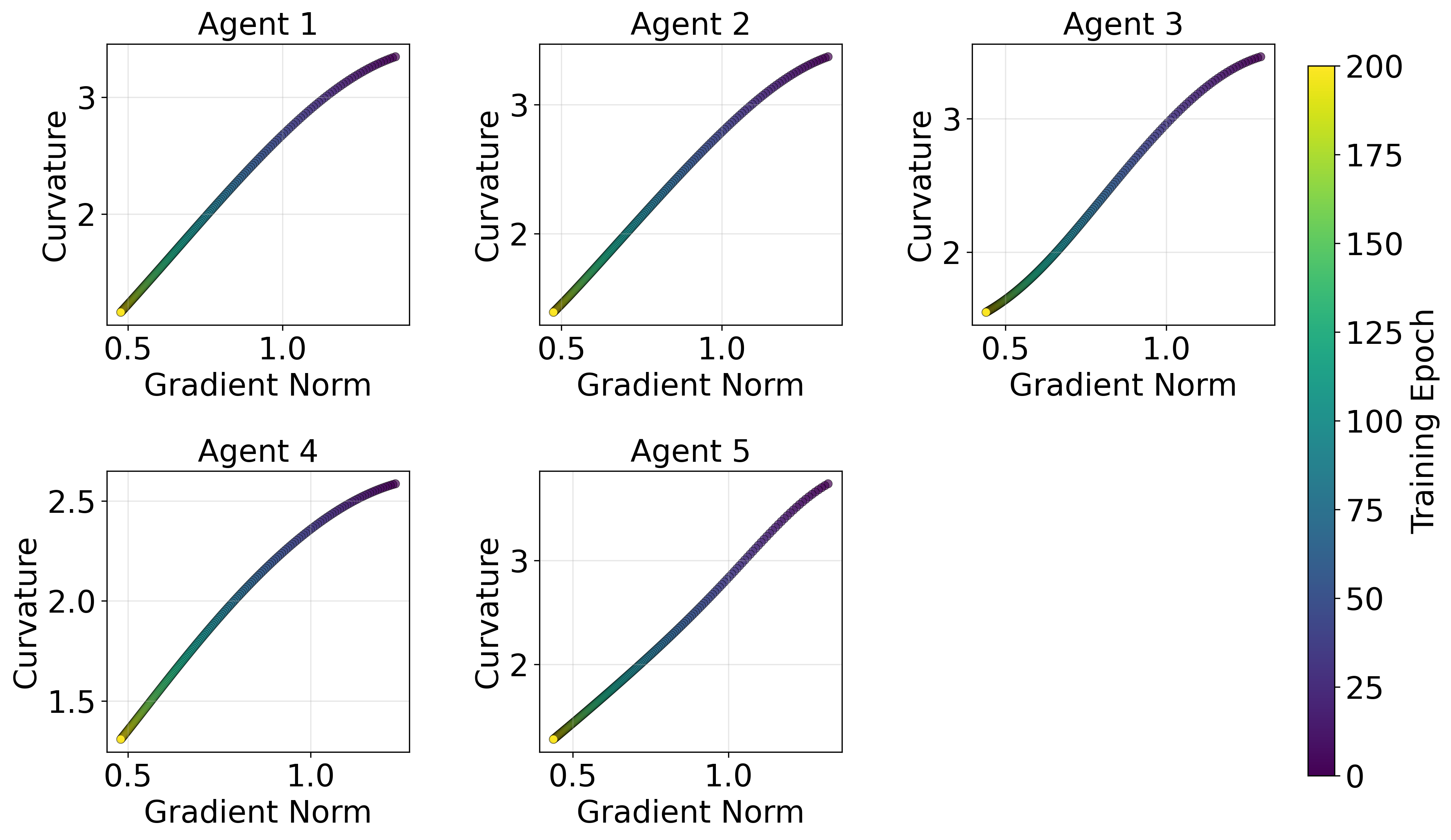}
    \caption{Smoothness dependency on the gradient norm for decentralized logistic regression with breast cancer dataset for individual agent with a fully-connected topology}
    \label{fig:comparison_lr_breast_cancer_individual}
\end{figure}

\begin{figure}
    \centering
    \includegraphics[width=1\linewidth]{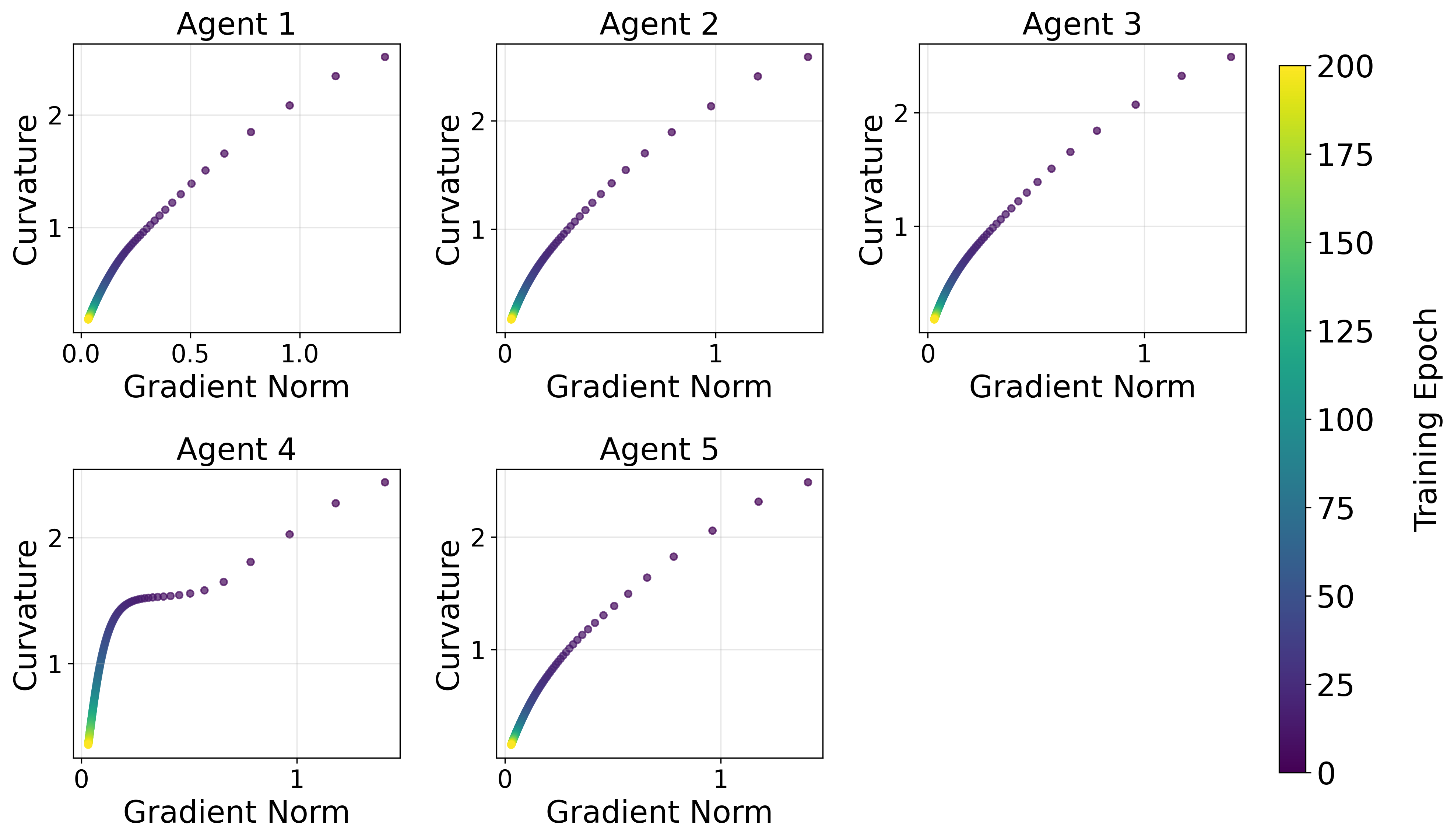}
    \caption{Smoothness dependency on the gradient norm for decentralized logistic regression with mushroom~\cite{chang2011libsvm} dataset for individual agent with a fully-connected topology}
    \label{fig:comparison_lr_mushroom_individual}
\end{figure}

\begin{figure}
    \centering
    \includegraphics[width=1\linewidth]{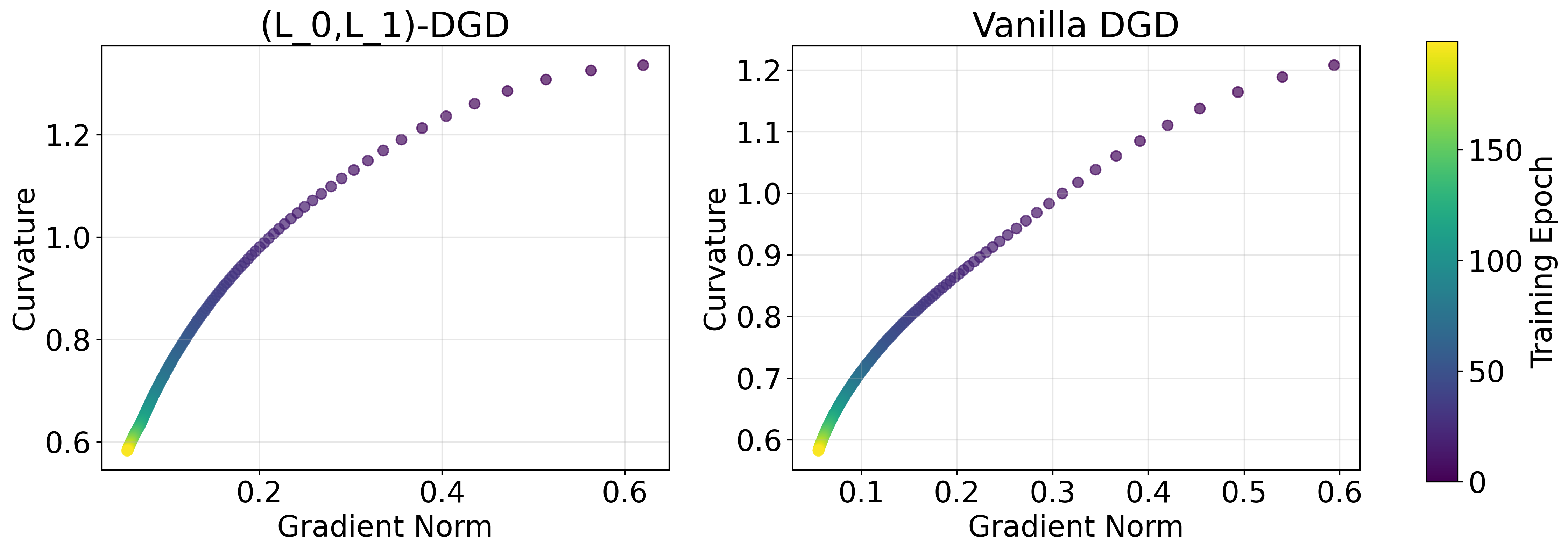}
    \caption{Smoothness dependency on the gradient norm for decentralized logistic regression with a9a dataset for averaged model with a fully-connected topology and 10 agents}
    \label{fig:average_comparison_lr_a9a_10}
\end{figure}

\begin{figure}
    \centering
    \includegraphics[width=1\linewidth]{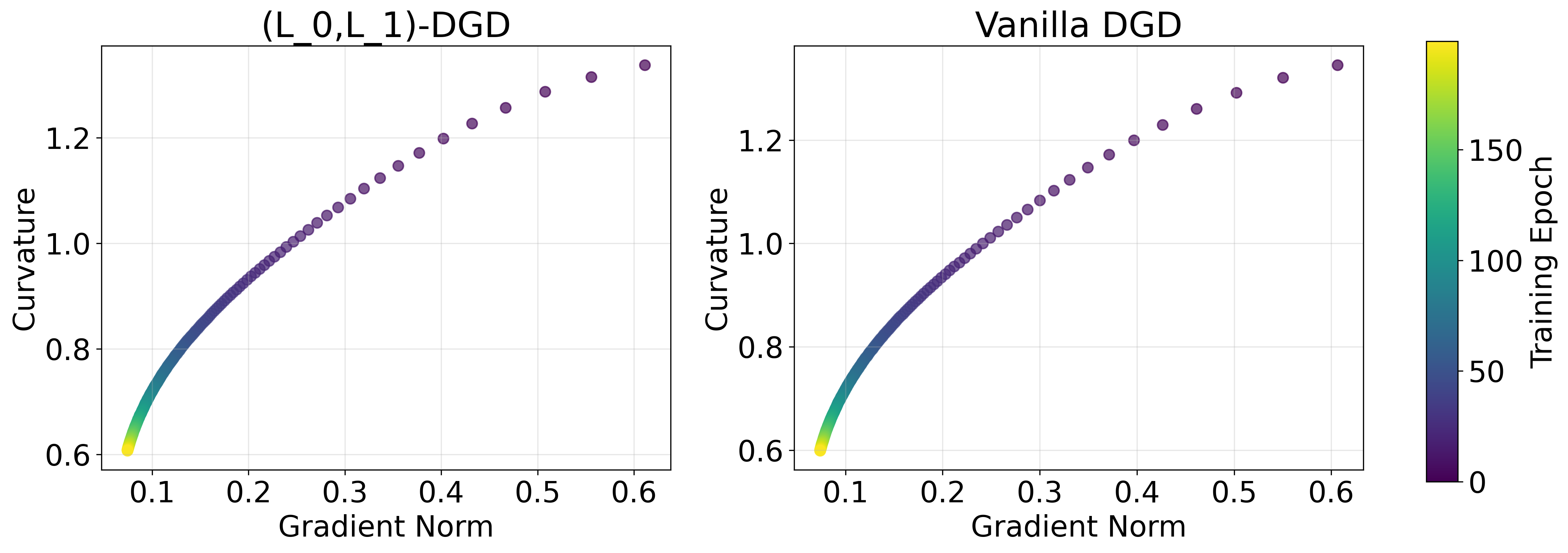}
    \caption{Smoothness dependency on the gradient norm for decentralized logistic regression with a9a dataset for averaged model with a fully-connected topology and 20 agents}
    \label{fig:average_comparison_lr_a9a_20}
\end{figure}

\begin{figure}
    \centering
    \includegraphics[width=1\linewidth]{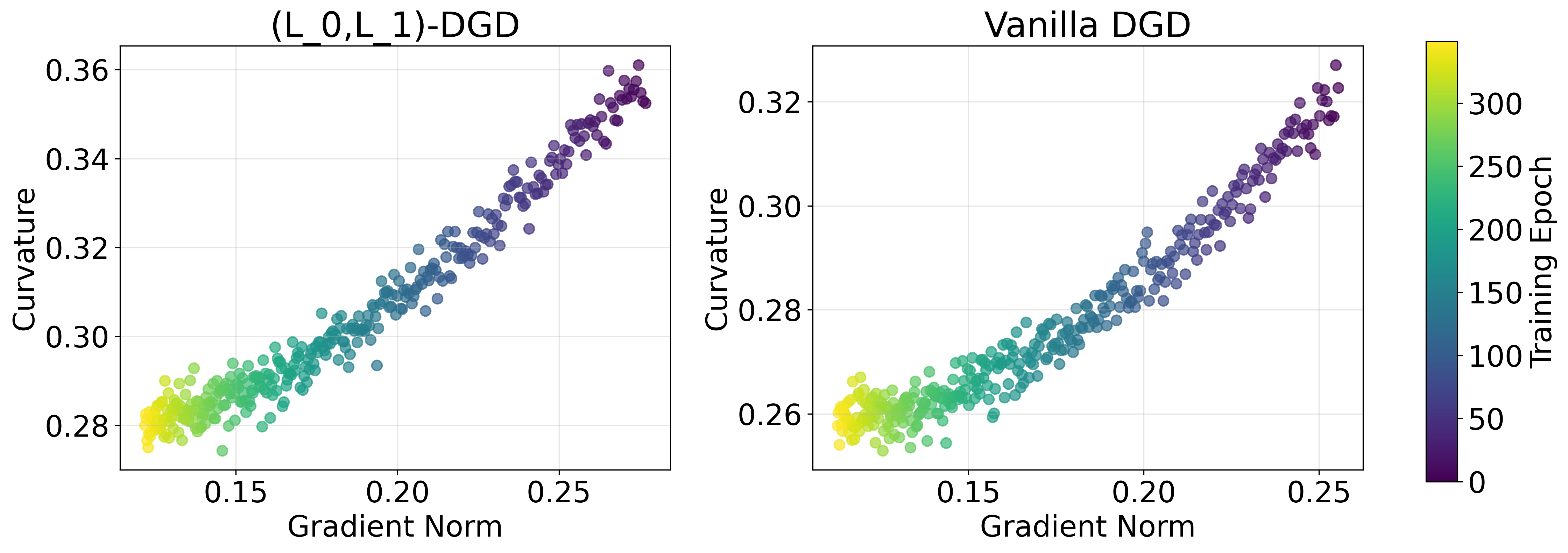}
    \caption{Smoothness dependency on the gradient norm for decentralized neural networks with a9a dataset for averaged model with a fully-connected topology and 10 agents}
    \label{fig:average_comparison_nn_a9a_10}
\end{figure}

\begin{figure}
    \centering
    \includegraphics[width=1\linewidth]{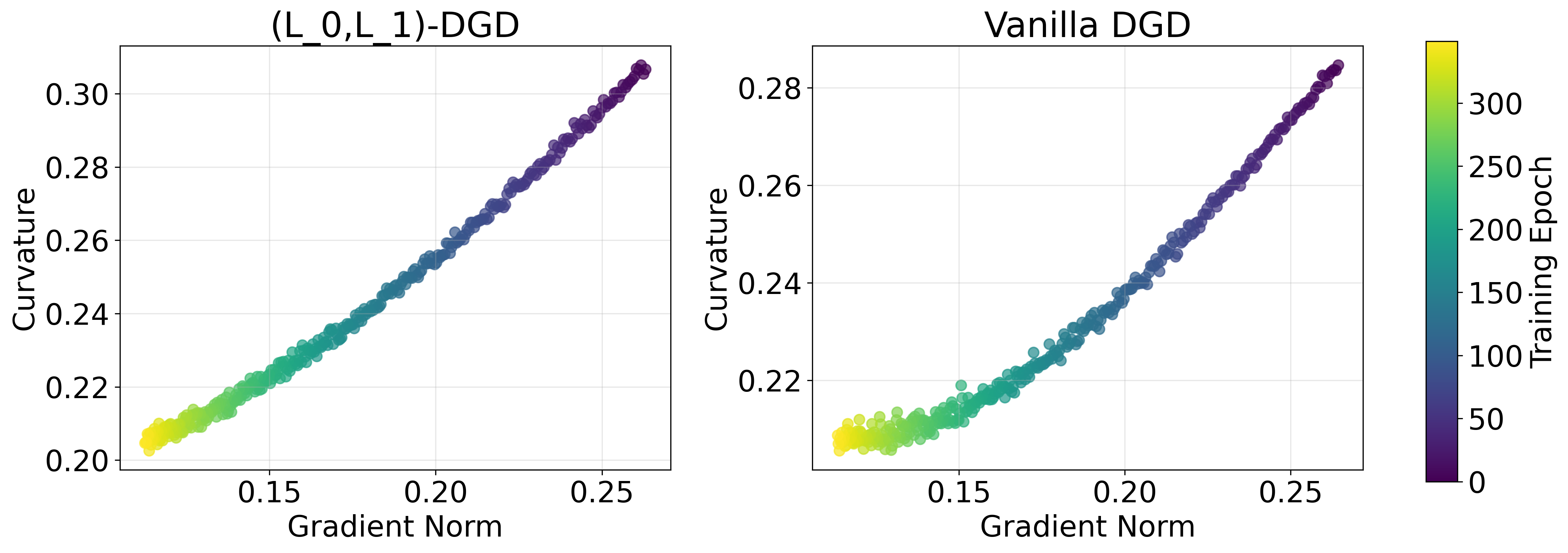}
    \caption{Smoothness dependency on the gradient norm for decentralized neural networks with a9a dataset for averaged model with a fully-connected topology and 20 agents}
    \label{fig:average_comparison_nn_a9a_20}
\end{figure}

\begin{figure}
    \centering
    \includegraphics[width=1\linewidth]{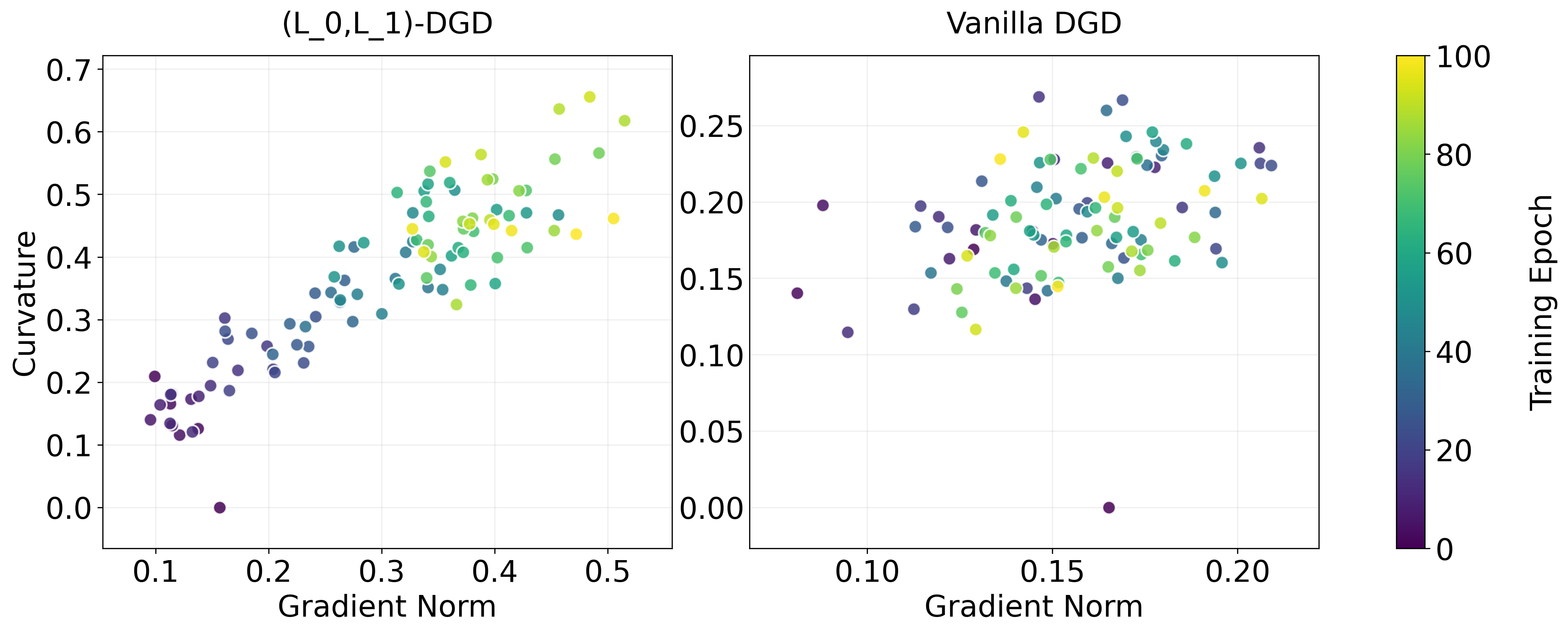}
    \caption{Smoothness dependency on the gradient norm for decentralized CNN with MNIST dataset for averaged model with a fully-connected topology and 10 agents}
    \label{fig:average_comparison_cnn_mnist_10}
\end{figure}

\begin{figure}
    \centering
    \includegraphics[width=1\linewidth]{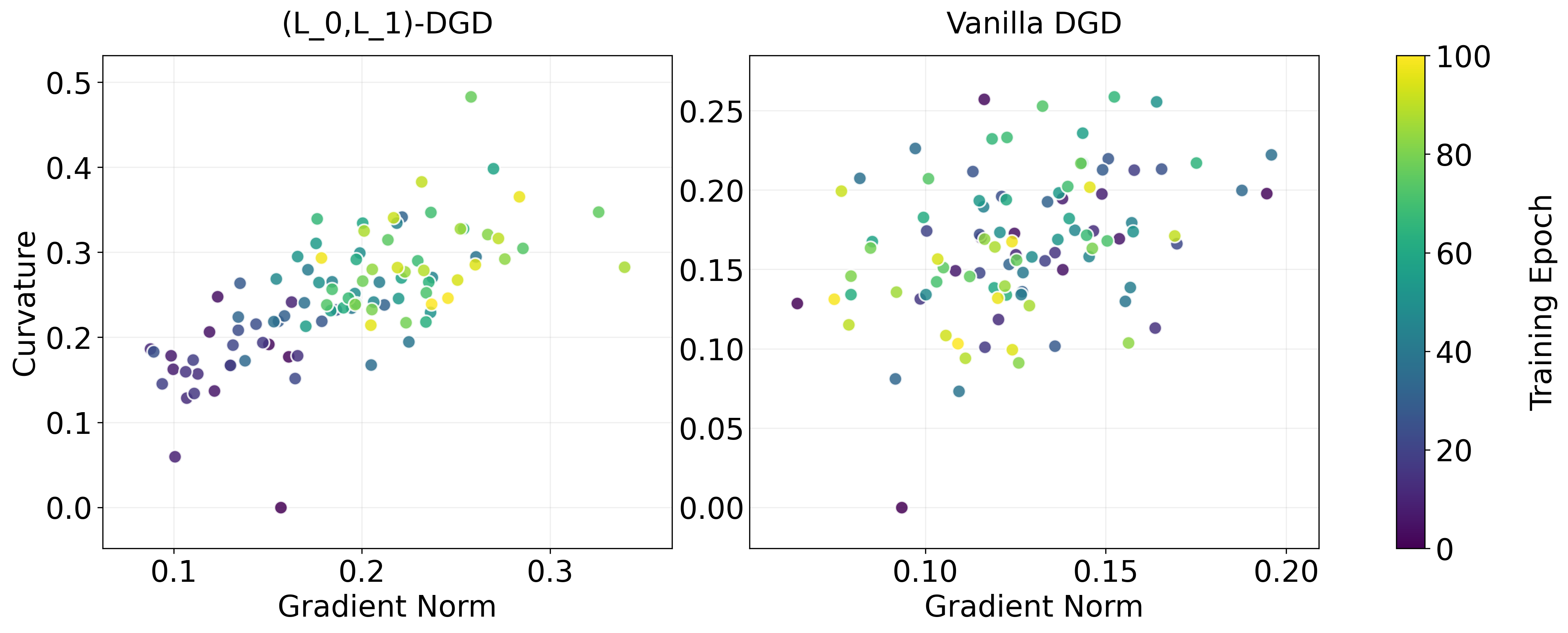}
    \caption{Smoothness dependency on the gradient norm for decentralized CNN with MNIST dataset for averaged model with a fully-connected topology and 20 agents}
    \label{fig:average_comparison_cnn_mnist_20}
\end{figure}

\subsection{Auxiliary Theoretical Analysis}\label{auxiliary_results}
\begin{lemma}\label{lemma_5}
    Let $x\in\mathbb{R}^d$ be the current point in a gradient descent type of update. If for any $y\in\mathbb{R}^d$, the following relationship holds:
    \begin{equation}\label{eq_a1}
        f^i(y)\leq f^i(x)+\langle\nabla f^i(x),y-x\rangle+\frac{z}{L_1^2}\psi(L_1\|y-x\|),
    \end{equation}
    where $z:=L_0+L_1\|\nabla f^i(x)\|>0$, $\psi(\gamma)=e^\gamma-\gamma-1$, then the optimal point $y^*=\mathcal{T}(x)$ satisfies the form $\mathcal{T}(x)=x-\eta\frac{\nabla f(x)}{\|\nabla f(x)\|}$.
\end{lemma}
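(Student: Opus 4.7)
The plan is to show that minimizing the right-hand side of Eq.~\ref{eq_a1} over $y\in\mathbb{R}^d$ is equivalent to solving a one-dimensional problem in the scalar step length, with the direction determined completely by Cauchy--Schwarz. First I would introduce the polar-style decomposition $y-x=\eta u$, where $\eta\geq 0$ and $u\in\mathbb{R}^d$ with $\|u\|=1$. Under this change of variables, the upper bound becomes
\begin{equation*}
    G(\eta,u) := f^i(x)+\eta\langle\nabla f^i(x),u\rangle+\frac{z}{L_1^2}\psi(L_1\eta),
\end{equation*}
so the second term is the only one depending on $u$, while the first and third depend only on $\eta$. Since $\mathcal{T}(x)$ is defined as the minimizer of the right-hand side of Eq.~\ref{eq_a1}, it suffices to minimize $G(\eta,u)$ jointly over $\eta\geq 0$ and unit $u$.

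Next I would minimize over $u$ for fixed $\eta$. If $\nabla f^i(x)=0$, then $G$ is independent of $u$ and the claim holds trivially with $\eta=0$. Otherwise, by Cauchy--Schwarz, $\langle\nabla f^i(x),u\rangle\geq -\|\nabla f^i(x)\|$ with equality if and only if $u=-\nabla f^i(x)/\|\nabla f^i(x)\|$. Since $\eta\geq 0$, the inner minimum in $u$ is attained at exactly this unit vector, independently of $\eta$. Substituting back yields the reduced one-dimensional problem
\begin{equation*}
    \min_{\eta\geq 0}\;\Bigl\{f^i(x)-\eta\|\nabla f^i(x)\|+\tfrac{z}{L_1^2}\psi(L_1\eta)\Bigr\},
\end{equation*}
whose minimizer $\eta^*\geq 0$ exists because $\psi$ is strictly convex and superlinear. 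Setting $\eta=\eta^*$ and $u=-\nabla f^i(x)/\|\nabla f^i(x)\|$ then gives exactly $\mathcal{T}(x)=x-\eta^*\,\nabla f^i(x)/\|\nabla f^i(x)\|$, proving the stated form.

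I do not expect any serious obstacle here: the whole argument is a separation of variables in which the direction is pinned down by Cauchy--Schwarz and the scale by a standard one-dimensional convex minimization. The only points requiring mild care are (i) the degenerate case $\nabla f^i(x)=0$, which is handled by $\eta=0$, and (ii) verifying that the $u$-minimization is valid on the full unit sphere, which follows because $\eta\geq 0$ ensures the coefficient of $\langle\nabla f^i(x),u\rangle$ is non-negative and the minimizing direction is aligned with $-\nabla f^i(x)$. The explicit value of $\eta^*$ (which matches the step size derived in the proof of Lemma~\ref{lemma_2} via the Fenchel conjugate of $\psi$) is not required for the statement and can be left implicit.
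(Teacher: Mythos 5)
Your proof is correct, but it takes a genuinely different route from the paper's. The paper differentiates the right-hand side of Eq.~\ref{eq_a1} with respect to $y$, sets the gradient to zero, and reads off collinearity of $y-x$ with $-\nabla f^i(x)$ from the resulting vector equation, followed by a case analysis on the sign of the scalar multiplier to extract an explicit $\eta$. You instead separate variables by writing $y-x=\eta u$ with $\|u\|=1$, pin down the direction by Cauchy--Schwarz, and reduce to a one-dimensional convex minimization in $\eta$. Your route is arguably cleaner: it does not require differentiating $\|y-x\|$ at $y=x$ (where the norm is not differentiable), it handles the boundary case $\eta^*=0$ and the degenerate case $\nabla f^i(x)=0$ without special pleading, and it sidesteps the paper's explicit stationarity computation, which as written drops the constants $z/L_1^2$ and $L_1$ and leads to a somewhat awkward case split on the sign of $\lambda$. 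What you give up is the explicit value of $\eta^*$, which the paper's calculation (and the Fenchel-conjugate computation in the proof of Lemma~\ref{lemma_2}) supplies; as you note, that value is not needed for the statement itself. One small point worth making explicit: strict convexity and coercivity of the reduced objective in $\eta$ give uniqueness of $\eta^*$, and for $\eta^*>0$ the equality case of Cauchy--Schwarz forces the direction, so in fact every minimizer has the stated form, not merely one.
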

\begin{proof}
    To optimize $y$ on the right hand side of Eq.~\ref{eq_a1}, we calculate its gradient w.r.t $y$ and equal it to 0 such that the following can be obtained:
    \begin{equation}
        \nabla_y(e^{\|y-x\|}-\|y-x\|-1)+\nabla_y\langle\nabla f^i(x),y-x\rangle=0,
    \end{equation}
    which leads to the following
    \begin{equation}
        (e^{\|y-x\|}-1)\frac{y-x}{\|y-x\|}=-\nabla f^i(x)
    \end{equation}
    Let $b=y-x$ such that the above equality can be rewritten as 
    \begin{equation}\label{eq_a2}
        (e^b-1)\frac{b}{\|b\|}=-\nabla f^i(x).
    \end{equation}
    We consider two scenarios: $\nabla f^i(x)=0$ and $\nabla f^i(x)\neq 0$. When $\nabla f^i(x)=0$, this is trivial to yield that $y=x$. 

    Now we consider $\nabla f^i(x)\neq 0$. We can know that in Eq.~\ref{eq_a2}, the left side represents a vector in the direction of $b=y-x$ and the right side is a vector $-\nabla f^i(x)$. Therefore, $y-x$ must be the in the same direction of $-\nabla f^i(x)$ (or the opposite direction if the scalar term is negative). Let $y-x=\lambda(-\nabla f^i(x))$ for some scalar $\lambda$ such that $\|-\nabla f^i(x)\|=|\lambda|\|\nabla f^i(x)\|$. Substituting this into Eq.~\ref{eq_a2}, we have
    \begin{equation}
        (e^{|\lambda|\|\nabla f^i(x)\|}-1)\frac{|\lambda|\|\nabla f^i(x)\|}{|\lambda|\|\nabla f^i(x)\|}=-\nabla f^i(x).
    \end{equation}
    If $\lambda>0$: we have
    \begin{equation}
        (e^{|\lambda|\|\nabla f^i(x)\|}-1)\frac{\|\nabla f^i(x)\|}{\|\nabla f^i(x)\|}=\nabla f^i(x).
    \end{equation}
    If $\nabla f^i(x)\neq 0$, we can divide both sides by $\nabla f^i(x)$ (considering the direction) to get $\lambda$ with some mathematical manipulation:
    \begin{equation}
        \lambda = \frac{\textnormal{ln}(1+\|\nabla f^i(x)\|)}{\|\nabla f^i(x)\|},
    \end{equation}
    which yields
    \begin{equation}
        y=x-\frac{\textnormal{ln}(1+\|\nabla f^i(x)\|)}{\|\nabla f^i(x)\|}\nabla f^i(x).
    \end{equation}
    If $\lambda<0$, let $\lambda=-\beta$ where $\beta>0$. Then we can obtain:
    \begin{equation}
        e^{\beta\|\nabla f^i(x)\|}=1-\|\nabla f^i(x)\|.
    \end{equation}
    For a real solution for $\beta$ to exist, we need $1-\|\nabla f^i(x)\|>0$, so $\|\nabla f^i(x)\|<1$. In this case, we have $\beta=\frac{\textnormal{ln}(1-\|\nabla f^i(x)\|)}{\|\nabla f^i(x)\|}$. This implies that
    \begin{equation}
        y = x+\frac{\textnormal{ln}(1-\|\nabla f^i(x)\|)}{\|\nabla f^i(x)\|}\nabla f^i(x).
    \end{equation}
    Combining these three cases, and letting $\eta$ equal to $0, \textnormal{ln}(1+\|\nabla f^i(x)\|), -\textnormal{ln}(1-\|\nabla f^i(x)\|)$ under different scenarios completes the proof.
\end{proof}
\begin{lemma}\label{lemma_6}
     Let Assumption~\ref{definition_1} hold. Suppose that $f^i:\mathbb{R}^d\to\mathbb{R}$ is twice continuously differentiable and convex. For any $k\geq 0$, let $x^i_k\in\mathbb{R}^d$ generated by Algorithm~\ref{alg:dgd}. Suppose also that a minimizer of $F$, $x^*$ exists. Then we have the following relationship:
        \begin{equation}
            \|\bar{x}_{k+1}-x^*\|\leq \|\bar{x}_k-x^*\|,
        \end{equation}
    where $\bar{x}_{k}=\frac{1}{N}\sum_{i=1}^Nx^i_k$.
\end{lemma}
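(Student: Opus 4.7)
The plan is to expand the squared norm and reduce the lemma to a one-sided inner-product inequality. Since $\Pi$ is doubly stochastic, the mixing step preserves the average, so the averaged update reads $\bar{x}_{k+1} = \bar{x}_k - \alpha_k \bar g_k$ with $\bar g_k := \frac{1}{N}\sum_{i=1}^N \nabla f^i(x^i_k)$. I would therefore write
$$\|\bar{x}_{k+1}-x^*\|^2 - \|\bar{x}_k-x^*\|^2 \;=\; -\,2\alpha_k\langle \bar g_k,\,\bar{x}_k-x^*\rangle + \alpha_k^2\|\bar g_k\|^2,$$
so the lemma reduces to showing $\alpha_k\|\bar g_k\|^2 \le 2\langle \bar g_k,\bar{x}_k-x^*\rangle$.

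For the lower bound on the cross term, I would split $\bar{x}_k - x^* = (\bar{x}_k - x^i_k) + (x^i_k - x^*)$ agent-wise. Convexity of each $f^i$ gives $\langle \nabla f^i(x^i_k), x^i_k-x^*\rangle \ge f^i(x^i_k)-f^i(x^*)$, while Lemma~\ref{lemma_1} applied with $x=x^i_k$, $y=\bar{x}_k$ yields
$$\langle \nabla f^i(x^i_k),\,\bar{x}_k-x^i_k\rangle \;\ge\; f^i(\bar{x}_k) - f^i(x^i_k) - \tfrac{L_0+L_1\|\nabla f^i(x^i_k)\|}{L_1^2}\,\psi(L_1\|\bar{x}_k-x^i_k\|).$$
Averaging over $i$ produces a bound of the form $\langle \bar g_k,\bar{x}_k-x^*\rangle \ge F(\bar{x}_k)-F^* - E_k$, where $E_k$ collects the exponential-remainder corrections.

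For the quadratic piece, I would apply Jensen's inequality to obtain $\|\bar g_k\|^2 \le \frac{1}{N}\sum_i \|\nabla f^i(x^i_k)\|^2$, then relate each $\|\nabla f^i(x^i_k)\|$ to $\|\nabla f^i(\bar{x}_k)\|$ through $(L_0,L_1)$-smoothness. The per-agent descent of Lemma~\ref{lemma_2}, namely $f^i(\bar{x}_k)-f^i(\bar{x}_{k+1}) \ge \|\nabla f^i(\bar{x}_k)\|^2/(4L_0+6L_1\|\nabla f^i(\bar{x}_k)\|)$, combined with $F(\bar{x}_k)-F^* \ge F(\bar{x}_k)-F(\bar{x}_{k+1})$, supplies the needed control. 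Plugging in the clipping choices $\alpha_k \le 1/(2L_0)$ and $\alpha_k \le 1/(3L_1\max_i\{\|\nabla f^i(x^i_k)\|,\|\nabla f^i(\bar{x}_k)\|\})$ should pin down $\alpha_k\|\bar g_k\|^2 \le 2\big(F(\bar{x}_k)-F^*-E_k\big) \le 2\langle \bar g_k,\bar{x}_k-x^*\rangle$.

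The main obstacle will be controlling the consensus-error correction $E_k$, which features $\psi(L_1\|\bar{x}_k-x^i_k\|)$ and is super-quadratic in the deviation. I would resolve this by using the constraint $\|\bar{x}_k - x^i_k\| \le 1/L_1$ that accompanies Assumption~\ref{definition_1}, so that $\psi(L_1\|\bar{x}_k-x^i_k\|) \le \tfrac{e-2}{2}\,L_1^2\|\bar{x}_k-x^i_k\|^2$, and by invoking the contraction property of $\Pi$ from Assumption~\ref{assumption_1} to absorb the resulting quadratic consensus term into the descent $F(\bar{x}_k)-F^*$ via the same clipping-size thresholds used in Lemma~\ref{lemma_2}. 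That absorption is the delicate step; everything else is a bookkeeping exercise built on top of Lemma~\ref{lemma_1} and Lemma~\ref{lemma_2}.
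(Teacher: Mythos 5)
Your reduction is correct: since the mixing matrix preserves the average, $\bar{x}_{k+1}=\bar{x}_k-\alpha_k\bar g_k$, and the lemma is equivalent to $\alpha_k\|\bar g_k\|^2\leq 2\langle\bar g_k,\bar{x}_k-x^*\rangle$. This is also where the paper starts. The routes then diverge: the paper writes $\bar g_k=\nabla F(\bar{x}_k)+\frac{1}{N}\sum_i(\nabla f^i(x^i_k)-\nabla f^i(\bar{x}_k))$, applies convexity only at $\bar{x}_k$, and controls the perturbation through a separate consensus recursion $\|\mathbf{q}_{k+1}\|\leq(\sqrt{\rho}+\alpha\frac{L_0+L_1G_k}{L_1})\|\mathbf{q}_k\|$ together with a consensus-stability condition on the step size; you instead split $\bar{x}_k-x^*$ agent-wise through $x^i_k$ and invoke Lemma~\ref{lemma_1} at each $x^i_k$, arriving at $\langle\bar g_k,\bar{x}_k-x^*\rangle\geq F(\bar{x}_k)-F^*-E_k$. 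Your decomposition is the one the paper actually uses later, in the proof of Theorem~\ref{theorem_3}, so it is a legitimate alternative skeleton.

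The genuine gap is the final absorption step, which you state as something the clipping thresholds ``should pin down'' but never carry out, and it is precisely the hard part. You need $\alpha_k\|\bar g_k\|^2+2E_k\leq 2(F(\bar{x}_k)-F^*)$, where $E_k$ is of order $\frac{1}{N}\sum_i(L_0+L_1\|\nabla f^i(x^i_k)\|)\|x^i_k-\bar{x}_k\|^2$. Nothing available at this point of the paper bounds $\|x^i_k-\bar{x}_k\|^2$ pointwise in $k$ by a multiple of $F(\bar{x}_k)-F^*$: the deterministic section only has the geometric contraction of the consensus vector (which bounds $\|x^i_k-\bar{x}_k\|$ in terms of $\|x^i_0-\bar{x}_0\|$ and the step size, not in terms of the current suboptimality), and the consensus estimate of Lemma~\ref{lemma_8} is a bound on the \emph{sum} over $k$, not a per-iteration one. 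Without such a bound, $F(\bar{x}_k)-F^*-E_k$ can be negative and your lower bound on the inner product is vacuous, so the monotonicity claim does not follow. Invoking ``the contraction property of $\Pi$ to absorb the quadratic consensus term into $F(\bar{x}_k)-F^*$'' is exactly the missing lemma, not a bookkeeping exercise. Two smaller points: the correct quadratic majorization of $\psi(\gamma)=e^\gamma-\gamma-1$ on $[0,1]$ is $\psi(\gamma)\leq(e-2)\gamma^2$ (your constant $\frac{e-2}{2}$ fails at $\gamma=1$), and the companion bound $\alpha_k\|\bar g_k\|^2\leq 2(F(\bar{x}_k)-F^*)$ should go through Corollary~\ref{corollary_3} (the $(L_0,L_1)$ analogue of $\|\nabla f\|^2\leq 2L(f-f^*)$) rather than through the descent of Lemma~\ref{lemma_2}, which compares $f^i(\bar{x}_k)$ to $f^i(\bar{x}_{k+1})$ and does not by itself relate $\|\nabla f^i(x^i_k)\|^2$ to $F(\bar{x}_k)-F^*$.
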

\begin{proof}
To ease the analysis, we assume that the step size is constant for now and will compare to the one we have defined in Algorithm~\ref{alg:dgd}.
Recall the update law from Algorithm~\ref{alg:dgd} in the following:
\begin{equation}
    x^i_{k+1}=\sum_{j\in Nb(i)}\pi_{ij}x^j_k-\eta\nabla f^i(x^i_k)
\end{equation}
Based on the definition of ensemble average: $\bar{x}_{k}=\frac{1}{N}\sum_{i=1}^Nx^i_k$, we can obtain the next inequality:
\begin{equation}
    \bar{x}_{k+1}=\bar{x}_k-\alpha\frac{1}{N}\sum_{i=1}^N\nabla f^i(x^i_k)
\end{equation}
We define the consensus error as follows:
\begin{equation}
    \mathbf{q}^i_k=x^i_k-\bar{x}_k, \; \textnormal{with} \;\mathbf{q}_k=[\mathbf{q}^1_k;...;\mathbf{q}^N_k]
\end{equation}
Denote by $F(\mathbf{x}_k)=[f^1(x^1_k);...;f^N(x^N_k)]$. We first bound the consensus error. Based on the definition above, we have
\begin{equation}
    \mathbf{q}_{k+1}=\Pi\mathbf{q}_k-\alpha\bigg(F(\mathbf{x}_k)-\mathbf{1}\otimes\frac{1}{N}\sum_{i=1}^N\nabla f^i(\bar{x}_k)\bigg)
\end{equation}
Using Assumption~\ref{assumption_1}, we have
\begin{equation}\label{eq_62}
    \|\mathbf{q}_{k+1}\|\leq \sqrt{\rho}\|\mathbf{q}_k\|+\alpha\|F(\mathbf{x}_k)-\mathbf{1}\otimes\frac{1}{N}\sum_{i=1}^N\nabla f^i(\bar{x}_k)\|.
\end{equation}
According to Assumption~\ref{definition_1}, it is obtained that
\begin{equation}
    \|\nabla f^i(x^i_k)-\nabla f^i(\bar{x}_k)\|\leq (L_0+L_1\|\nabla f^i(\bar{x}_k)\|)\frac{1}{L_1}.
\end{equation}
Summing the above inequality over $i\in\mathcal{V}$
\begin{equation}\label{eq_64}
    \bigg\|\frac{1}{N}\sum_{i=1}^N\nabla f^i(x^i_k)-F(\bar{x}_k)\bigg\|\leq \frac{L_0+L_1G_k}{L_1}\cdot\frac{1}{N}\sum_{i=1}^N\|\mathbf{q}^i_k\|,
\end{equation}
where $G_k:=\textnormal{max}_i\|\nabla f^i(\bar{x}_k)\|$.
Substituting Eq.~\ref{eq_64} into Eq.~\ref{eq_62} yields:
\begin{equation}\label{eq_65}
    \|\mathbf{q}_{k+1}\|\leq(\sqrt{\rho}+\alpha\frac{L_0+L_1G_k}{L_1})\|\mathbf{q}_{k}\|.
\end{equation}
We next analyze the optimality gap. Recalling 
\begin{equation}
    \bar{x}_{k+1}=\bar{x}_k-\alpha\frac{1}{N}\sum_{i=1}^N\nabla f^i(x^i_k),
\end{equation}
we have 
\begin{equation}
    \|\bar{x}_{k+1}-x^*\|^2=\|\bar{x}_{k}-x^*\|^2-2\alpha\bigg\langle\bar{x}_k-x^*,\frac{1}{N}\sum_{i=1}^N\nabla f^i(x^i_k)\bigg\rangle + \alpha^2\|\frac{1}{N}\sum_{i=1}^N\nabla f^i(x^i_k)\|^2
\end{equation}
We now split the gradient on the second term of the above equality.
\begin{equation}
    \frac{1}{N}\sum_{i=1}^N\nabla f^i(x^i_k)=\nabla F(\bar{x}_k)+\frac{1}{N}\sum_{i=1}^N(f^i(x^i_k)-f^i(\bar{x}_k))
\end{equation}
With convexity, we have
\begin{equation}
    \langle\bar{x}_k-x^*,\nabla F(\bar{x}_k)\rangle\geq F(\bar{x}_k)-F^*
\end{equation}
Therefore, we can get
\begin{equation}
    \|\bar{x}_{k+1}-x^*\|^2\leq\|\bar{x}_{k}-x^*\|^2-2\alpha (F(\bar{x}_k)-F^*) + \alpha^2 \|\nabla F(\bar{x}_k)+\frac{1}{N}\sum_{i=1}^N(f^i(x^i_k)-f^i(\bar{x}_k))\|^2
\end{equation}
According to Eq.~\ref{eq_64}, we know that $\frac{1}{N}\sum_{i=1}^N(f^i(x^i_k)-f^i(\bar{x}_k))$ is bounded by the consensus error. Hence, for the term $-2\alpha (F(\bar{x}_k)-F^*)$ to dominate, we need
\begin{equation}
    \alpha\leq \frac{F(\bar{x}_k-F^*)}{\|\nabla F(\bar{x}_k)\|^2}.
\end{equation}
Using the condition $\|\bar{x}_k-x^*\|\leq \frac{1}{L_1}$ and convexity, we have the following:
\begin{equation}
    F(\bar{x}_k)-F^*\leq \|\nabla F(\bar{x}_k)\|\|\bar{x}_k-x^*\|\leq \frac{\|\nabla F(\bar{x}_k)\|}{L_1},
\end{equation}
which produces $\alpha\leq\frac{1}{L_1\|\nabla F(\bar{x}_k)\|}$. 
To maintain the consensus stability, the following condition should satisfy:
\begin{equation}
    \sqrt{\rho}+\alpha\frac{L_0+L_1G_k}{L_1}<1
\end{equation}
such that $\alpha\leq \frac{(1-\sqrt{\rho})L_1}{L_0+L_1G_k}$. 
Combining with the consensus stability condition in Eq.~\ref{eq_65}, we have
\begin{equation}\label{eq_74}
    \alpha\leq \textnormal{min}\bigg\{\frac{(1-\sqrt{\rho})L_1}{L_0+L_1G_k},\frac{1}{L_1\|\nabla F(\bar{x}_k)\|}\bigg\}
\end{equation}
Based on $(L_0,L_1)$-smoothness, the following is attained
\begin{equation}
    \|\nabla F(x)\|\leq (L_0+L_1\|\nabla F(x)\|)\frac{1}{L_1}
\end{equation}
such that
\begin{equation}
    \|\nabla F(x)\|\leq\frac{L_0}{L_1-L_1\|\nabla F(x)\|}. 
\end{equation}
We solve the above inequality iteratively under $\|x-x^*\|\leq \frac{1}{L_1}$ yielding an implicit bound for $\|\nabla F(x)\|\leq\frac{L_0}{L_1}$. Substituting this into Eq.~\ref{eq_74} yields the step size
\begin{equation}
    \alpha\leq\frac{1-\sqrt{\rho}}{2L_0(2-\sqrt{\rho})}<\frac{1}{2L_0},
\end{equation}
which matches our definition in the Algorithm~\ref{alg:dgd}.
\end{proof}

% \begin{lemma}\label{lemma_7}
%     Let Assumption~\ref{assum_3} hold. Suppose that $\mathbb{E}[g^i] = \nabla f^i(x^i), \forall i\in\mathcal{V}$. Then, we have the following relationship
%     \begin{equation}
%         \mathbb{E}[\|\frac{1}{N}\sum_{i=1}^N g^i\|^2]\leq\frac{1}{N}\sigma^2+\mathbb{E}[\|\frac{1}{N}\sum_{i=1}^N\nabla f^i(x^i)\|^2].
%     \end{equation}
% \end{lemma}
% The proof for the Lemma~\ref{lemma_7} follows similarly Lemma 1 in~\cite{yu2019linear} and we skip it in this context.
\begin{corollary}\label{corollary_3}
    Let $f$ be a convex $(L_0,L_1)$-smooth nonlinear function. Then for any $x,y\in\mathbb{R}^d$, we have the following relationship
    \begin{equation}
        \frac{\|\nabla f(y)-\nabla f(x)\|^2}{2(L_0+L_1\|\nabla f(y)\|)+L_1\|\nabla f(y)-\nabla f(x)\|}\leq f(y)-f(x)-\langle\nabla f(x), y-x\rangle.
    \end{equation}
\end{corollary}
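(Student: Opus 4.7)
\bigskip

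\noindent\textbf{Proof proposal for Corollary~\ref{corollary_3}.} The plan is to adapt the classical ``descent-from-an-auxiliary-point'' argument used to derive co-coercivity in the $L_0$-smooth case, substituting Lemma~\ref{lemma_1} for the standard descent lemma. First I would define the shifted function
\[
g(w) := f(w) - \langle \nabla f(x), w - x\rangle - f(x),
\]
which is convex (inheriting convexity from $f$), satisfies $g(x) = 0$ and $\nabla g(x) = 0$, so that $x$ is a global minimizer of $g$. Since $g$ differs from $f$ by an affine term, it shares the same Hessian and is itself $(L_0, L_1)$-smooth with the same constants, and crucially its smoothness estimate is governed by $\|\nabla f(w)\|$ (not $\|\nabla g(w)\|$). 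Observe that the right-hand side of the target inequality is precisely $g(y) - g(x)$.

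Second, for a parameter $\alpha > 0$ to be chosen, apply Lemma~\ref{lemma_1} at the pair $(y,\, z)$ with $z := y - \alpha \nabla g(y)$. Writing $q := L_0 + L_1 \|\nabla f(y)\|$ and $r := \|\nabla g(y)\| = \|\nabla f(y) - \nabla f(x)\|$, the lemma yields
\[
g(z) \;\leq\; g(y) \;-\; \alpha r^2 \;+\; \frac{q}{L_1^2}\,\psi(L_1 \alpha r),
\]
where $\psi(\gamma) = e^\gamma - \gamma - 1$. Combining with $g(x) \leq g(z)$ (from the minimizer property) and maximizing the resulting lower bound over $\alpha > 0$ gives, via the Fenchel conjugate $\psi_*(t) = (1+t)\ln(1+t) - t$ at $\alpha^* = (L_1 r)^{-1}\ln(1 + L_1 r / q)$,
\[
g(y) - g(x) \;\geq\; \frac{q}{L_1^2}\,\psi_*\!\left(\frac{L_1 r}{q}\right).
\]
This conjugate-duality step is identical in spirit to the manipulation already used in the proof of Lemma~\ref{lemma_2}, so it should be routine.

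Third, to convert the $\psi_*$ bound into the clean algebraic form stated in the corollary, I would establish the elementary inequality
\[
\psi_*(t) \;=\; (1+t)\ln(1+t) - t \;\geq\; \frac{t^2}{2 + t}, \qquad t \geq 0.
\]
Setting $\phi(t) := \psi_*(t) - t^2/(2+t)$, one computes $\phi(0) = \phi'(0) = 0$ and
\[
\phi''(t) \;=\; \frac{1}{1+t} - \frac{8}{(2+t)^3},
\]
which is nonnegative because $(2+t)^3 - 8(1+t) = t(t^2 + 6t + 4) \geq 0$ for $t \geq 0$. Hence $\phi' \geq 0$ and $\phi \geq 0$ on $[0,\infty)$. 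Substituting $t = L_1 r / q$ into this inequality and simplifying yields exactly
\[
\frac{q}{L_1^2}\,\psi_*\!\left(\frac{L_1 r}{q}\right) \;\geq\; \frac{r^2}{2q + L_1 r},
\]
which is the desired bound.

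The main obstacle is the elementary inequality in the third step: while it is just a scalar calculus exercise, getting the constants to match the precise denominator $2(L_0 + L_1\|\nabla f(y)\|) + L_1\|\nabla f(y) - \nabla f(x)\|$ (rather than a looser $2q(1+t)$-type bound one would get from a suboptimal choice of $\alpha$) requires using the specific factorization $(2+t)^3 - 8(1+t) = t(t^2+6t+4)$. A secondary technical issue is the case $L_1 = 0$, handled by taking the limit $L_1 \to 0$ which recovers the classical $L_0$-smooth co-coercivity inequality $\|\nabla f(y)-\nabla f(x)\|^2 / (2L_0)$, consistent with the $L_1 \to 0$ limit of the stated bound.
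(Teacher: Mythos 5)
Your proposal is correct and follows essentially the same route as the paper, which simply delegates this corollary to Corollary 2.8 of~\cite{vankov2024optimizing}: the shifted function $g(w)=f(w)-\langle\nabla f(x),w-x\rangle-f(x)$, the gradient step $z=y-\alpha\nabla g(y)$ combined with Lemma~\ref{lemma_1}, the Fenchel conjugate $\psi_*(t)=(1+t)\ln(1+t)-t$, and the scalar bound $\psi_*(t)\geq t^2/(2+t)$ are exactly the ingredients of that cited argument, and your verification of each step (including the factorization $(2+t)^3-8(1+t)=t(t^2+6t+4)$) checks out. The only point worth recording is that you implicitly rely on Lemma~\ref{lemma_1} holding without the restriction $\|z-y\|\leq 1/L_1$ (the optimal step can produce $\|z-y\|=\tfrac{1}{L_1}\ln(1+L_1r/q)>1/L_1$ when $r$ is large relative to $q$), which is consistent with Lemma~\ref{lemma_1} as stated in the paper but not with the restricted form of Assumption~\ref{definition_1}.
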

\begin{proof}
    The proof follows similarly from the proof for Corollary 2.8 in~\cite{vankov2024optimizing}.
\end{proof}
\begin{lemma}\label{lemma_8}
    Let Assumptions~\ref{assumption_1}, \ref{definition_1} and~\ref{assum_3} hold and $\{\bar{x}_k\}$ be the iterates produced by Algorithm~\ref{alg:dsgd} with step size $\alpha_k=\textnormal{min}\{\hat{\alpha}, \frac{1}{\textnormal{max}_i\{\|\nabla f^i(\bar{x}_k)\|\}},\frac{1-\sqrt{\rho}}{\textnormal{max}_i\{\sqrt{24L_1\|\nabla f^i(\bar{x}_k)\|}\}}\}$, where $\hat{\alpha}=\textnormal{min}\{\frac{1-\sqrt{\rho}}{\textnormal{max}\{20L_0,\sqrt{24L_0}\}},\frac{1}{4C_1}, \frac{1}{\sqrt{K}}\}$. Suppose that $f^i:\mathbb{R}^d\to\mathbb{R}$ is a twice continuously differentiable convex function. The the following relationship holds:
\begin{equation}
\begin{split}
    &\frac{1}{2}\sum_{k=1}^K\frac{1}{N}\sum_{i=1}^N\|x^i_k-\bar{x}_k\|^2\leq \sum_{k=1}^{K}\frac{3\alpha^2_k(\sigma^2+\delta^2)}{(1-\sqrt{\rho})^2}\\&+\sum_{k=1}^K\frac{1}{N}\sum_{i=1}^N\frac{6\alpha^2_k}{(1-\sqrt{\rho})^2}(2L_0+3L_1\|\nabla f^i(\bar{x}_k)\|)(f^i(\bar{x}_k)-f^i_*).
\end{split}
\end{equation}
\end{lemma}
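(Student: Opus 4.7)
\textbf{Proof proposal for Lemma~\ref{lemma_8}.}
My plan is to follow the standard consensus-error template for decentralized SGD, but to carry the gradient-norm-dependent smoothness constants through the bookkeeping and absorb them using the clipping conditions built into $\alpha_k$. Stack the iterates into $\mathbf{X}_k=[x^1_k;\dots;x^N_k]\in\mathbb{R}^{N\times d}$ and the stochastic gradients into $\mathbf{G}_k=[g^1_k;\dots;g^N_k]$. Writing $J=\tfrac{1}{N}\mathbf{1}\mathbf{1}^\top$ (so that $\bar{\mathbf{X}}_k=J\mathbf{X}_k$), doubly-stochasticity gives $J\Pi=J$ and $\Pi J=J$, hence
\begin{equation}
\mathbf{X}_{k+1}-\bar{\mathbf{X}}_{k+1}=(\Pi-J)(\mathbf{X}_k-\bar{\mathbf{X}}_k)-\alpha_k(I-J)\mathbf{G}_k.
\end{equation}
Unrolling from the common initialization (so $\mathbf{X}_1=\bar{\mathbf{X}}_1$) yields
\begin{equation}
\mathbf{X}_{k}-\bar{\mathbf{X}}_{k}=-\sum_{s=1}^{k-1}\alpha_s(\Pi-J)^{k-1-s}(I-J)\mathbf{G}_s,
\end{equation}
and Assumption~\ref{assumption_1} together with Theorem~\ref{kronecker_prod} gives $\|(\Pi-J)^{t}(I-J)\|_{op}\le\rho^{t/2}$.

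Next I would apply the Cauchy--Schwarz/Jensen trick $\big(\sum_s a_s b_s\big)^2\le\big(\sum_s a_s\big)\big(\sum_s a_s b_s^2\big)$ with $a_s=\rho^{(k-1-s)/2}$, which produces the geometric factor $\tfrac{1}{1-\sqrt\rho}$ once, and then squaring yields a second factor, giving
\begin{equation}
\|\mathbf{X}_{k}-\bar{\mathbf{X}}_{k}\|_F^2\le\frac{1}{1-\sqrt\rho}\sum_{s=1}^{k-1}\rho^{(k-1-s)/2}\alpha_s^2\|\mathbf{G}_s\|_F^2.
\end{equation}
Take expectations and split $\|g^i_s\|^2\le 2\|g^i_s-\nabla f^i(x^i_s)\|^2+2\|\nabla f^i(x^i_s)\|^2$, controlling the first piece by $\sigma^2$ (Assumption~\ref{assum_3}a); then split $\|\nabla f^i(x^i_s)\|^2$ into $(L_0,L_1)$-smoothness slack at $\bar{x}_s$ plus $\|\nabla f^i(\bar{x}_s)\|^2$, using gradient diversity (Assumption~\ref{assum_3}b) together with $\|\nabla f^i(\bar x_s)\|^2\le 2\|\nabla f^i(\bar x_s)-\nabla F(\bar x_s)\|^2+2\|\nabla F(\bar x_s)\|^2$ to inject $\delta^2$. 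Crucially, Corollary~\ref{corollary_3} upgrades the convex $(L_0,L_1)$-smooth gradient bound to
\begin{equation}
\|\nabla f^i(\bar x_s)\|^2\le (2L_0+3L_1\|\nabla f^i(\bar x_s)\|)(f^i(\bar x_s)-f^i_*),
\end{equation}
which is exactly the form appearing on the right-hand side of the lemma.

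Summing the resulting bound over $k=1,\dots,K$ and swapping the order of summation collapses the geometric factor into a second $\tfrac{1}{1-\sqrt\rho}$, producing a clean $\tfrac{\alpha_s^2}{(1-\sqrt\rho)^2}$ coefficient on every source term. The $\sigma^2+\delta^2$ noise/heterogeneity contributions then give the first summand, and the gradient-norm contributions give the second. The genuinely new and non-trivial piece relative to classical decentralized SGD is handling the smoothness slack term $\frac{1}{N}\sum_i\|\nabla f^i(x^i_s)-\nabla f^i(\bar x_s)\|^2\le\frac{1}{N}\sum_i(L_0+L_1\|\nabla f^i(\bar x_s)\|)^2\|x^i_s-\bar x_s\|^2$, since this couples back to the very quantity we are trying to bound. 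The step-size conditions $\alpha_k\le\tfrac{1}{\max_i\|\nabla f^i(\bar x_k)\|}$ and $\alpha_k\le\tfrac{1-\sqrt\rho}{\sqrt{24 L_1\max_i\|\nabla f^i(\bar x_k)\|}}$, together with $\hat\alpha\le\tfrac{1-\sqrt\rho}{\max\{20L_0,\sqrt{24L_0}\}}$, are designed precisely so that this coupling term is absorbed on the left-hand side with a factor $\tfrac{1}{2}$, which is the step where I expect the bookkeeping to be most delicate. The remaining pieces on the right-hand side then match the stated bound.
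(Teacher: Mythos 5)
Your proposal is correct and follows essentially the same route as the paper's proof: unroll the projected consensus recursion, use the spectral bound $\rho^{t/2}$ with a Cauchy--Schwarz step to extract two factors of $\tfrac{1}{1-\sqrt{\rho}}$, split $\|\mathbf{G}_s\|_F^2$ into noise ($\sigma^2$), heterogeneity ($\delta^2$), and average-gradient pieces, bound the latter via $(L_0,L_1)$-smoothness plus Corollary~\ref{corollary_3}, and absorb the self-coupling term $(L_0+L_1\|\nabla f^i(\bar{x}_k)\|)^2\|x^i_k-\bar{x}_k\|^2$ on the left using the clipping conditions in $\alpha_k$. You also correctly identify the absorption step as the only genuinely delicate point, which is exactly where the paper invokes the conditions $\alpha_k\le\tfrac{1-\sqrt{\rho}}{\max_i\{\sqrt{24L_1\|\nabla f^i(\bar{x}_k)\|}\}}$ and $\hat{\alpha}\le\tfrac{1-\sqrt{\rho}}{\max\{20L_0,\sqrt{24L_0}\}}$ to guarantee the retained coefficient is at least $\tfrac{1}{2}$.
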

\begin{proof}
    With abuse of notation, we use some upper bold characters to represent vectors after they are expanded. 
Define
\[\mathbf{X}_k=[x^1_k,x^2_k,...,x^N_k]\in\mathbb{R}^{d\times N},\]\[\mathbf{G}_k=[g^1_k,g^2_k,...,g^N_k]\in\mathbb{R}^{d\times N},\]\[\mathbf{H}_k=[\nabla f^1(x^1_k),\nabla f^2(x^2_k),...;\nabla f^N(x^N_k)]\in\mathbb{R}^{d\times N},\]\[\mathbf{Q}=\frac{1}{dN}\mathbf{1}\mathbf{1}_{dN}\in\mathbb{R}^{dN\times dN}\]
Without loss of generality, suppose that the initialization of $\mathbf{X}$ is $\mathbf{X}_0=0$  throughout the rest of analysis. With Algorithm~\ref{alg:dgd}, we have
\begin{equation}
    \mathbf{X}_k = -\alpha\sum_{\tau=0}^{k-1}\mathbf{G}_\tau \mathbf{P}^{k-1-\tau}
\end{equation}
Right multiplying the above equation by $\mathbf{I-Q}$ yields the following relationship
\begin{equation}\label{eq_14}
    \mathbf{X}_k(\mathbf{I-Q}) = -\alpha\sum_{\tau=0}^{k-1}\mathbf{G}_\tau(\mathbf{I-Q})\mathbf{P}^{k-1-\tau},
\end{equation}
which will serve to characterize the optimal error bound. By taking the squared Frobenius norm $\|\cdot\|_F$ and expectation on both sides, we have
\begin{equation}\label{consensus}
    \mathbb{E}[\|\mathbf{X}_k(\mathbf{I-Q})\|_F^2]= \alpha^2\mathbb{E}[\|\sum_{\tau=0}^{k-1}\mathbf{G}_\tau(\mathbf{I-Q})\mathbf{P}^{k-1-\tau}\|_F^2].
\end{equation}
The left side of above equation is equivalent to $\mathbb{E}[\sum_{i=1}^N\|x^i_k-\bar{x}_k\|^2]$. To further analyze the Eq.~\ref{consensus}, we investigate the second term of its right side in the following.
\begin{equation}\label{eq_16}
\begin{split}
    \alpha^2\mathbb{E}[\|\sum_{\tau=0}^{k-1}\mathbf{G}_\tau(\mathbf{I-Q})\mathbf{P}^{k-1-\tau}\|_F^2]&\leq \alpha^2\sum_{\tau=0}^{k-1}\sum_{\tau'=0}^{k-1}\mathbb{E}[\|\mathbf{G}_\tau(\mathbf{I}-\mathbf{Q})\mathbf{P}^{k-1-\tau}\|_F\|\mathbf{G}_\tau'(\mathbf{I}-\mathbf{Q})\mathbf{P}^{k-1-\tau'}\|_F]\\&\leq \sum_{\tau=0}^{k-1}\sum_{\tau'=0}^{k-1}\rho^{k-1-\frac{\tau+\tau'}{2}}\mathbb{E}[\|\mathbf{G}_\tau\|_F\|\mathbf{G}_\tau'\|_F]\\&\leq\sum_{\tau=0}^{k-1}\sum_{\tau'=0}^{k-1}\rho^{k-1-\frac{\tau+\tau'}{2}}\mathbb{E}[\|\mathbf{G}_\tau\|_F^2]\\&\leq \frac{1}{1-\sqrt{\rho}}\sum_{\tau=0}^{k-1}\rho^{\frac{k-1-\tau}{2}}\mathbb{E}[\|\mathbf{G}_\tau\|_F^2]
\end{split}
\end{equation}
We next bound the term $\mathbb{E}[\|\mathbf{G}_\tau\|_F^2]$, which can be rewritten as
\begin{equation}
\begin{split}
    \mathbb{E}[\|\mathbf{G}_\tau\|_F^2]&=\mathbb{E}[\|\mathbf{G}_\tau-\mathbf{H}_\tau+\mathbf{H}_\tau-\mathbf{H}_\tau\mathbf{Q}+\mathbf{H}_\tau\mathbf{Q}\|_F^2]\\&3\mathbb{E}[\|\mathbf{G}_\tau-\mathbf{H}_\tau\|^2_F]+3\mathbb{E}[\|\mathbf{H}_\tau(\mathbf{I-Q})\|^2_F]+3\mathbb{E}[\|\mathbf{H}_\tau \mathbf{Q}\|^2_F]\\&\leq 3N\sigma^2+3N\delta^2+3\mathbb{E}[\|\frac{1}{N}\sum_{i=1}^N\nabla f^i(x^i_\tau)\|^2],
\end{split}
\end{equation}
which is due to Assumption~\ref{assum_3}. With the above inequality in hand, we now know that
\begin{equation}\label{eq_43}
\begin{split}
    \mathbb{E}[\|\sum_{\tau=0}^{k-1}\mathbf{G}_\tau(\mathbf{I-Q})\mathbf{P}^{k-1-\tau}\|_F^2]&\leq\frac{1}{1-\sqrt{\rho}}\sum_{\tau=0}^{k-1}\rho^{\frac{k-1-\tau}{2}}[3N\sigma^2+3N\delta^2+3\mathbb{E}[\|\frac{1}{N}\sum_{i=1}^N\nabla f^i(x^i_\tau)\|^2]]\\&\leq \frac{3N(\sigma^2+\delta^2)}{(1-\sqrt{\rho})^2}+\frac{3N}{(1-\sqrt{\rho})}\sum_{\tau=0}^{k-1}\rho^{\frac{k-1-\tau}{2}}\mathbb{E}[\|\frac{1}{N}\sum_{i=1}^N\nabla f^i(x^i_\tau)\|^2]
\end{split}
\end{equation}
We rewrite the second term at the right hand side of the above inequality as
$\frac{3N}{(1-\sqrt{\rho})}\sum_{\tau=0}^{k-1}\rho^{\frac{k-1-\tau}{2}}\mathbb{E}[\|\frac{1}{N}\sum_{i=1}^N\nabla f^i(x^i_\tau)\|^2]=\frac{3N}{(1-\sqrt{\rho})}\sum_{\tau=0}^{k-1}\rho^{\frac{k-1-\tau}{2}}\mathbb{E}[\|\frac{1}{N}\sum_{i=1}^N(\nabla f^i(x^i_\tau)-\nabla f^i(\bar{x}_\tau)+\nabla f^i(\bar{x}_\tau)-\nabla f^i_*)\|^2]$, where $\nabla f^i_*:=\nabla f^i(x^*)=0$. By using $\|a+b\|^2\leq 2\|a\|^2+2\|b\|^2$ again we can get the following relationship
\begin{equation}\label{eq_44}
\begin{split}
    &\frac{3N}{(1-\sqrt{\rho})}\sum_{\tau=0}^{k-1}\rho^{\frac{k-1-\tau}{2}}\mathbb{E}[\|\frac{1}{N}\sum_{i=1}^N\nabla f^i(x^i_\tau)\|^2]\\&\leq \frac{3N}{(1-\sqrt{\rho})}\sum_{\tau=0}^{k-1}\rho^{\frac{k-1-\tau}{2}}[\frac{2}{N}\sum_{i=1}^N\mathbb{E}[(L_0+L_1\|\nabla f^i(\tau)\|)^2\|x^i_\tau-\bar{x}_\tau\|^2\\&+\frac{2}{N}\sum_{i=1}^N(2L_0+3L_1\|\nabla f^i(\bar{x}_\tau)\|)(f^i(\bar{x}_\tau)-f^i_*)].
\end{split}
\end{equation}
The last inequality is due to Assumption~\ref{definition_1} and Corollary~\ref{corollary_3}. Combining Eq.~\ref{consensus}, Eq.~\ref{eq_43}, Eq.~\ref{eq_44}, and dividing both sides by $\frac{1}{N}$ yields the following:
\begin{equation}
\begin{split}
    \frac{1}{N}\sum_{i=1}^N\|x^i_k-\bar{x}_k\|^2&\leq \frac{3\alpha^2_k(\sigma^2+\delta^2)}{(1-\sqrt{\rho})^2}+\frac{6\alpha^2_k}{1-\sqrt{\rho}}\sum_{\tau=0}^{k-1}\rho^{\frac{k-1-\tau}{2}}[\frac{1}{N}\sum_{i=1}^N\mathbb{E}[(L_0+L_1\|\nabla f^i(\tau)\|)^2\|x^i_\tau-\bar{x}_\tau\|^2\\&+\frac{1}{N}\sum_{i=1}^N(2L_0+3L_1\|\nabla f^i(\bar{x}_\tau)\|)(f^i(\bar{x}_\tau)-f^i_*)]
\end{split}
\end{equation}
Summing over $k\in\{0,1,...,K-1\}$ the above inequality yields the following relationship:
\begin{equation}
\begin{split}
    \sum_{k=1}^K\frac{1}{N}\sum_{i=1}^N\|x^i_k-\bar{x}_k\|^2&\leq \sum_{k=1}^{K}\frac{3\alpha^2_k(\sigma^2+\delta^2)}{(1-\sqrt{\rho})^2}\\&+\sum_{k=1}^K\frac{6\alpha^2_k}{1-\sqrt{\rho}}\sum_{\tau=0}^{k-1}\rho^{\frac{k-1-\tau}{2}}[\frac{1}{N}\sum_{i=1}^N\mathbb{E}[(L_0+L_1\|\nabla f^i(\bar{x}_\tau)\|)^2\|x^i_\tau-\bar{x}_\tau\|^2\\&+\frac{1}{N}\sum_{i=1}^N(2L_0+3L_1\|\nabla f^i(\bar{x}_\tau)\|)(f^i(\bar{x}_\tau)-f^i_*)]\\&\leq \sum_{k=1}^{K}\frac{3\alpha^2_k(\sigma^2+\delta^2)}{(1-\sqrt{\rho})^2}+\\&\sum_{k=1}^K\frac{6\alpha^2_k}{1-\sqrt{\rho}}\frac{1-\rho^{\frac{K-1-k}{2}}}{1-\sqrt{\rho}}[\frac{1}{N}\sum_{i=1}^N(L_0+L_1\|\nabla f^i(\bar{x}_k)\|)^2\|x^i_k-\bar{x}_k\|^2\\&+\frac{1}{N}\sum_{i=1}^N(2L_0+3L_1\|\nabla f^i(\bar{x}_k)\|)(f^i(\bar{x}_k)-f^i_*)]\\&\sum_{k=1}^{K}\frac{3\alpha^2_k(\sigma^2+\delta^2)}{(1-\sqrt{\rho})^2}+\\&\sum_{k=1}^K\frac{6\alpha^2_k}{(1-\sqrt{\rho})^2}[\frac{1}{N}\sum_{i=1}^N(L_0+L_1\|\nabla f^i(\bar{x}_k)\|)^2\|x^i_k-\bar{x}_k\|^2\\&+\frac{1}{N}\sum_{i=1}^N(2L_0+3L_1\|\nabla f^i(\bar{x}_k)\|)(f^i(\bar{x}_k)-f^i_*)]
\end{split}
\end{equation}
The last inequality results in
\begin{equation}
\begin{split}
    &\sum_{k=1}^K\frac{1}{N}\sum_{i=1}^N(1-\frac{6\alpha_k^2}{(1-\sqrt{\rho})^2}(L_0+L_1\|\nabla f^i(\bar{x}_k)\|)\|x^i_k-\bar{x}_k\|^2\leq \sum_{k=1}^{K}\frac{3\alpha^2_k(\sigma^2+\delta^2)}{(1-\sqrt{\rho})^2}\\&+\sum_{k=1}^K\frac{1}{N}\sum_{i=1}^N\frac{6\alpha^2_k}{(1-\sqrt{\rho})^2}(2L_0+3L_1\|\nabla f^i(\bar{x}_k)\|)(f^i(\bar{x}_k)-f^i_*)
\end{split}
\end{equation}
We have the following condition based on the step size $\alpha_k\leq \frac{1-\sqrt{\rho}}{\textnormal{max}_i\{\sqrt{24L_1\|\nabla f^i(\bar{x}_k)\|}\}}$ and $\alpha_k\leq \frac{1-\sqrt{\rho}}{\textnormal{max}\{20L_0,\sqrt{24L_0}\}}$:
\begin{equation}
    1-\frac{6\alpha_k^2}{(1-\sqrt{\rho})^2}(L_0+L_1\|\nabla f^i(\bar{x}_k)\|)\geq \frac{1}{2},
\end{equation}
which results in
\begin{equation}\label{eq_45}
\begin{split}
    &\frac{1}{2}\sum_{k=1}^K\frac{1}{N}\sum_{i=1}^N\|x^i_k-\bar{x}_k\|^2\leq \sum_{k=1}^{K}\frac{3\alpha^2_k(\sigma^2+\delta^2)}{(1-\sqrt{\rho})^2}\\&+\sum_{k=1}^K\frac{1}{N}\sum_{i=1}^N\frac{6\alpha^2_k}{(1-\sqrt{\rho})^2}(2L_0+3L_1\|\nabla f^i(\bar{x}_k)\|)(f^i(\bar{x}_k)-f^i_*).
\end{split}
\end{equation}
This completes the proof.
\end{proof}
% \begin{lemma}\label{lemma_7}
%         Let Assumptions~\ref{assumption_1} and~\ref{definition_1} hold. Let $\{\bar{x}_k\}$ be defined as in Lemma~\ref{lemma_2} with step size $\alpha_k=\textnormal{min}\{\frac{1}{2L_0},\frac{1}{3L_1\|\nabla f^i(x^i_k)\|}, \frac{1}{3L_1\|\nabla f^i(\bar{x}_k)\|}\}$. Suppose that $f^i:\mathbb{R}^d\to\mathbb{R}$ is twice continuously differentiable convex function. Then, the following relationship holds
%         \begin{equation}
%             \|\nabla f^i(\bar{x}_{k+1})\|\leq \|\nabla f^i(\bar{x}_k)\|.
%         \end{equation}
% \end{lemma}
% \begin{proof}
%     The proof follows similarly from the proof for Lemma B.4 in~\cite{lobanov2024linear} by apply the relationship into local agents and the ensemble average of $x^i_k$.
% \end{proof}
% \begin{lemma}\label{lemma_8}
    
% \end{lemma}
% \begin{lemma}\label{lemma_9}
    
% \end{lemma}
\begin{lemma}\label{lemma_10}
    Given $\bar{x}_k\in\mathbb{R}^d$ produced by Algorithm~\ref{alg:dsgd} such that $F(\bar{x}_k)\leq F(\bar{x}_0)$, for all $k\geq 0$. If $\|\bar{x}_{k+1}-\bar{x}_k\|\leq\textnormal{min}\{1/L_1,1\}$ for any $\bar{x}_{k+1}\in\mathbb{R}^d$, then we have $\|\nabla F(\bar{x}_{k+1})\|\leq 4(L_0/L_1+\|\nabla F(\bar{x}_{k})\|)$.
\end{lemma}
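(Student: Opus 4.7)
\textbf{Proof proposal for Lemma~\ref{lemma_10}.}
The plan is to derive the bound in one shot from the $(L_0,L_1)$-smoothness of the global objective $F$, together with the triangle inequality. The hypothesis $F(\bar{x}_k)\le F(\bar{x}_0)$ is not needed for this particular estimate; the essential ingredient is the step-size-style bound $\|\bar{x}_{k+1}-\bar{x}_k\|\le \min\{1/L_1,1\}$, which in particular guarantees $\|\bar{x}_{k+1}-\bar{x}_k\|\le 1/L_1$, so that Assumption~\ref{definition_1} can be invoked with $x=\bar{x}_k$ and $y=\bar{x}_{k+1}$.

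First I would recall the remark made in Section~\ref{prelim} that, because every local $f^i$ is $(L_0,L_1)$-smooth, the averaged function $F(x)=\tfrac{1}{N}\sum_{i=1}^N f^i(x)$ is also $(L_0,L_1)$-smooth (via Proposition~2.4 of Vankov et al.). Hence, writing Assumption~\ref{definition_1} at the level of $F$ yields
\[
\|\nabla F(\bar{x}_{k+1})-\nabla F(\bar{x}_k)\|\;\le\;\bigl(L_0+L_1\|\nabla F(\bar{x}_k)\|\bigr)\,\|\bar{x}_{k+1}-\bar{x}_k\|,
\]
which is admissible precisely because the hypothesis $\|\bar{x}_{k+1}-\bar{x}_k\|\le 1/L_1$ satisfies the domain restriction of the asymmetric $(L_0,L_1)$-smoothness inequality.

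Next, I would apply the triangle inequality $\|\nabla F(\bar{x}_{k+1})\|\le \|\nabla F(\bar{x}_k)\|+\|\nabla F(\bar{x}_{k+1})-\nabla F(\bar{x}_k)\|$, substitute the smoothness estimate above, and use $\|\bar{x}_{k+1}-\bar{x}_k\|\le 1/L_1$ to simplify $(L_0+L_1\|\nabla F(\bar{x}_k)\|)\|\bar{x}_{k+1}-\bar{x}_k\|\le L_0/L_1+\|\nabla F(\bar{x}_k)\|$. Combining gives
\[
\|\nabla F(\bar{x}_{k+1})\|\;\le\;2\|\nabla F(\bar{x}_k)\|+\tfrac{L_0}{L_1}\;\le\;2\bigl(L_0/L_1+\|\nabla F(\bar{x}_k)\|\bigr),
\]
and inflating the constant $2$ to $4$ yields exactly the claimed bound, leaving slack that the authors presumably exploit when the same argument is applied along the parametrized path $l(t)=\bar{x}_k+t(\bar{x}_{k+1}-\bar{x}_k)$ in the proof of Theorem~\ref{theorem_4}.

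There is essentially no hard step here: the argument is a direct one-shot application of $(L_0,L_1)$-smoothness. The only minor subtlety is justifying that $F$ itself inherits the asymmetric $(L_0,L_1)$-smoothness from the agents' $f^i$ (which the paper has already invoked), and keeping the two separate bounds $1/L_1$ and $1$ available in case the argument is later re-used at an intermediate point $l(t)$ where the stronger bound $\|l(t)-\bar{x}_k\|\le \|\bar{x}_{k+1}-\bar{x}_k\|$ automatically inherits both restrictions.
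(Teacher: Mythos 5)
Your proof is correct, but it takes a genuinely different (and more elementary) route than the paper. The paper's proof of Lemma~\ref{lemma_10} is a one-line deferral to Lemma~9 of Zhang et al.~\cite{zhang2019gradient}, which works from the Hessian-norm characterization $\|\nabla^2 F(x)\|\le L_0+L_1\|\nabla F(x)\|$ and runs a Gr\"onwall-type integration of the gradient norm along the segment from $\bar{x}_k$ to $\bar{x}_{k+1}$; the exponential $e^{L_1\|\bar{x}_{k+1}-\bar{x}_k\|}\le e$ produced by that integration is what forces the constant $4$ (and is also why the hypothesis $F(\bar{x}_k)\le F(\bar{x}_0)$ appears in the imported statement even though it is not used for this bound). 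You instead exploit the fact that the paper's Assumption~\ref{definition_1} is already the \emph{integrated}, asymmetric first-order form with the domain restriction $\|y-x\|\le 1/L_1$ built in, so a single application plus the triangle inequality gives the sharper constant $2$, which you then relax to $4$. This is a cleaner argument within the paper's own axioms and avoids re-deriving the Gr\"onwall bound. The one point you lean on is that $F$ inherits asymmetric $(L_0,L_1)$-smoothness with the \emph{same} constants: naively averaging the per-agent inequalities yields $\|\nabla F(y)-\nabla F(x)\|\le (L_0+L_1\cdot\frac{1}{N}\sum_i\|\nabla f^i(x)\|)\|y-x\|$, and $\frac{1}{N}\sum_i\|\nabla f^i(x)\|$ can exceed $\|\nabla F(x)\|$, so strictly speaking the inherited constant $L_1$ may need to be adjusted (or the bound phrased via $\max_i\|\nabla f^i(x)\|$). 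Since the paper itself asserts this inheritance via Proposition~2.4 of~\cite{vankov2024optimizing} and uses it in the proof of Theorem~\ref{theorem_4}, this is a shared caveat rather than a flaw specific to your argument; your observation that the slack from $2$ to $4$ is what gets consumed when the bound is re-applied at intermediate points $l(t)$ of the path is also accurate.
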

\begin{proof}
    The proof follows from the proof in Lemma 9 in~\cite{zhang2019gradient} and let $x=\bar{x}_k$ and $x^+=\bar{x}_{k+1}$.
\end{proof}
\begin{lemma}\label{lemma_11}
    Let $\{x^i_k\}$ be the iterates of Algorithm~\ref{alg:dsgd} such that $\bar{x}_k=\frac{1}{N}\sum_{i=1}^Nx^i_k$. Let Assumption~\ref{definition_1} hold. The following relationship holds:
    \begin{equation}
    \begin{split}
        \mathbb{E}[\alpha_k\langle\nabla F(\bar{x}_k),\frac{1}{N}\sum_{i=1}^Ng^i_k\rangle]&\geq \frac{1}{2}\mathbb{E}[\alpha_k\|\nabla F(\bar{x}_k)\|^2]+\frac{1}{2}\mathbb{E}[\alpha_k\|\frac{1}{N}\sum_{i=1}^N\nabla f^i(x^i_k)\|]\\&-\frac{1}{2}\mathbb{E}[\alpha_k\frac{1}{N}\sum_{i=1}^N(L_0+L_1\|\nabla f^i(\bar{x}_k)\|)^2\|x^i_k-\bar{x}_k\|^2].
    \end{split}
    \end{equation}
\end{lemma}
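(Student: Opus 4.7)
The plan is to exploit the unbiasedness of the stochastic gradient together with the polarization identity, and then control the resulting mismatch term via the asymmetric $(L_0,L_1)$-smoothness in Assumption~\ref{definition_1}. Let $\mathcal{F}_k$ denote the natural filtration generated by the iterates up to step $k$, so that $\alpha_k$, $\bar{x}_k$, $\{x^i_k\}$ and hence $\nabla F(\bar{x}_k)$ are all $\mathcal{F}_k$-measurable. By Assumption~\ref{assum_3} the stochastic gradients satisfy $\mathbb{E}[g^i_k\mid\mathcal{F}_k]=\nabla f^i(x^i_k)$. The tower property therefore collapses the left-hand side to
$$\mathbb{E}\bigl[\alpha_k\langle\nabla F(\bar{x}_k),\tfrac{1}{N}\sum_{i=1}^N g^i_k\rangle\bigr]=\mathbb{E}\bigl[\alpha_k\langle\nabla F(\bar{x}_k),\tfrac{1}{N}\sum_{i=1}^N\nabla f^i(x^i_k)\rangle\bigr].$$

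Next I would apply the polarization identity $\langle a,b\rangle=\tfrac{1}{2}\|a\|^2+\tfrac{1}{2}\|b\|^2-\tfrac{1}{2}\|a-b\|^2$ with $a=\nabla F(\bar{x}_k)=\tfrac{1}{N}\sum_i\nabla f^i(\bar{x}_k)$ and $b=\tfrac{1}{N}\sum_i\nabla f^i(x^i_k)$. This already produces the first two positive terms on the right-hand side of the claim, and it remains to upper bound the residual $\|a-b\|^2=\bigl\|\tfrac{1}{N}\sum_{i=1}^N(\nabla f^i(\bar{x}_k)-\nabla f^i(x^i_k))\bigr\|^2$. Convexity of $\|\cdot\|^2$ (Jensen's inequality) yields
$$\Bigl\|\tfrac{1}{N}\sum_{i=1}^N(\nabla f^i(\bar{x}_k)-\nabla f^i(x^i_k))\Bigr\|^2\le\tfrac{1}{N}\sum_{i=1}^N\|\nabla f^i(\bar{x}_k)-\nabla f^i(x^i_k)\|^2.$$

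I would then invoke Assumption~\ref{definition_1} with $x=\bar{x}_k$ and $y=x^i_k$, giving $\|\nabla f^i(x^i_k)-\nabla f^i(\bar{x}_k)\|\le(L_0+L_1\|\nabla f^i(\bar{x}_k)\|)\|x^i_k-\bar{x}_k\|$. Squaring, averaging over $i$, multiplying by $\tfrac{1}{2}\alpha_k$, and taking expectation produces exactly the negative term in the claim, completing the inequality.

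The main obstacle is the applicability of Assumption~\ref{definition_1}: the asymmetric relaxed-smoothness bound requires $\|x^i_k-\bar{x}_k\|\le 1/L_1$, which is not automatic. This must be guaranteed through the step-size prescription in Algorithm~\ref{alg:dsgd} in conjunction with the consensus-error control developed in Lemma~\ref{lemma_8}; in particular the clip $\alpha_k\le 1/\max_i\|\nabla f^i(\bar{x}_k)\|$ together with the contraction of $(\mathbf{I}-\mathbf{Q})$ keeps the deviation $\|x^i_k-\bar{x}_k\|$ within the admissible neighborhood. Once this admissibility is in place, the rest of the argument is a clean combination of unbiasedness, polarization, and Jensen.
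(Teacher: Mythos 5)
Your proposal is correct and follows essentially the same route as the paper's own proof: unbiasedness to replace $g^i_k$ by $\nabla f^i(x^i_k)$, the polarization identity, and then the $(L_0,L_1)$-smoothness bound on $\|\nabla f^i(x^i_k)-\nabla f^i(\bar{x}_k)\|$ averaged via Jensen. You are in fact slightly more careful than the paper, which leaves both the Jensen step and the admissibility condition $\|x^i_k-\bar{x}_k\|\leq 1/L_1$ implicit.
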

\begin{proof}
    We apply the basic equality $\langle a,b\rangle=\frac{1}{2}[\|a\|^2+\|b\|^2-\|a-b\|^2]$ such that we obtain the following relationship
    \begin{equation}
    \begin{split}
        \mathbb{E}[\langle\nabla F(\bar{x}_k),\frac{1}{N}\sum_{i=1}^Ng^i_k\rangle]&=\mathbb{E}[\langle\nabla F(\bar{x}_k),\frac{1}{N}\sum_{i=1}^N\nabla f^i(x^i_k)\rangle]\\&=
        \frac{1}{2}\mathbb{E}[\|\nabla F(\bar{x}_k)\|^2+\|\frac{1}{N}\sum_{i=1}^N\nabla f^i(x^i_k)\|^2]-\|\nabla F(\bar{x}_k)-\frac{1}{N}\sum_{i=1}^N\nabla f^i(x^i_k)\|^2
    \end{split}
    \end{equation}
Using the fact that $\|\nabla F(\bar{x}_k)-\frac{1}{N}\sum_{i=1}^N\nabla f^i(x^i_k)\|^2=\|\frac{1}{N}\sum_{i=1}^N\nabla f^i(\bar{x}_k)-\frac{1}{N}\sum_{i=1}^N\nabla f^i(x^i_k)\|^2$, applying the $(L_0,L_1)$-smoothness assumption, and multiplying both sides with $\alpha_k$ completes the proof.
\end{proof}
\begin{lemma}\label{lemma_12}
    Let $\{x^i_k\}$ be the iterates of Algorithm~\ref{alg:dsgd} such that $\bar{x}_k=\frac{1}{N}\sum_{i=1}^Nx^i_k$. Let Assumption~\ref{definition_1} hold. Then we have the following relationship:
    \begin{equation}
    \begin{split}
        \frac{5L_0+4L_1\|\nabla F(\bar{x}_k)\|}{2}\mathbb{E}[\alpha_k^2\|\frac{1}{N}\sum_{i=1}^Ng^i_k\|^2]&\leq\frac{(5L_0+2L_1\sigma)\alpha_k^2\sigma^2}{N}+\frac{1}{4}\mathbb{E}[\|\nabla F(\bar{x}_k)\|^2\alpha_k] \\&+ (5L_0+4L_1\|\nabla F(\bar{x}_k)\|)\mathbb{E}[\alpha_k^2\|\frac{1}{N}\sum_{i=1}^N\nabla f^i(x^i_k)\|^2]
    \end{split}
    \end{equation}
\end{lemma}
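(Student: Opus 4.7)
\textbf{Proof plan for Lemma~\ref{lemma_12}.} The plan is to reduce the stochastic quantity $\|\tfrac{1}{N}\sum_i g^i_k\|^2$ to its deterministic counterpart plus a variance term, then carefully split the resulting noise contribution so the $L_1\|\nabla F(\bar{x}_k)\|$ factor gets absorbed partly into the $\tfrac14\mathbb{E}[\alpha_k\|\nabla F(\bar{x}_k)\|^2]$ slack on the right-hand side and partly into the $2L_1\sigma\,\alpha_k^2\sigma^2/N$ noise slot. Since $\alpha_k$ is $\mathcal{F}_k$-measurable (it depends only on $\bar{x}_k$ and local gradients at iteration $k$, not on the fresh samples $\xi_i$), it may be treated as a constant inside the conditional expectation.

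First, I would condition on $\mathcal{F}_k$ and use the unbiasedness $\mathbb{E}[g^i_k\mid \mathcal{F}_k]=\nabla f^i(x^i_k)$ together with independence of the local noises $g^i_k-\nabla f^i(x^i_k)$ across $i$ and Assumption~\ref{assum_3}(a) to write
\begin{equation*}
\mathbb{E}\Big[\big\|\tfrac{1}{N}\sum_{i=1}^{N}g^i_k\big\|^2\,\Big|\,\mathcal{F}_k\Big]
\leq \big\|\tfrac{1}{N}\sum_{i=1}^{N}\nabla f^i(x^i_k)\big\|^2+\tfrac{\sigma^2}{N}.
\end{equation*}
Multiplying by $\alpha_k^2$ and then by the prefactor $\tfrac{5L_0+4L_1\|\nabla F(\bar{x}_k)\|}{2}$, and taking expectations, yields two summands on the right. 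The gradient summand is immediately absorbed since $\tfrac{5L_0+4L_1\|\nabla F(\bar{x}_k)\|}{2}\leq 5L_0+4L_1\|\nabla F(\bar{x}_k)\|$, which matches the corresponding coefficient in the claim.

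The remaining noise summand is $\tfrac{(5L_0+4L_1\|\nabla F(\bar{x}_k)\|)\,\alpha_k^2\sigma^2}{2N}$, which I split into its $L_0$-part and its $L_1$-part. The $L_0$-part $\tfrac{5L_0\,\alpha_k^2\sigma^2}{2N}$ is bounded trivially by $\tfrac{5L_0\,\alpha_k^2\sigma^2}{N}$, contributing to the stated constant. For the $L_1$-part $\tfrac{2L_1\|\nabla F(\bar{x}_k)\|\,\alpha_k^2\sigma^2}{N}$, I apply Young's inequality in the form $2uv\leq u^2+v^2$ with $u=\sqrt{\alpha_k}\,\|\nabla F(\bar{x}_k)\|/2$ and $v=2L_1\alpha_k^{3/2}\sigma^2/N$, which produces
\begin{equation*}
\tfrac{2L_1\|\nabla F(\bar{x}_k)\|\,\alpha_k^2\sigma^2}{N}\;\leq\;\tfrac{1}{4}\alpha_k\|\nabla F(\bar{x}_k)\|^2+\tfrac{4L_1^2\alpha_k^3\sigma^4}{N^2}.
\end{equation*}
The first piece is exactly the $\tfrac14\mathbb{E}[\alpha_k\|\nabla F(\bar{x}_k)\|^2]$ slot in the claim once I take expectations, and the residual $\tfrac{4L_1^2\alpha_k^3\sigma^4}{N^2}$ is collapsed back into $\tfrac{2L_1\sigma\,\alpha_k^2\sigma^2}{N}$ by invoking $\alpha_k L_1\sigma\leq 1/36$, which is enforced by the choice $\hat\alpha\leq \tfrac{1}{36L_1\sigma}$ in Theorem~\ref{theorem_4}; this guarantees $\tfrac{2L_1\alpha_k\sigma}{N}\leq 1$ for every $N\geq 1$. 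Summing the three pieces delivers the stated inequality.

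\textbf{Main obstacle.} The delicate step is the Young's-inequality splitting: the weight must be chosen so that the gradient-squared byproduct lands \emph{exactly} at $\tfrac14\alpha_k\|\nabla F(\bar{x}_k)\|^2$ to mesh with the descent inequality in Theorem~\ref{theorem_4}, while simultaneously the leftover $\alpha_k^3\sigma^4/N^2$ term can be recompressed to $\alpha_k^2\sigma^3/N$ without spawning any new constants; this is only possible because of the explicit budget $\alpha_k L_1\sigma\leq 1/36$ baked into $\hat\alpha$, and it dictates the appearance of the precise constant $2L_1\sigma$ in the noise coefficient on the right-hand side.
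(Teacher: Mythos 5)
Your proof is correct, but it takes a genuinely different route from the paper's. The paper first applies $\|a+b\|^2\leq 2\|a\|^2+2\|b\|^2$ to split $\frac{1}{N}\sum_i g^i_k$ into noise plus true gradient (which is where the factor $(5L_0+4L_1\|\nabla F(\bar{x}_k)\|)$, i.e.\ twice the prefactor, on the gradient term comes from), and then disposes of the noise term by a case analysis on whether $\|\nabla f^i(\bar{x}_k)\|\geq L_0/L_1+\sigma$, deferring the details to Lemma~13 of Zhang et al. You instead use the exact bias--variance decomposition under conditioning on $\mathcal{F}_k$ (unbiasedness kills the cross term, independence across agents gives the $\sigma^2/N$ variance), which yields a noise coefficient of only half the prefactor, and then you absorb the $L_1\|\nabla F(\bar{x}_k)\|$ piece of the noise via Young's inequality tuned to produce exactly the $\frac{1}{4}\mathbb{E}[\alpha_k\|\nabla F(\bar{x}_k)\|^2]$ slack, recompressing the residual $\alpha_k^3$ term using the budget $\alpha_kL_1\sigma\leq 1/36$. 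Your version is self-contained and avoids the case split entirely, at the cost of invoking two hypotheses that are not stated in the lemma itself: independence of the stochastic gradients across agents (which the paper also uses implicitly, e.g.\ in its Eq.~47 bound $\alpha_k^2\sigma^2/N$), and the step-size condition $\hat\alpha\leq\frac{1}{36L_1\sigma}$ from Theorem~\ref{theorem_4}. Both are available in the only context where the lemma is applied, but it would be cleaner to record them as explicit hypotheses; note that the paper's own route (via the external case-split lemma) also relies on a step-size restriction, so this is not a substantive disadvantage.
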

\begin{proof}
    As $\mathbb{E}[\alpha_k^2\|\frac{1}{N}\sum_{i=1}^Ng^i_k\|^2]=\mathbb{E}[\alpha_k^2\|\frac{1}{N}\sum_{i=1}^N(g^i_k-\nabla f^i(x^i_k)+\nabla f^i(x^i_k))\|^2]$, we have that
    \begin{equation}
        \mathbb{E}[\alpha_k^2\|\frac{1}{N}\sum_{i=1}^Ng^i_k\|^2]\leq 2\mathbb{E}[\alpha^2_k\|\frac{1}{N}\sum_{i=1}^N(g^i_k-\nabla f^i(x^i_k))\|^2]+2\mathbb{E}[\alpha^2_k\|\frac{1}{N}\sum_{i=1}^N\nabla f^i(x^i_k)\|^2]
    \end{equation}
    We next bound the term $\mathbb{E}[\alpha^2_k\|\frac{1}{N}\sum_{i=1}^N(g^i_k-\nabla f^i(x^i_k))\|^2]$.
    We investigate two scenarios, including $\|\nabla f^i(\bar{x}_k)\|\geq L_0/L_1 + \sigma$ and $\|\nabla f^i(\bar{x}_k)\|< L_0/L_1 + \sigma$. Following the similar proof from Lemma 13 in~\cite{zhang2019gradient} yields the desirable results. 
\end{proof}

\end{document}